\newtheorem{definition}{Definition}
\newtheorem{proposition}[definition]{Proposition}
\newtheorem{corollary}[definition]{Corollary}
\newtheorem{lemma}[definition]{Lemma}
\newtheorem{theorem}[definition]{Theorem}
\newtheorem{remark}[definition]{Remark}
\newcommand{\R}{\mathbb{R}}
\newsavebox{\fmbox}
\begin{document}

\title{Modelling the inflammatory process in atherosclerosis: a nonlinear renewal equation}

\author{Nicolas Meunier \and Nicolas Muller \thanks{MAP5, CNRS UMR 8145, Universit\'{e} Paris Descartes, 45 rue des Saints  P\`{e}res 75006 Paris, France. ({\tt nicolas.meunier@parisdescartes.fr, nicolas.muller@parisdescartes.fr})}. }

\maketitle

\begin{abstract}
We present here a population structured model to describe the dynamics of macrophage cells. The model involves the interactions between modified LDL, monocytes/macrophages, cytokines and foam cells. The key assumption is that the individual macrophage dynamics depends on the amount of lipoproteins it has internalized. The obtained renewal equation is coupled with an ODE describing the lipoprotein dynamics.  We first prove global existence and uniqueness for the nonlinear and nonlocal system. We then study long time asymptotics in a particular case describing silent plaques which undergo periodic rupture and repair. Finally we study long time asymptotics for the nonlinear renewal equation obtained when considering the steady state of the ODE. and we prove that .... 
%This model suggests that there is an initial inflammatory phase associated with atherosclerotic lesion development and a longer, quasi-static process of plaque development inside the arterial wall that follows the initial transient. We will show results that show how different low density lipoprotein (LDL) concentrations in the blood stream and different immune responses can affect the development of a plaque. Through numerical bifurcation analysis, we show the existence of a fold bifurcation when the flux of LDL from the blood is sufficiently high. By analysing the model presented in this paper, we gain a greater insight into this inflammatory response qualitatively and quantitatively.
\newline\textbf{Keywords:}
Renewal equation, long-time asymptotic, existence theory, structured populations.
\end{abstract}

%\begin{AMS}
%35B60; 35B44; 35Q92; 92C17; 92B05.
%\end{AMS}

%\tableofcontents

\section{Introduction}
\noindent
 
Atherosclerosis is the leading cause of death in industrialized societies since it is the primary cause of heart attack (acute myocardial infarction) and stroke (cerebrovascular accident). It is now accepted that atherosclerosis is a chronic inflammatory disease which starts within the intima, the innermost layer of an artery. It is driven by the accumulation of macrophage cells within the intima and promoted by modified low density lipoprotein (LDL) particles \cite{Libby, Lusis, Tabas}.

The early stages of atherosclerosis are non-symptomatic. Atherosclerotic lesions, or plaques, can form in the artery wall as early as young  and continue to evolve and grow throughout adult life. During this evolution and growth, symptoms may begin to occur. Indeed, plaques stiffen the arterial wall, and when covered by a thin fibrous cap they may become unstable and rupture, causing occlusions to blood flow to vital organs which may result in heart attack and stroke.

Plaque formation, growth and rupture depend on many nonlinear biophysical processes that occur on different timescales and in different locations. In particular, some processes take place in lining of the blood vessel and others in the vessel wall. 

Over the past decades, the vulnerable plaque, defined as a lesion loaded with lipid, macrophage rich, covered by a thin fibrous cap, has been considered poised to rupture. Consequently the thin-capped fibroatheroma lesion has become a target for imaging, possible intervention, model attempts in animals, and much discussion. The concept of vulnerable plaque has proven highly useful to guide research and thinking regarding the plaque risk factor.

In this work we propose a structured population model to describe the dynamics of macrophages in the intima. Our key assumption is that macrophage inflammatory activity depends on the amount of internalized LDL.

%The mathematical model we present here is based on the assumption that immune cell (macrophage) inflammatory state is characterized by its "size" $a$, a one-dimensional quantity which amounts for the LDL the cell has internalized. 

%More precisely,  we consider the following nonlinear first order partial differential equation to describe the macrophage density dynamics: 
%\begin{equation}\label{eq:model_intro}
%\partial_t M(t, a) + C (t)\, \partial_ a\big(\mathcal{V}(a) \, M(t, a)\big) + \mu (a) \, M(t, a) = 0\, , \quad t>0\, , a>0\, ,
%\end{equation}
%completed by a nonlinear and nonlocal boundary term:
%\begin{equation}\label{eq:bord_intro}
%M(t,0) =f  \left(C(t) ,\int_{0}^{+ \infty } B(a) \, M(t,a) \, da \right),
%\end{equation}
%where $C(t)$, the concentration of modified LDL in the intima, satisfies an ordinary differential equation: \begin{equation}\label{eq:Cprime_intro}
%C'(t) = R(C(t)) - C(t)\, \int_{0}^{+ \infty } \mathcal{V}(a) \, M(t,a) \, da.
%\end{equation}

Related models featuring structured population balances have been widely used to model biological population dynamics, for instance,  stability of structured populations was investigated in \cite{Farkas2007119, Farkas2010}, existence of solutions of continuous diffusive coagulation– fragmentation models was studied in \cite{Amann2005159} e.g., similar models without diffusion were discussed in \cite{Giri2012,Rudnicki2010,Morale2004}, existence of solutions using a fixed-point approach  was investigated in \cite{Farkas2012,MichelPerthame, Perthame_transport}, and finally size distribution and long-time asymptotics were studied in \cite{Sizedistribution,Gabriel2012}.

Using a fixed-point approach we prove existence and uniqueness of the solution to system \eqref{Pbcouple} defined below. This system consists in a nonlinear and nonlocal renewal equation coupled to an ordinary differential equation.

%describing a "young" or a slow evolutive lesion we obtain a Lokta-Volterra system  which exibits periodic solution. In the context of atherosclerosis, such a periodic behaviour is in agreement with biological observations, namely "young" or a slow evolutive lesions undergo erosion and repair    where the LDL infiltration we also indicate how more general situations (size-dependent growth, temperature dependence of saturation constant, and agglomeration) can be integrated in the setting. Our proof expands on Gurtin and MacCamy [19] and Calsina [20] (see also [21, 22]). The full model for which existence of solutions is proved comprises Equations (11)–(16), which we derive in the following sections.

The structure of the paper is as follows. In Sections 2, the model is derived from population and mass balances. Existence and uniqueness are proved in Section 3. Long time asymptotics is discussed in Section 4. Finally, a simplified high inflammatory model is presented in Section 5.

\begin{figure}
\begin{center}
\includegraphics[width=.4\linewidth]{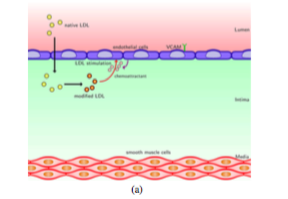}
\includegraphics[width=.37\linewidth]{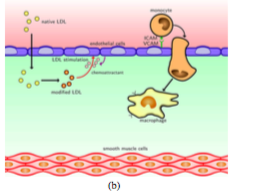}
\includegraphics[width=.4\linewidth]{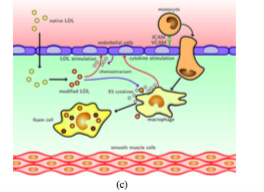}
 \end{center} 
\caption{(a) When endothelium is injuried, LDL from the lumen infiltrates the intima. LDL inside the intima is susceptible to modification. The presence of modified LDL at the endothelium triggers the production of chemoattractant and stimulating factors. This aggravation of the endothelium triggers an expression on the surface of the endothelial wall of adhesion molecules such as VCAM-1 and ICAM-1. (b) Monocytes from the blood stream adhere to the endothelial wall and move into the intima in response to the chemoattractant present in the intima. Once inside the intima, macrophage colony stimulating factor (M-CSF) stimulates the monocytes to differentiate into macrophages and these macrophages display scavenger receptors on their surface. (c) Scavenger expression allows macrophages to consume the modified LDL inside the intima. Macrophages secrete chemoattractants such as MCP-1 and endothelial stimulating cytokines (e.g. TNFα). As they consume modified LDL, macrophages become foam cells and cease to consume modified LDL}\label{Leoni}
\end{figure}

%Our purpose in this paper is to study several aspects of this model: global existence of solution and under appropriate assumptions description of the long-time profile.  We also perform numerical simulations to illustrate the theoritical approach. 

%The plan of this article is the following. In the second section, we detail the construction of the model. \textbf{A refaire} In the third section, we prove the convergence towards the steady state and in the last section, we study the dependence between size of the necrotic core and the apoptotic rate. 

\section{Model description}
\noindent

In a healthy artery the endothelium, which is a layer of cells, covers the inside of the vessel wall and contributes to the integrity of the arterial wall. In areas of low shear stress, the endothelium can become damaged. Once the endothelium has been injured, LDL, which is present in the blood stream, penetrates the arterial wall and enters the intima. Inside the intima free radicals, which are produced when the endothelial wall fails, may modify LDL. The failure of the endothelium and the presence of modified LDL trigger the expression of adhesion molecules on the surface of the endothelium enabling monocytes from the blood stream to attach themselves to the endothelial wall. Then monocytes migrate into the intima in response to a chemoattractive gradient established by cytokines, \cite{Grec}. 
%The phenomenon of monocyte transmigration from the blood flow into the intima is called monocyte extravasation. 
Once in the arterial wall, monocytes differentiate into macrophages.  Macrophages that exhibit scavenger receptors recognise modified LDL in the intima and internalise the lipid content. Stimulated by their consumption of modified LDL, macrophages secrete chemoattractants and endothelial-stimulating cytokines  that both promote further immigration of macrophages, Fig 1. Lipid-laden macrophages become foam cells, that is, large cells which are filled with lipid droplets that give them a foamy appearance. 
%Foam cells collectively form a fatty streak inside the intima.

Over weeks, months, and even years, the previously described process continues and may even amplify. Foam cells secrete additional molecules that further promote lipoprotein retention and extravasation of more monocytes. After apoptosis, foam cells form the lipid-rich necrotic core, and contribute to the volume growth of the intima eventually restricting the artery lumen. Some of these lesions, fed by the failure of inflammation resolution, undergo a process of arterial wall breakdown. These latter types of lesions are referred to as vulnerable plaques. Plaque erosion and then plaque rupture can expose pro-coagulant and pro-thrombotic contents in the intima to coagulation factors and platelets in the bloodstream. In the worse case scenario, an occlusive thrombus forms, leading to acute oxygen and nutrient deprivation of distal tissues fed by the artery. When these events occur in the coronary arteries the result is unstable angina, myocardial infarction, or sudden cardiac death and when it occurs in the carotid arteries the result is cerebral stroke.

A major goal is to predict plaque evolution. There are currently many models for early atherosclerosis description, see e.g. \cite{Genieys07,Genieys09,Genieys12,CEMRACS,CEMRACS2,Chalmers2015}, but no comprehensive model for vulnerable plaque description. Models assume homogeneity of cells within population classes. However, macrophage cells differ in their physiological and inflammatory characteristics. Furthermore it is now accepted that a key event in atherosclerosis is a deleterious inflammatory response to subendothelial lipoproteins \cite{Tabas}. A critical aspect of this response is failure of inflammation resolution, which normally consists in inflammatory cell influx regulation together with effective clearance of apoptotic cells and promotion of inflammatory cell exit. Defects in one of these processes promote the progression of atherosclerotic lesions into vulnerable plaques. Since monocyte influx, evolution into first macrophage and then foam cell, and foam cell apoptosis depend on LDL concentration in the intima, it is natural to use a structured population modelling to account for these differences. Furthermore, the knowledge of both inflammatory activity and lipid content is necessary to predict clinical events. For example, in the case of low inflammatory plaques with low lipid content, rupture is a common phenomenon. But in this case the plaque rupture is silent as it is immediately followed by repair and it does not cause any symptom, \cite{Virmani2004}. 
%It is useful to describe LDL concentration dynamics.

Here, we group and structure the
% monocyte -- macrophage -- foam cell dynamics according to the amount of lipids the cell has consumed. This leads to a renewal one-dimensional PDE.  The 
macrophage population, denoted by $M(t,a)$, into a single species by the amount of internalized lipoproteins $a\ge 0$: $M(t,0)$ represents the incoming monocyte concentration, differentiation into macrophages occurs for $a\in (0, A)$, evolution into foam cells for $a\in (A, A_1)$ and apoptosis if $a>A_1$.

Macrophages internalize lipoproteins, this ability increases with the density of lipoproteins already present in the intima, $C(t)$, and it also depends on the quantity of LDL a macrophage has already internalized, $a$. We assume the speed of internalization to be given by $C(t) \mathcal{V}(a)$ with $\mathcal{V}$ a positive function. Furthermore, individuals of the macrophage population die with a rate $\mu(a)$ depending only on the amount of internalized LDL. The dynamics of this population is given by the nonlinear and nonlocal transport equation:
\begin{equation}
\partial_t M(t, a) + C (t)\, \partial_ a\big(\mathcal{V}(a) \, M(t, a)\big) + \mu (a) \, M(t, a) = 0,
\end{equation}
where $C(t)$ is the concentration of modified LDL in the intima.

Monocyte transmigration from the blood flow into the intima starts with penetration, retention and modification of lipoproteins in the intima \cite{Lusis}. Moreover, stimulated by their consumption of modified LDL, macrophages secrete chemoattractants, \cite{Libby, Malat, Ross1986, Dirksen1998}. 
Consequently, we describe monocyte extravasation by the following boundary term
\begin{equation}\label{eq:bord}
M(t,0) =f  \left(C(t) ,\int_{0}^{+ \infty } B(a) \, M(t,a) \, da \right).
\end{equation}
A particular attention will be paid to low and high inflammatory states:
\begin{itemize}
\item Low inflammatory state in which case monocyte recruitment depends linearly either on macrophage concentration in the intima:  
\begin{equation}\label{ex_f_2}
M(t,0)=\int_0^{+\infty} B(a)M(t,a) da\, ,
\end{equation}
or on modified LDL concentration
\begin{equation}\label{ex_f_3}
M(t,0)=\alpha C(t)\, .
\end{equation} 
\item  Self-reinforced inflammatory state in which monocyte incoming depends on both modified LDL and macrophage concentrations in the intima:
\begin{equation}\label{ex_f_1}
M(t,0) =\sigma _m \, C(t) \left( 1+\int_{0}^{+ \infty }\mathcal{V}(a)\, M(t,a) \, da \right),
\end{equation}
where $\sigma_m$ denotes the concentration of monocytes at the endothelium.  In this case we assume that monocyte recruitment saturates as modified LDL concentration increases and chemoattractant production by macrophages is proportional to macrophage consumption of modified LDL, i.e. $B=\mathcal{V}$, which reads 

\end{itemize}

Let us turn on modified LDL dynamics in the intima that we describe by an ordinary differential equation 
\begin{equation}\label{eq:Cprime}
C'(t) = R(C(t)) - C(t)\, \int_{0}^{+ \infty } \mathcal{V}(a) \, M(t,a) \, da.
\end{equation}
 The last term in the right-hand side corresponds to lipid consumption by macrophage cells. The term $R(C(t))$ corresponds to LDL influx, modification and retention in the intima. It is known that LDL influx is closely linked to the local blood flow dynamics, see \cite{CEMRACS2}. In this work, we will focus on the the following two cases for the LDL flux: 
\begin{itemize}
\item Low endothelial injury corresponding to region with normal wall shear stress and low LDL concentration in the intima: 
%endothelial injury is due to LDL influx and LDL modification in the intima enhances LDL influx:
\begin{equation}\label{ex_R_1}
R(x) = \beta x ,
\end{equation} 
\item  High endothelial injury corresponding to region with recirculation or region with normal wall shear stress and high LDL concentration in the intima: 
%endothelial injury depends on wall shear stress and LDL influx is given by the Kedem-Katchalsky equations \cite{CEMRACS2} which can be modelled as follows:
\begin{equation}\label{ex_R}
R(x) = \gamma - \beta x.
\end{equation}   
\end{itemize}

Finally the complete coupled model is:
\begin{equation}\label{Pbcouple}
\left\{
\begin{array}{l}
\partial_t M(t, a) +  C (t) \, \partial_a (\mathcal{V}(a) \, M(t, a)) + \mu (a) \, M(t, a) = 0\, , \quad t \ge 0\, ,a> 0\, , \\
M(t,0) = f \left(C(t),\int_{0}^{+ \infty } B(a) \, M(t,a) \, da \right)\, , \quad t \ge 0\, ,\\
C'(t) = R(C(t)) - C (t) \, \int_{0}^{+ \infty } \mathcal{V}(a) \, M(t,a) \, da \, ,\quad t \ge 0\, , 
\end{array}
\right.
\end{equation}
with $f$ given by \eqref{ex_f_2} or \eqref{ex_f_3} or \eqref{ex_f_1} and $R$ given by \eqref{ex_R_1} or \eqref{ex_R} completed with an initial condition 
\begin{equation}\label{CI}
\left\{
\begin{array}{l}
M(0,.)=M_0 \in W^{1,1}(\mathbb{R}_+,\mathbb{R}_+), \\
C(0)=C_0>0.
\end{array}
\right.
\end{equation}

On the first hand, the link between inflammatory state of the lesion and $M$ is the following. The more the distribution of $M$ is concentrated for large values of $a$ the more inflammatory the plaque is and the more vulnerable it is. On the other hand the link between the model and clinical events will be as follows: the more vulnerable the plaque is and the larger the lipid amount the greater is the risk to undergo a clinical event will be.

\section{Existence and uniqueness of the solution to system \eqref{Pbcouple} }
\noindent
In this part we are interested in proving existence and uniqueness results of the solution to problem \eqref{Pbcouple}, under the following assumptions:  the functions $\mu, \, B, \, f $ and  $\mathcal{V}$ are positive and satisfy
\begin{equation}\label{hyp:M}
\left\{
\begin{array}{l}
\mu, B  \in L^{\infty}(\mathbb{R}_+), \,  ||B||_{\infty} >0 \textrm{ and } B \textrm{ is Lipschitz},\\
\mathcal{V},\, f\, : \mathbb{R}^2 \rightarrow \mathbb{R}_+\textrm{ are Lipschitz functions with } k_f>0 \textrm{ and } \mathcal{V}(0)>0,
\end{array}
\right.
\end{equation} 
%%%%%%%%%%%%%%%%%%%%%%%%%%%%%%%%%%%%%%%%%%%%%%%%%%%%%%%%%%%%%%%%%%%%%%%%%%%%%%%%%%%%%%%%%%%%%%%%%%%%%%%%%%%%%%%%%%
%%%%%%%%%%%%%%%%%%%%%%%%%%%%%%%%%%%%%%%%%%%%%%%%%%%%%%%%%%%%%%%%%%%%%%%%%%%%%%%%%%%%%%%%%%%%%%%%%%%%%%%%%%%%%%%%%%
%%%%%%%%%%%%%%%%%%%%%%%%%%%%%%%%%%%%%%%%%%%%%%%%%%%%%%%%%%%%%%%%%%%%%%%%%%%%%%%%%%%%%%%%%%%%%%%%%%%%%%%%%%%%%%%%%%
%\textbf{****Modifier ce qui suit c'est inutile de compliquer il faut traiter le cas affine cela suffit non ?****} 
the function $R : \mathbb{R} \rightarrow \mathbb{R}$ is locally Lipschitz and satisfies 
\begin{equation}\label{hyp:R}
\left\{
\begin{array}{l}
i) \, X'(t)=R(X(t)) \textrm{ with } X(0)=C_0 \textrm{ is globally bounded by }\\
K_{C_0,k} := \min\left( C_0, k\right)> 0 \textrm{ where } k:=\min \{x>0 \textrm{ s.t. } R(x)=0\},\\
ii) \, \exists S : \mathbb{R} \rightarrow \mathbb{R} \textrm{ locally Lipschitz such that } \forall x\in \mathbb{R}, \, R(x) \geq x \, S(x).
\end{array}
\right.
\end{equation} 
%%%%%%%%%%%%%%%%%%%%%%%%%%%%%%%%%%%%%%%%%%%%%%%%%%%%%%%%%%%%%%%%%%%%%%%%%%%%%%%%%%%%%%%%%%%%%%%%%%%%%%%%%%%%%%%%%%
%%%%%%%%%%%%%%%%%%%%%%%%%%%%%%%%%%%%%%%%%%%%%%%%%%%%%%%%%%%%%%%%%%%%%%%%%%%%%%%%%%%%%%%%%%%%%%%%%%%%%%%%%%%%%%%%%%
%%%%%%%%%%%%%%%%%%%%%%%%%%%%%%%%%%%%%%%%%%%%%%%%%%%%%%%%%%%%%%%%%%%%%%%%%%%%%%%%%%%%%%%%%%%%%%%%%%%%%%%%%%%%%%%%%%

In the case where $\mathcal V $ is bounded, the existence and uniqueness results are:

\begin{theorem}\label{th:global}
Assume that the functions $\mu, \, B,\, f, \, \mathcal{V}$ and $R$ satisfy \eqref{hyp:M}, \eqref{hyp:R} and that $\mathcal{V}\in L^\infty(\R_+)$. Then, there exists a unique solution $(M,C)$ in $C^0 \left(\mathbb{R}_+;L^1 (\mathbb{R}_+) \right) \times C^0 \left(\mathbb{R}_+ \right)$ to system \eqref{Pbcouple} with initial condition \eqref{CI}.
\end{theorem}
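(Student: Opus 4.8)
The plan is to treat \eqref{Pbcouple} by a fixed-point (contraction) argument: first establish local existence and uniqueness on a short interval $[0,T]$, then extend to all of $\mathbb{R}_+$ using a priori bounds. Concretely, I would work on the complete metric space $X_T := \{C \in C^0([0,T]) : 0 \le C(t) \le K_{C_0,k}\}$ and construct a map $\Phi : X_T \to X_T$ whose fixed point is the sought $C$, with $M$ reconstructed from $C$ at each step. The key point that makes the decoupling work is that, once $C$ is frozen, the first equation becomes linear in $M$.

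First, for fixed $C \in X_T$, the transport equation together with the boundary condition \eqref{eq:bord} is a nonlocal, linear-in-$M$ renewal equation, which I would solve by the method of characteristics along the flow $\dot a = C(t)\mathcal{V}(a)$. Writing the equation in characteristic (Duhamel) form, the solution $M(t,a)$ is expressed in terms of the initial datum $M_0$ and of the still-unknown boundary trace $N(t) := M(t,0)$. Substituting this representation into $N(t) = f\big(C(t), \int_0^\infty B(a)M(t,a)\,da\big)$ turns the boundary condition into a Volterra-type fixed-point equation for $N$ alone; since $f$ and $B$ are Lipschitz and bounded and $\mathcal{V}\in L^\infty$, the associated operator is a contraction on $C^0([0,T])$ for $T$ small, giving a unique $N$ and hence a unique $M =: M[C] \in C^0([0,T];L^1(\mathbb{R}_+))$. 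Nonnegativity of $M$ follows from positivity of $M_0$, of $f$, and of the characteristic semigroup.

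Second, I would insert $M[C]$ into the ODE, defining $\Phi(C)$ as the solution of $C'(t) = R(C(t)) - C(t)\int_0^\infty \mathcal{V}(a) M[C](t,a)\,da$ with $C(0)=C_0$. The two estimates making $\Phi$ a contraction are (i) a Lipschitz dependence of the form $\|M[C_1]-M[C_2]\|_{C^0([0,T];L^1)} \le L\,T\,\|C_1-C_2\|_{C^0([0,T])}$, and (ii) the local Lipschitz continuity of $R$ and of $x\mapsto x\int \mathcal{V}M$; shrinking $T$ then yields a unique fixed point, i.e. a local solution. To globalize I would derive a priori bounds independent of $T$: the consumption term $-C\int\mathcal{V}M \le 0$ gives $C' \le R(C)$ and hence, by \eqref{hyp:R}(i) and comparison, the upper bound $C \le K_{C_0,k}$; assumption \eqref{hyp:R}(ii) with a Gronwall estimate keeps $C$ strictly positive; and integrating the $M$-equation in $a$ gives $\tfrac{d}{dt}\|M\|_{L^1} \le C\,\mathcal{V}(0)\,M(t,0) \le c_1 + c_2\|M\|_{L^1}$ via the Lipschitz bound on $f$ and $\|B\|_\infty$, so $\|M(t,\cdot)\|_{L^1}$ cannot blow up in finite time. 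These bounds allow the local solution to be continued to all of $\mathbb{R}_+$.

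I expect the main obstacle to be estimate (i): because $C$ enters the characteristic speed $C(t)\mathcal{V}(a)$, two different choices of $C$ produce different flows, so controlling $\|M[C_1]-M[C_2]\|$ requires comparing the characteristic curves, their Jacobians, and the induced boundary traces simultaneously. The boundedness hypothesis $\mathcal{V}\in L^\infty$ is precisely what keeps the flow Lipschitz in $C$ and keeps the nonlocal terms $\int\mathcal{V}M$ and $\int BM$ controlled by $\|M\|_{L^1}$, so that every contraction constant scales like $T$ and the argument closes for $T$ small.
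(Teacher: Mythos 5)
Your overall architecture (freeze one unknown, solve a linear problem, set up a contraction on a short interval, globalize with a priori bounds) is the same as the paper's, up to swapping the roles of the two unknowns: the paper takes $m\mapsto M$ as the fixed-point map, computing $C=\mathcal{U}_n(m)$ first and then $M=\mathcal{T}_n(m)$, whereas you take $C$ as the fixed-point variable and reconstruct $M[C]$ by characteristics. That inversion is harmless. The genuine gap is in the estimate you yourself flag as the main obstacle, namely
\[
\|M[C_1]-M[C_2]\|_{C^0([0,T];L^1)} \le L\,T\,\|C_1-C_2\|_{C^0([0,T])}.
\]
Writing the equation for $\mathbf{M}=M[C_1]-M[C_2]$, the coupling appears as the source term $-(C_1-C_2)(t)\,\partial_a(\mathcal{V}(a)M[C_2](t,a))$, so the estimate requires a bound on $\|\partial_a(\mathcal{V}\,M[C_2](t,\cdot))\|_{L^1}$ uniformly on $[0,T]$. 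Equivalently, in your characteristics language: two drifts $C_1\mathcal{V}$ and $C_2\mathcal{V}$ produce flows differing by $O(\|C_1-C_2\|)$, but for a generic $L^1$ profile a spatial shift of size $h$ does \emph{not} change the $L^1$ norm by $O(h)$ — continuity of translation in $L^1$ carries no rate unless the profile is in $W^{1,1}$ (or BV). So the Lipschitz dependence on $C$ is simply false in $C^0([0,T];L^1)$ without extra regularity, and your contraction does not close as stated.

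This is exactly why the hypothesis $M_0\in W^{1,1}(\mathbb{R}_+)$ appears in \eqref{CI} and why the paper devotes its key technical lemma (Lemma \ref{W1}) to propagating the bound $\|\partial_a(\mathcal{V}M(t,\cdot))\|_{L^1}\le\kappa_n$ forward in time, via the regularized absolute value $I_\delta$ applied to $M'=\partial_t M$ and a Gronwall argument on $F(t)=\frac{1}{C(t)}\bigl(\|M'(t,\cdot)\|_1+\|\mu M(t,\cdot)\|_1\bigr)$. The resulting constant $\kappa_n$ then enters both the contraction estimate and the definition \eqref{T} of the subdivision times $T_n$, and the globalization step (Proposition \ref{prop:global}) must rule out $\sum(T_n-T_{n-1})<\infty$ precisely because $\kappa_n$ could a priori degenerate — so your continuation argument, which only invokes the $L^1$ bound on $M$ and the bounds on $C$, also needs to track this $W^{1,1}$ quantity. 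To repair your proof, add a lemma propagating the $W^{1,1}$ (or BV) norm of $\mathcal{V}M[C]$ along the flow, uniformly over $C$ in the ball of $X_T$, and let the contraction constant depend on that bound.
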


The proof of Theorem \ref{th:global} takes several steps:  subdivide the time interval and use a contraction principle to get existence and uniqueness on each subdivision of the time interval. Such a method is classical for the linear renewal equation (see \cite{Perthame_transport} e.g.). Here, to handle the non linearity on the drift, we prove a priori estimates to control the $W^{1,1}$ norm. 
%\textbf{****Insister sur ce qui est "nouveau" ou pas si standard dans notre pb (c'est plus non lineaire que d'habitude a cause du couplage en $C$ ... derivee en temps pour estimation) et voir si on ne peut pas racourcir la preuve****} 

%\bigskip
%\bigskip
%%%%%%%%%%%%%%%%%%%%%%%%%%%%%%%%%%%%%%%%%%%%%%%%%%%%%%%%%%%%%%%%%%%%%%%%%%%%%%%%%%%%%%%%%%%%%%%%%%%%%%%%%%%%%%%%%%%
%%%%%%%%%%%%%%%%%%%%%%%%%%%%%%%%%%%%%%%%%%%%%%%%%%%%%%%%%%%%%%%%%%%%%%%%%%%%%%%%%%%%%%%%%%%%%%%%%%%%%%%%%%%%%%%%%%%
%%%%%%%%%%%%%%%%%%%%%%%%%%%%%%%%%%%%%%%%%%%%%%%%%%%%%%%%%%%%%%%%%%%%%%%%%%%%%%%%%%%%%%%%%%%%%%%%%%%%%%%%%%%%%%%%%%%
%\textbf{****Modifier ou supprimer ce qui suit, mettre au plus une remarque****} 

If one does not assume that $\mathcal{V}$ is bounded, existence and uniqueness still hold true.
\begin{theorem}\label{th:global2}
Assume that the functions $\mu, \, B,\, f, \, \mathcal{V}$ and $R$ satisfy \eqref{hyp:M} and \eqref{hyp:R}. Then, there exists a unique solution $(M,C)$ in $C^0 \left(\mathbb{R}_+;L^1 (\mathbb{R}_+,(1+a)\, da) \right) \times C^0 \left(\mathbb{R}_+ \right)$ to system \eqref{Pbcouple} with initial condition \eqref{CI}.
\end{theorem}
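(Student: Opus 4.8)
The plan is to reduce the unbounded case to the bounded one already treated in Theorem~\ref{th:global}, and to replace the flat $L^1$ control by a control of the first moment of $M$. Since $\mathcal{V}$ is Lipschitz it grows at most linearly, $\mathcal{V}(a)\le \mathcal{V}(0)+k_{\mathcal{V}}\,a\le C_{\mathcal{V}}(1+a)$, so two facts hold: the nonlocal consumption term $\int_0^\infty \mathcal{V}(a)M(t,a)\D a$, which is no longer dominated by $\|M\|_{L^1}$, is dominated by the weighted norm $\|M(t)\|_{L^1((1+a)\D a)}=\int_0^\infty(1+a)M(t,a)\D a$; and the characteristic flow $\dot a=C(t)\,\mathcal{V}(a)$ of the transport operator, with $C$ bounded, has solutions growing at most exponentially and hence globally defined in time. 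I would therefore truncate, setting $\mathcal{V}_n:=\min(\mathcal{V},n)$, which is Lipschitz, bounded, positive with $\mathcal{V}_n(0)=\mathcal{V}(0)>0$, and satisfies $\mathcal{V}_n\le \mathcal{V}\le C_{\mathcal{V}}(1+a)$ uniformly in $n$. For each $n$ Theorem~\ref{th:global} provides a unique solution $(M_n,C_n)$, and the whole question becomes one of uniform estimates and passage to the limit.

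The technical heart is a pair of coupled, uniform-in-$n$ a priori bounds on the moments $m_0^n(t)=\int_0^\infty M_n\D a$ and $m_1^n(t)=\int_0^\infty a\,M_n\D a$, legitimately computed on the truncated problem where $M_n$ is a genuine $L^1$ solution (rigorously, against truncated weights $a\wedge R$ with $R\to\infty$). Testing the transport equation against $1$ and integrating by parts yields $\frac{d}{dt}m_0^n\le C_n(t)\,\mathcal{V}(0)\,M_n(t,0)$; as $f$ is Lipschitz and $B\in L^\infty$ one has $M_n(t,0)=f(C_n,\int B M_n)\le f(0,0)+k_f(|C_n|+\|B\|_\infty m_0^n)$, and since $C_n$ stays bounded by $K_{C_0,k}$ by \eqref{hyp:R}, this closes into a linear differential inequality for $m_0^n$. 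Testing against $a$ instead gives $\frac{d}{dt}m_1^n\le C_n(t)\int_0^\infty \mathcal{V}_n M_n\D a\le \bar C\,C_{\mathcal{V}}\,(m_0^n+m_1^n)$. The pair $(m_0^n,m_1^n)$ thus solves a linear cooperative system of inequalities with constants independent of $n$, so Gronwall gives $\|M_n(t)\|_{L^1((1+a)\D a)}\le \Phi(T)$ on every $[0,T]$, uniformly in $n$, as soon as the initial datum has a finite first moment.

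With these uniform weighted bounds I would pass to the limit $n\to\infty$. The uniform first-moment bound is exactly a tightness statement: it prevents mass from escaping to $a=+\infty$, which is what is needed to pass to the limit in the unbounded nonlocal term, $\int_0^\infty \mathcal{V}_n M_n\D a\to \int_0^\infty \mathcal{V}M\D a$, and hence in the $C$-equation and the boundary term. Combined with the stability estimates already established for the bounded problem, now read in the weighted norm, this identifies the limit $(M,C)$ as a solution of \eqref{Pbcouple}--\eqref{CI} in $C^0(\mathbb{R}_+;L^1(\mathbb{R}_+,(1+a)\D a))\times C^0(\mathbb{R}_+)$. Uniqueness follows from a weighted Gronwall estimate applied directly to the difference of two solutions, the only new point being that the differences of the nonlocal terms are bounded as $\big|\int \mathcal{V}(M_1-M_2)\D a\big|\le C_{\mathcal{V}}\|M_1-M_2\|_{L^1((1+a)\D a)}$ and $\big|\int B(M_1-M_2)\D a\big|\le\|B\|_\infty\|M_1-M_2\|_{L^1((1+a)\D a)}$, so the contraction argument of Theorem~\ref{th:global} carries over verbatim in the weighted norm.

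The step I expect to be the main obstacle is precisely this limit passage for the unbounded consumption term: one must rule out concentration of mass at infinity and justify the convergence $\int \mathcal{V}_n M_n\D a\to\int\mathcal{V}M\D a$, which rests entirely on the uniform first-moment bound of the second paragraph and on the at-most-linear growth of $\mathcal{V}$ (a superlinear $\mathcal{V}$ would both break the cooperative moment system and allow finite-time blow-up of characteristics). A minor but genuine point is that the weighted norm can only be propagated if the initial datum itself lies in $L^1(\mathbb{R}_+,(1+a)\D a)$, i.e. has a finite first moment; since $M_0\in W^{1,1}(\mathbb{R}_+)$ alone does not guarantee this, it should be recorded among the hypotheses of the theorem.
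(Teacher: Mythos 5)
Your route differs from the paper's: the paper (which only sketches this proof in one sentence) reruns the fixed-point construction of Theorem~\ref{th:global} directly in the weighted space $L^1(\mathbb{R}_+,(1+a)\,da)$, using the evolution of $\int_0^\infty M\,da$ to close the estimate on $\int_0^\infty a\,M\,da$; you instead truncate $\mathcal{V}$, invoke Theorem~\ref{th:global} for each $\mathcal{V}_n=\min(\mathcal{V},n)$, and try to pass to the limit. Your coupled moment estimates for $(m_0^n,m_1^n)$ are correct and are essentially the same cooperative Gronwall system the paper has in mind, and your remark that $M_0\in W^{1,1}$ does not by itself give a finite first moment is a legitimate observation about the statement of the theorem.

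The gap is in the limit passage, exactly where you anticipated trouble, and it is not closed by what you prove. A uniform bound $\sup_n\sup_{[0,T]}\int_0^\infty(1+a)M_n(t,a)\,da<\infty$ gives tightness of the measures $M_n\,da$ (via $\int_{a>A}M_n\,da\le m_1^n/A$), but it does \emph{not} give tightness of $(1+a)M_n\,da$: the bound on $\int_{a>A}(1+a)M_n\,da$ that it yields does not tend to $0$ as $A\to\infty$ uniformly in $n$, because the first moment itself may concentrate at $a=+\infty$. It is precisely this stronger, uniform smallness of the weighted tails that is needed both to conclude $\int\mathcal{V}_nM_n\,da\to\int\mathcal{V}M\,da$ and to run any Cauchy or stability estimate in the $\|\cdot\|_{L^1((1+a)da)}$ norm (the error term $\int(\mathcal{V}_n-\mathcal{V}_m)M_m\,da$ is controlled only by $\int_{a\gtrsim\min(n,m)}(1+a)M_m\,da$, which again requires uniform integrability of $(1+a)M_m$ at infinity, not mere boundedness). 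To repair this you must propagate a strictly stronger weight uniformly in $n$: either a second moment $\int a^2M_n\,da$ (which needs the extra hypothesis $a^2M_0\in L^1$), or, without new hypotheses, a de la Vall\'ee-Poussin weight $w(a)\to\infty$ chosen so that $w(a)(1+a)M_0\in L^1$ and $w$ grows slowly enough that the same cooperative moment argument closes; alternatively one can extract the uniform tail control from the characteristics, since $\dot a=C(t)\mathcal{V}(a)\le K C_{\mathcal{V}}(1+a)$ transports the decay of $(1+a)M_0$ at a rate independent of $n$. As written, the assertion that the first-moment bound ``is exactly'' the tightness needed is the missing step, and the phrase ``combined with the stability estimates already established'' papers over the place where that step is used.
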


The proof of  Theorem \ref{th:global2} follows the same lines as the proof of  Theorem \ref{th:global}, except that it is done in the space $L^1 (\mathbb{R}_+; (1+a)\, da)$ and that the evolution of $\int_0^{+\infty} M(t,a) da$  is used to control the first momentum $\int_0^{+\infty} a M(t,a) da$.
%%%%%%%%%%%%%%%%%%%%%%%%%%%%%%%%%%%%%%%%%%%%%%%%%%%%%%%%%%%%%%%%%%%%%%%%%%%%%%%%%%%%%%%%%%%%%%%%%%%%%%%%%%%%%%%%%%
%%%%%%%%%%%%%%%%%%%%%%%%%%%%%%%%%%%%%%%%%%%%%%%%%%%%%%%%%%%%%%%%%%%%%%%%%%%%%%%%%%%%%%%%%%%%%%%%%%%%%%%%%%%%%%%%%%
%%%%%%%%%%%%%%%%%%%%%%%%%%%%%%%%%%%%%%%%%%%%%%%%%%%%%%%%%%%%%%%%%%%%%%%%%%%%%%%%%%%%%%%%%%%%%%%%%%%%%%%%%%%%%%%%%%

\subsection{A priori bounds} 
\noindent

As it is classical for this kind of problem (see \cite{Perthame_transport} e.g.) we split the time interval of existence into sub-intervals and we prove existence and uniqueness on each sub-interval. Let $(T_n)_{n \geq 0}$ be a strictly increasing positive sequence with $T_0=0$. For all $n \in \mathbb{N}^*$ we take the Banach spaces $X_n$ and $Y_n$ to be
\begin{eqnarray*}
X_n=C^0 \left([T_{n-1},T_{n}];L^1 (\mathbb{R}_+,\mathbb{R}_+) \right), &\quad ||m||_{X_n} = \sup_{T_{n-1} \leq t \leq T_{n}} ||m(t,.)||_{1}, \\
Y_n=C^0 \left([T_{n-1},T_{n}] \right), & \quad ||c||_{Y_n} = \sup_{T_{n-1} \leq t \leq T_{n}} |c(t)|,
\end{eqnarray*}
%Let $n \in \mathbb{N}^*$, $m \in X_n$, 
and the operators $\mathcal{T}_n$ and $\mathcal{U}_n$ to be defined by $M := \mathcal{T}_n(m)$ and $C := \mathcal{U}_n(m)$ solutions to the system
\begin{eqnarray}
& &C '(t) = R(C (t)) - C (t) \, \int_{0}^{+ \infty } \mathcal{V}(a) \, m (t,a) \, da, \quad  t \in [T_{n-1},T_{n}], \label{Aprioribound1}\\
& &\partial_t M (t, a) +  C (t) \, \partial_a (\mathcal{V}(a) \, M (t, a)) + \mu (a) \, M (t, a) = 0, \quad  t \in [T_{n-1},T_{n}] \, , a \in \mathbb{R}_+^*, \label{Aprioribound2}\\
& &M (t,0) = f \left(C(t),\int_{0}^{+ \infty } B(a) \, M(t,a) \, da \right), \quad  t \in [T_{n-1},T_{n}], \label{Aprioribound3}\\
& &(M(T_{n-1},.),C(T_{n-1})) = \left(M_{T_{n-1}},C_{T_{n-1}}\right)\in W^{1,1}(\mathbb{R}_+) \times ]0,K_{C_0,k}]. \label{Aprioribound4}
\end{eqnarray}

\begin{proposition}\label{proposition:C}
Assume that the function $R$ satisfies assumption \eqref{hyp:R} and let $m \in X_n$. Then, there exists a unique solution $C=\mathcal{U}_n(m)$ of \eqref{Aprioribound1} with initial condition \eqref{Aprioribound4} and it satisfies
$\mathcal{U}_n(m)> 0 $ and $ ||\mathcal{U}_n(m)||_{Y_n} \leq K_{C_{0},k} < \infty$.
\end{proposition}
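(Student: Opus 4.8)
The plan is to freeze the nonlocal coefficient and read \eqref{Aprioribound1} as a scalar non-autonomous ODE. Given $m \in X_n$, set $g(t) := \int_{0}^{+\infty}\mathcal{V}(a)\, m(t,a)\, da$. Since $m \in C^0([T_{n-1},T_n];L^1(\mathbb{R}_+))$, this map is well defined, non-negative and continuous on the compact interval $[T_{n-1},T_n]$ (using $\mathcal{V}\in L^\infty$ in the setting of Theorem \ref{th:global}), hence bounded, say $0\le g(t)\le M_g$. Equation \eqref{Aprioribound1} then reads $C'(t)=F(t,C(t))$ with $F(t,x):=R(x)-g(t)\,x$. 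Because $R$ is locally Lipschitz and $x\mapsto g(t)\,x$ is Lipschitz for each fixed $t$, the map $F$ is continuous in $t$ and locally Lipschitz in $x$, so the Cauchy--Lipschitz theorem yields a unique maximal solution $C=\mathcal{U}_n(m)$ on some interval $[T_{n-1},T^\ast)$. The whole point is then to establish a priori bounds forcing $T^\ast\ge T_n$ together with the two-sided estimate $0<\mathcal{U}_n(m)\le K_{C_0,k}$.

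First I would prove non-negativity. Evaluating assumption \eqref{hyp:R}$ii)$ at $x=0$ gives $R(0)\ge 0$, so the constant function $0$ is a subsolution of \eqref{Aprioribound1}; since $C(T_{n-1})=C_{T_{n-1}}>0$, the scalar comparison principle gives $C(t)\ge 0$ on $[T_{n-1},T^\ast)$. With $C\ge 0$ and $g\ge 0$ the consumption term satisfies $g(t)\,C(t)\ge 0$, whence $C'(t)\le R(C(t))$. Comparing with the autonomous equation $X'=R(X)$, $X(T_{n-1})=C_{T_{n-1}}$, and using $C_{T_{n-1}}\le K_{C_0,k}\le C_0$ from \eqref{Aprioribound4} together with the order-preserving property of the scalar flow, the solution $X$ is dominated by the one issued from $C_0$, which is bounded by $K_{C_0,k}$ by assumption \eqref{hyp:R}$i)$. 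Hence $C(t)\le K_{C_0,k}$ on the whole interval of existence.

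It remains to upgrade $C\ge 0$ to strict positivity. Using assumption \eqref{hyp:R}$ii)$ in the form $R(x)\ge x\,S(x)$, the equation gives $C'(t)\ge C(t)\,\big(S(C(t))-g(t)\big)$. On the set where $C>0$ I divide to obtain $(\ln C)'(t)\ge S(C(t))-g(t)$. Since $S$ is locally Lipschitz, hence bounded below on the compact range $[0,K_{C_0,k}]$ already secured for $C$, and $g\le M_g$, the right-hand side is bounded below by some constant $-M$; integration yields $C(t)\ge C_{T_{n-1}}\,e^{-M(t-T_{n-1})}>0$. A standard continuity argument (no first zero of $C$ can occur) then gives $C>0$ throughout $[T_{n-1},T^\ast)$.

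Finally, the two-sided bound $0<C\le K_{C_0,k}$ confines the solution to a compact set of $x$-values on which $F$ is globally Lipschitz in $x$; the usual blow-up alternative then excludes $T^\ast<T_n$, so the maximal solution extends to all of $[T_{n-1},T_n]$, and uniqueness is inherited from the local Cauchy--Lipschitz statement. I expect the main obstacle to be the a priori estimates rather than the fixed-time ODE theory: the delicate point is the interplay whereby non-negativity is first needed to drop the consumption term and obtain the upper bound $C\le K_{C_0,k}$ from \eqref{hyp:R}$i)$, while that upper bound is in turn needed to control $S(C)$ and extract strict positivity from \eqref{hyp:R}$ii)$. Correctly ordering these comparison arguments, and invoking the order-preserving property of the autonomous flow to transfer assumption \eqref{hyp:R}$i)$ from $C_0$ to the datum $C_{T_{n-1}}$, is what requires care.
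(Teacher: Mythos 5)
Your proof is correct and follows essentially the same route as the paper's (very terse) argument: positivity from the differential inequality $C'\ge C\,(S(C)-\int\mathcal{V}m)$ supplied by \eqref{hyp:R}$ii)$, and then the upper bound $K_{C_0,k}$ from $C'\le R(C)$ and \eqref{hyp:R}$i)$. You merely make explicit the Cauchy--Lipschitz/blow-up bookkeeping and the ordering of the two comparison steps that the paper leaves implicit.
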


\begin{proof}
Using assumption \eqref{hyp:R}, we see that
\[C'(t) \geq C(t) \left(S(C(t)) - \int_0^{+ \infty} \mathcal{V}(a) m(t,a) da \right) \textrm{ on }  [T_{n-1},T_{n}]; \] 
but since $C_{T_{n-1}} >0$, $0$ is a strict subsolution of $C$,  hence $C'(t) \leq R(C(t))$ and therefore $C$ is bounded by $K_{C_0,k}$ on $[T_{n-1},T_{n}]$. 
\end{proof}

Existence and uniqueness of $\mathcal{T}_n(m)$
%, which shows that $\mathcal{T}_n:X_n \rightarrow X_n$ is well-defined, 
are stated in Proposition \ref{proposition:existencefac}, whose proof is similar to the one of Theorem \ref{th:global} in \cite{Perthame_transport}, and it is not repeated here.

\begin{proposition}\label{proposition:existencefac}
Assume that the functions $\mu, \, B,\, f, \, \mathcal{V}$ and $R$ satisfy \eqref{hyp:M}, \eqref{hyp:R}, that $\mathcal{V}\in L^\infty(\R_+)$ and let $m \in X_n$. Then, there exists a unique solution $M=\mathcal{T}_n(m)$ to the following system
\begin{equation}\label{eq:exisfac}
\left\{
\begin{array}{l}
\partial_t M (t, a) +  \mathcal{U}_n(m)(t) \, \partial_a (\mathcal{V}(a) \, M (t, a)) + \mu (a) \, M (t, a) = 0, \quad t \in [T_{n-1},T_{n}], \, a \in \mathbb{R}_+^*, \\
M (t,0) = f \left(\mathcal{U}_n(m)(t),\int_{0}^{+ \infty } B(a) \, M(t,a) \, da \right), \quad  t \in [T_{n-1},T_{n}], \\
M(T_{n-1},.) = M_{T_{n-1}}\in W^{1,1}(\mathbb{R}_+,\mathbb{R}_+).
\end{array}
\right.
\end{equation}
\end{proposition}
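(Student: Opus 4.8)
The plan is to treat \eqref{eq:exisfac} as a \emph{linear} transport equation with a given, regular drift and a mild (Lipschitz) nonlinearity confined to the incoming boundary, and to solve it by the method of characteristics combined with a Banach fixed point on the boundary trace. Throughout, $m \in X_n$ is frozen, so that $C := \mathcal{U}_n(m)$ is a fixed datum: by Proposition \ref{proposition:C} it is continuous, positive, and satisfies $0 < C(t) \le K_{C_0,k}$ on $[T_{n-1},T_n]$. Consequently the drift field $(t,a) \mapsto C(t)\,\mathcal{V}(a)$ is nonnegative and Lipschitz in $a$, and the characteristic ODE $\dot a = C(t)\,\mathcal{V}(a)$ admits, for each starting point, a unique absolutely continuous forward solution; since $\mathcal{V}(0)>0$ the flow genuinely enters the domain through $a=0$, and since the speed is nonnegative it is increasing in $a$, so each point $(t,a)$ is reached by exactly one characteristic, issued either from the initial line $\{t=T_{n-1}\}$ or from the axis $\{a=0\}$. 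Here the assumption $\mathcal{V}\in L^\infty(\R_+)$ is used: it gives a finite propagation speed $\|C\|_\infty\|\mathcal{V}\|_\infty$, which keeps the characteristics in a bounded region over $[T_{n-1},T_n]$ and guarantees that $M(t,\cdot)\in L^1$ whenever the data are.

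First I would derive the Duhamel representation. Keeping the conservative form $\partial_a(\mathcal{V} M)$, one integrates the equation along characteristics: $M$ decays by the cumulated factor coming from $\mu$ and from the divergence $C(t)\,\mathcal{V}'$ of the drift, so that $M(t,a)$ is expressed explicitly in terms of (i) the transported and damped initial datum $M_{T_{n-1}}$, for characteristics emanating from $t=T_{n-1}$, and (ii) the transported and damped boundary trace $b(\tau):=M(\tau,0)$ evaluated at the entrance time $\tau\in[T_{n-1},t]$, for characteristics emanating from $a=0$. This yields a formula $M = \mathcal{M}[b]$ depending linearly and explicitly on the unknown trace $b$.

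Next I would set up the fixed point. Substituting $M=\mathcal{M}[b]$ into the boundary condition gives the scalar equation
\[
b(t) = f\!\left(C(t),\ \int_0^{+\infty} B(a)\,\mathcal{M}[b](t,a)\,da\right) =: \Phi(b)(t),
\]
to be solved in $C^0([T_{n-1},T_n])$. Since $f$ is $k_f$-Lipschitz and $B\in L^\infty$, the difference $|\Phi(b_1)(t)-\Phi(b_2)(t)|$ is bounded by $k_f\,\|B\|_\infty$ times the $L^1$-distance $\|\mathcal{M}[b_1](t,\cdot)-\mathcal{M}[b_2](t,\cdot)\|_1$, and the representation shows this last quantity is controlled by $\int_{T_{n-1}}^t |b_1(\tau)-b_2(\tau)|\,d\tau$, up to a constant depending only on $\|\mu\|_\infty$, $\|B\|_\infty$, the bound on $C$ and on the flow Jacobian. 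This is a Volterra-type contraction estimate: either iterating $\Phi$ (whose $j$-th iterate has a Lipschitz constant decaying like a factorial) or working in the weighted norm $\sup_t e^{-\lambda t}|b(t)|$ with $\lambda$ large makes $\Phi$ a strict contraction on the whole interval, so Banach's theorem provides a unique trace $b$, hence a unique $M=\mathcal{T}_n(m)=\mathcal{M}[b]$. Finally one checks that this $M$ belongs to $X_n$, is nonnegative (from $M_{T_{n-1}}\ge 0$, $f\ge 0$ and the sign of the coefficients along characteristics), and indeed solves \eqref{eq:exisfac} weakly.

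The step I expect to be the main obstacle is the low regularity of $\mathcal{V}$: being only Lipschitz, $\mathcal{V}'$ exists merely almost everywhere, so the characteristic representation, the flow Jacobian entering the change of variables $a_0\mapsto a$ in the integral $\int B\,M$, and the verification that $\mathcal{M}[b]$ solves the equation must all be justified in a weak sense rather than classically. I would handle this by never expanding $\partial_a(\mathcal{V} M)$, working with the distributional formulation obtained by testing against smooth functions, and using the absolute continuity of the flow together with the a.e.\ differentiability of $\mathcal{V}$ to legitimize the change of variables. The time dependence of $C(t)$ only reparametrizes the characteristics and causes no additional difficulty, since $C$ is continuous and bounded on $[T_{n-1},T_n]$.
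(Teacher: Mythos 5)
Your proposal is correct and follows essentially the route the paper itself relies on: the paper does not reprove this proposition but defers to the renewal-equation existence theorem in \cite{Perthame_transport}, whose argument is exactly your characteristics/Duhamel representation combined with a Banach fixed point for the boundary trace $b(t)=M(t,0)$ viewed as a Volterra equation. Your device of working with $\mathcal{V}M$ (never expanding $\partial_a(\mathcal{V}M)$, so that only $e^{-\int\mu}$ appears along characteristics) to sidestep the a.e.-defined $\mathcal{V}'$ is also precisely what the authors do when they write the explicit solution in Section 4.
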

\medskip

In order to prove that $\mathcal{T}_n$ is a contraction on $X_n$, we first bound  $\mathcal{T}_n$ in Lemma \ref{proposition:rayon}. Next, since the non-linearity on the advection field brings a new problem,  in Lemma \ref{W1}, we provide estimates on $||\partial_a (\mathcal{V} M)||_{X_n}$.

Let us first give some notations:  
\begin{equation}\label{eq:deftheta}
\theta:=K_{C_0,k}  \, \mathcal{V}(0) \, k_f \, ||B||_{\infty}>0,
\end{equation} 
\begin{equation}\label{rayon}
\mathcal{R}_n := e^{\theta (T_{n}-T_{n-1})} \, \left(||M_{T_{n-1}}||_{1} + K_{C_0,k}  \, \mathcal{V}(0) \sup_{x \in [0,K_{C_0,k}]} f \left(x, 0 \right) \frac{1 - e^{-\theta (T_{n}-T_{n-1})}}{\theta} \right).
\end{equation}

\begin{lemma}\label{proposition:rayon}
Assume that the functions $\mu, \, B,\, f, \, \mathcal{V}$ and $R$ satisfy \eqref{hyp:M}, \eqref{hyp:R}, that $\mathcal{V}\in L^\infty(\R_+)$ and let $m \in X_n$. Then, the solution $M=\mathcal{T}_n(m)$ to the problem (\ref{Aprioribound2} -- \ref{Aprioribound4}) satisfies $$\mathcal{T}_n(m)\geq 0 \mbox{ and } \|\mathcal{T}_n(m)\|_{X_n} \leq \mathcal{R}_n < \infty.$$ Moreover, there exists $k_1>0$ and $k_2>0$ such that $\mathcal{T}_n(m) (t,0) \leq k_1 + k_2 \, \mathcal{R}_n$ on $[T_{n-1},T_{n}]$.
\end{lemma}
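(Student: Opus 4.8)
The plan is to prove the three assertions in turn: nonnegativity by the method of characteristics, the $L^1$ bound $\|\mathcal{T}_n(m)\|_{X_n}\le\mathcal{R}_n$ by integrating the transport equation in $a$ and invoking Gronwall's lemma, and finally the boundary bound as a direct consequence of the preceding estimate.

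For nonnegativity, I would note that $M_{T_{n-1}}\ge 0$ and that the boundary value $M(t,0)=f(\cdot,\cdot)\ge 0$ since $f$ is $\R_+$-valued by \eqref{hyp:M}. As $C=\mathcal{U}_n(m)>0$ is continuous by Proposition \ref{proposition:C} and $\mathcal{V}$ is Lipschitz, the characteristics of \eqref{Aprioribound2} are well defined, and the damping term $\mu M$ with $\mu\ge 0$ only decreases $M$ along them; hence $\mathcal{T}_n(m)\ge 0$ on $[T_{n-1},T_n]\times\R_+$.

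For the $L^1$ estimate, set $N(t):=\int_0^{+\infty}M(t,a)\,da=\|M(t,\cdot)\|_1$, the last equality using $M\ge 0$. Integrating \eqref{Aprioribound2} over $a\in(0,+\infty)$ and using that $M(t,\cdot)\in W^{1,1}(\R_+)$ forces $M(t,a)\to 0$ as $a\to+\infty$, together with $\mathcal{V}\in L^\infty$, the transport term integrates to $-C(t)\,\mathcal{V}(0)\,M(t,0)$, so that $N'(t)=C(t)\,\mathcal{V}(0)\,M(t,0)-\int_0^{+\infty}\mu(a)M(t,a)\,da\le C(t)\,\mathcal{V}(0)\,M(t,0)$. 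Bounding $M(t,0)$ through the Lipschitz estimate $f\!\left(C(t),\int_0^{+\infty} B(a)M(t,a)\,da\right)\le f(C(t),0)+k_f\|B\|_\infty N(t)$, using $C(t)\le K_{C_0,k}$ from Proposition \ref{proposition:C} and $f(C(t),0)\le\sup_{x\in[0,K_{C_0,k}]}f(x,0)$, I obtain the linear differential inequality $N'(t)\le K_{C_0,k}\,\mathcal{V}(0)\sup_{x\in[0,K_{C_0,k}]}f(x,0)+\theta\,N(t)$ with $\theta$ given by \eqref{eq:deftheta}. Gronwall's lemma on $[T_{n-1},t]$ then produces exactly the right-hand side of \eqref{rayon} with $T_n$ replaced by $t$; since $\theta>0$ and the forcing is nonnegative this quantity is nondecreasing in $t$, so the supremum over $[T_{n-1},T_n]$ is attained at $T_n$ and equals $\mathcal{R}_n$, giving $\|\mathcal{T}_n(m)\|_{X_n}\le\mathcal{R}_n<\infty$.

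The boundary bound is then immediate: reusing the Lipschitz estimate on $f$ and the bound $N(t)\le\mathcal{R}_n$ just obtained, $M(t,0)\le\sup_{x\in[0,K_{C_0,k}]}f(x,0)+k_f\|B\|_\infty\mathcal{R}_n$, so the claim holds with $k_1:=\sup_{x\in[0,K_{C_0,k}]}f(x,0)$ and $k_2:=k_f\|B\|_\infty$. The only points requiring care are the vanishing of the flux $\mathcal{V}(a)M(t,a)$ at $a=+\infty$ (which rests on the $W^{1,1}$ regularity guaranteed by Proposition \ref{proposition:existencefac}) and the monotonicity of the Gronwall bound ensuring the supremum is attained at $T_n$; neither is a serious obstacle, so the main effort is simply the careful bookkeeping of constants needed to reproduce \eqref{rayon} exactly.
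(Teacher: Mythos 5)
Your proof is correct and follows essentially the same route as the paper's: integrate the equation in $a$, bound the boundary term via the Lipschitz property of $f$ and the bound $C\le K_{C_0,k}$ from Proposition \ref{proposition:C}, apply Gronwall, and read off the boundary estimate from the resulting $L^1$ bound. The only cosmetic difference is that you establish nonnegativity by characteristics where the paper invokes the fact that $0$ is a subsolution, and you spell out the vanishing of the flux $\mathcal{V}(a)M(t,a)$ at infinity, which the paper leaves implicit.
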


\begin{proof}
$0$ is a subsolution to (\ref{Aprioribound2} -- \ref{Aprioribound4}), hence $M$ is non-negative and
\[\frac{d}{dt} \int_{0}^{+ \infty } M(t, a) \, da + \int_{0}^{+ \infty } \mu (a) \, M(t, a) \, da \leq C(t)  \, \mathcal{V}(0) \, M(t,0).\]
Using that $f$ is $k_f$-Lipschitz together with Proposition \ref{proposition:C}, we can bound $M(t,0)$:
\[M(t,0)  \leq  \sup_{x \in [0,K_{C_0,k}]} f \left(x, 0 \right) + k_f \,  ||B||_{\infty} \, \int_{0}^{+ \infty } M(t,a) \, da  ,\]
therefore
\[\frac{d}{dt} \int_{0}^{+ \infty } M(t, a) \, da \leq K_{C_0,k}  \, \mathcal{V}(0) \, \left(\sup_{x \in [0,K_{C_0,k}]} f \left(x, 0 \right) + k_f \, ||B||_{\infty} \int_{0}^{+ \infty } M(t,a) \, da  \right),\] 
and Gronwall's lemma together with \eqref{eq:deftheta}  yields 
$$\|M(t,.)\|_{1} \leq e^{\theta (t-T_{n-1})} \, \left(||M_{T_{n-1}}||_{1} + K_{C_0,k}  \, \mathcal{V}(0) \sup_{x \in [0,K_{C_0,k}]} f \left(x, 0 \right) \frac{1 - e^{-\theta (t-T_{n-1})}}{\theta} \right).$$
Consequently, taking the supremum on $[T_{n-1},T_n]$, we get
\begin{equation*}
||\mathcal{T}_n(m)||_{X_n} \leq e^{\theta (T_{n}-T_{n-1})} \, \left(||M_{T_{n-1}}||_{1} + K_{C_0,k}  \, \mathcal{V}(0) \sup_{x \in [0,K_{C_0,k}]} f \left(x, 0 \right) \frac{1 - e^{-\theta (T_{n}-T_{n-1})}}{\theta} \right).
\end{equation*}
In addition we easily see that $M(t,0)  \leq  \sup_{x \in [0,K_{C_0,k}]} f \left(x, 0 \right) + k_f \,  ||B||_{\infty} \, ||\mathcal{T}_n(m)||_{X_n}$.
\end{proof}

\begin{lemma}\label{W1}
Assume that the functions $\mu, \, B,\, f, \, \mathcal{V}$ and $R$ satisfy \eqref{hyp:M}, \eqref{hyp:R}, and let $m\in \mathcal{B}_{|| .||_{X_n}}(0, \mathcal{R}_n)$. Then, there exists $\kappa_n \geq 0$ depending on $C_0$, $M_{T_{n-1}}$, $\mathcal{R}_n$  and on $T_{n} - T_{n-1}$ such that $\|\partial_a (\mathcal{V} \mathcal{T}_n(m))\|_{X_n} \leq \kappa_n$.
\end{lemma}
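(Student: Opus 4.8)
The plan is to avoid differentiating the equation in the variable $a$, since the coefficients $\mu$ and $\mathcal{V}$ are not regular enough there (recall $\mu$ is merely bounded), and instead to control $\partial_t M$ in $L^1$ and then recover $\partial_a(\mathcal{V}M)$ algebraically from the equation itself. Write $M=\mathcal{T}_n(m)$ and $C=\mathcal{U}_n(m)$. Since $C$ is continuous and strictly positive on the compact interval $[T_{n-1},T_n]$ by Proposition \ref{proposition:C}, it is bounded below by some $c_n>0$; moreover $C'=R(C)-C\int_0^{+\infty}\mathcal{V}(a)m(t,a)\,da$ is bounded on $[T_{n-1},T_n]$, because $R$ is continuous on the compact range of $C$ and $\int_0^{+\infty}\mathcal{V}m\,da\le \|\mathcal{V}\|_\infty\,\mathcal{R}_n$. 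From the first line of \eqref{eq:exisfac} one has the pointwise identity $\partial_a(\mathcal{V}M)=-\tfrac1C(\partial_t M+\mu M)$, so that $\|\partial_a(\mathcal{V}M)(t,\cdot)\|_1\le \tfrac1{c_n}\big(\|\partial_t M(t,\cdot)\|_1+\|\mu\|_\infty\mathcal{R}_n\big)$. Hence it suffices to bound $z:=\partial_t M$ in $X_n$.

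First I would derive the equation satisfied by $z$. Differentiating \eqref{eq:exisfac} in time and substituting the identity above for $\partial_a(\mathcal{V}M)$ produces a renewal equation of the same transport type,
\begin{equation*}
\partial_t z + C\,\partial_a(\mathcal{V}z) + \left(\mu-\frac{C'}{C}\right)z = \frac{C'}{C}\,\mu\,M,
\end{equation*}
whose zeroth-order coefficient is bounded (since $C\in[c_n,K_{C_0,k}]$ and $C'$ is bounded) and whose right-hand side lies in $L^1$ with $\big\|\tfrac{C'}{C}\mu M\big\|_1\le \tfrac{\|C'\|_\infty}{c_n}\|\mu\|_\infty\mathcal{R}_n$. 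I would then run the standard $L^1$ a priori estimate: multiply by $\mathrm{sgn}(z)$ and integrate in $a$ over $\R_+$. The transport term integrates by parts to $-C\,\mathcal{V}(0)\,|z(t,0)|\le 0$, using $\partial_a(\mathcal{V}z)\,\mathrm{sgn}(z)=\partial_a(\mathcal{V}|z|)$ and the decay of $\mathcal{V}z$ at infinity; the zeroth-order term is controlled by $\tfrac{\|C'\|_\infty}{c_n}\|z\|_1$ since $\mu\ge0$; and the source contributes at most $\tfrac{\|C'\|_\infty}{c_n}\|\mu\|_\infty\mathcal{R}_n$.

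The only delicate contribution is the boundary value $z(t,0)=\partial_t M(t,0)$. I would control it by differentiating the boundary condition \eqref{Aprioribound3} in time: since $f$ is $k_f$-Lipschitz and $\tfrac{d}{dt}\int_0^{+\infty}B\,M\,da=\int_0^{+\infty}B\,z\,da$, one gets $|z(t,0)|\le k_f\,|C'(t)|+k_f\,\|B\|_\infty\,\|z(t,\cdot)\|_1$. Collecting all terms yields a differential inequality $\tfrac{d}{dt}\|z(t,\cdot)\|_1\le \alpha_n\,\|z(t,\cdot)\|_1+\beta_n$ with $\alpha_n,\beta_n$ depending only on $C_0$, $\mathcal{R}_n$ and the data. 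Gronwall's lemma then bounds $\|z(t,\cdot)\|_1$ on $[T_{n-1},T_n]$, the required initial value $\|z(T_{n-1},\cdot)\|_1=\|\partial_t M(T_{n-1},\cdot)\|_1$ being finite because $M_{T_{n-1}}\in W^{1,1}$ forces $\partial_a(\mathcal{V}M_{T_{n-1}})\in L^1$, hence $\partial_t M(T_{n-1},\cdot)=-C(T_{n-1})\,\partial_a(\mathcal{V}M_{T_{n-1}})-\mu M_{T_{n-1}}\in L^1$. Plugging this into $\|\partial_a(\mathcal{V}M)\|_{X_n}\le\tfrac1{c_n}\big(\|z\|_{X_n}+\|\mu\|_\infty\mathcal{R}_n\big)$ produces a constant $\kappa_n$ with the stated dependence on $C_0$, $M_{T_{n-1}}$, $\mathcal{R}_n$ and $T_n-T_{n-1}$.

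The main obstacle is rigor rather than the formal computation: the differentiation in time and the $L^1$ estimate on $z=\partial_t M$ must be justified through time difference quotients $\delta^{-1}\big(M(t+\delta,\cdot)-M(t,\cdot)\big)$, deriving the above inequality uniformly in $\delta$ and passing to the limit, and the identity $z(t,0)=\partial_t\big(M(t,0)\big)$ (commuting the boundary trace with $\partial_t$) must be checked within the same approximation scheme. Once these are established, the reduction from $\partial_a(\mathcal{V}M)$ to $\partial_t M$ via the equation makes the dependence of $\mu$ only through its $L^\infty$ norm entirely harmless.
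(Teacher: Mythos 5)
Your overall strategy is the same as the paper's: set $z=\partial_t M$, recover $\partial_a(\mathcal{V}M)=-\tfrac1C(z+\mu M)$ algebraically from the equation, differentiate the equation and the boundary condition in time, and close an $L^1$ Gronwall estimate on $z$ (the paper regularizes $|\cdot|$ with a $C^1$ function $I_\delta$ exactly as you anticipate in your last paragraph). But there is one concrete gap: your first step bounds $C=\mathcal{U}_n(m)$ below by ``some $c_n>0$'' obtained from continuity and positivity on the compact interval. That $c_n$ depends on the particular $m$ through the solution $C$, whereas the constant $\kappa_n$ must be \emph{uniform over the ball} $\mathcal{B}_{\|\cdot\|_{X_n}}(0,\mathcal{R}_n)$ --- it enters the definition \eqref{T} of the time step and the contraction estimate in Proposition \ref{th:existN}, which is taken over all $m_1,m_2$ in the ball. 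A soft compactness argument does not deliver this uniformity, and the paper explicitly flags the point (``Although we can not bound $\frac{1}{C(t)}$ on $[T_{n-1},T_n]$\dots''). The paper's way around it is to estimate not $\|z(t,\cdot)\|_1$ but the weighted quantity $F(t)=\frac{1}{C(t)}\big(\|z(t,\cdot)\|_1+\|\mu M(t,\cdot)\|_1\big)$, for which the $\tfrac{C'}{C}\|z\|_1$ term produced by differentiating in time cancels against $\tfrac{d}{dt}\tfrac1{C}$, and the troublesome source $\tfrac{1}{C}\|\mu M\|_1$ is reabsorbed as $\le F(t)$; since $F(t)$ itself dominates $\|\partial_a(\mathcal{V}M)(t,\cdot)\|_1$ by \eqref{MajW1}, no lower bound on $C$ is ever invoked.

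Your route is repairable: hypothesis \eqref{hyp:R}(ii) gives $C'\ge C\big(S(C)-\|\mathcal{V}\|_\infty\mathcal{R}_n\big)\ge -C\big(\sup_{[0,K_{C_0,k}]}|S|+\|\mathcal{V}\|_\infty\mathcal{R}_n\big)$, hence the explicit, $m$-uniform lower bound $C(t)\ge C_{T_{n-1}}e^{-(\sup|S|+\|\mathcal{V}\|_\infty\mathcal{R}_n)(T_n-T_{n-1})}=:c_n$, and with this $c_n$ your Gronwall argument goes through and yields a legitimate $\kappa_n$. The price is that your $\kappa_n$ then carries an explicit factor $1/C_{T_{n-1}}$, which makes the recursion in Lemma \ref{kappan} (needed for Proposition \ref{prop:global}) heavier to close, whereas the paper's formulation propagates only $F(T_{n-1})\le\kappa_{n-1}$ and never isolates that factor. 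Two minor remarks: your control of the boundary term via $\frac{d}{dt}\int B\,M=\int B\,z$ is actually cleaner than the paper's (which re-substitutes the equation and integrates by parts, paying $\|B'\|_\infty$); and the boundary contribution $C(t)\mathcal{V}(0)|z(t,0)|$ appears with a \emph{positive} sign on the right-hand side after integration by parts, so it must indeed be estimated, not discarded --- you do handle it, but the phrase ``$\le 0$'' is misleading.
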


\begin{proof}
Let us define the function $M'$ by
\begin{equation}\label{eq:defMprime}
M'(t,a) := -C(t) \, \partial_a (\mathcal{V}(a) \, M(t,a)) - \mu(a) \, M(t,a)\, , \quad t\in[T_{n-1},T_{n}]\, , a\in \mathbb{R}_+^*\, ,
\end{equation}
using Proposition \ref{proposition:C},  we know that $C(t)>0$ for all $t \in [T_{n-1},T_{n}]$, hence
\begin{equation}\label{MajW1}
||\partial_a (\mathcal{V} \, M (t, .))||_{1} \leq \frac{1}{C(t)} \left(||M'(t,.)||_{1} + ||\mu \, M(t,.) ||_{1} \right).
\end{equation}
Furthermore, since $M$ is a solution to \eqref{Aprioribound2}, we get
\begin{equation}\label{eq:deriveM}
M'(t,a)= \partial_t M(t,a).
\end{equation}
Differentiating \eqref{Aprioribound2} with respect to the time
%, gives  
%$\partial_t M' (t, a) +  C (t) \, \partial_a (\mathcal{V}(a) \, M' (t, a)) +  C' (t) \, \partial_a (\mathcal{V}(a) \, M (t, a)) + \mu (a) \, M' (t, a) = 0$.
and recalling the definition \eqref{eq:defMprime} of $M'$ gives
\begin{equation}\label{eq:MW1}
\partial_t M' (t, a) +  C (t) \, \partial_a (\mathcal{V}(a) \, M' (t, a)) - \frac{C' (t)}{C(t)} M' (t, a) - \frac{\mu (a) C' (t)}{C(t)} \, M (t, a) + \mu (a) \, M' (t, a) = 0.
\end{equation}

As in \cite{Perthame_transport}, for $\delta >0$ let us define the $C^1$ function $I_{\delta} (u) :=  \frac{u^2}{2 \delta} $ if $ |u| \leq \delta$ and $I_{\delta} (u) := |u| - \frac{\delta}{2}$ if $ |u| \geq \delta$. For all $t \in [T_{n-1},T_{n}]$ and $a\in \mathbb{R}_+$, we have 
 \[I_{\delta}' (M' (t, a)) \, \partial_a (\mathcal{V}(a) \, M' (t, a)) =\partial_a (\mathcal{V}(a) \, I_{\delta} (M' (t, a))) + \,  \mathcal{V}'(a) \, ( M' (t, a) \, I_{\delta}' (M' (t, a)) -I_{\delta} (M' (t, a))).\]
Multiplying \eqref{eq:MW1} by $I_{\delta}' (M' (t, a))$, with the chain rule  one gets
\begin{eqnarray*}
\partial_t (I_{\delta} (M' (t, a)) )  +   C (t) \, \partial_a (\mathcal{V}(a) \, I_{\delta} (M' (t, a))) 
+ C (t) \, \mathcal{V}'(a) \, (M' (t, a) \, I_{\delta}' (M' (t, a)) - I_{\delta} (M' (t, a))) & \\
+ \left(\mu(a) - \frac{C' (t)}{C(t)} \right) \, I_{\delta} (M' (t, a)) 
+\left(\mu(a) - \frac{C' (t)}{C(t)} \right) (M' (t, a) \, I_{\delta}' (M' (t, a)) - I_{\delta} (M' (t, a))) &\\
= (I_{\delta} '(M' (t, a)) - 1) \, \frac{\mu (a) C' (t)}{C(t)} \, M (t, a) + \frac{\mu (a) C' (t)}{C(t)} \, M (t, a).&
\end{eqnarray*}
Observing that $-2 \leq I_{\delta} '(M' (t, a)) - 1 \leq 0$ and using \eqref{hyp:R}, it follows that
$(I_{\delta} '(M' (t, a)) - 1) \, \frac{C' (t)}{C(t)} \leq (I_{\delta} '(M' (t, a)) - 1) \left(S(C(t)) - \int_0^{+ \infty} \mathcal{V}(a) \, m(t,a) \, da \right) 
%\\& \leq & 
\leq 2 \left(|S(C(t))| + ||\mathcal{V} \, m(t,.)||_{1} \right)$. Since $0 \leq M' (t, a) \, I_{\delta}' (M' (t, a)) - I_{\delta} (M' (t, a)) \leq \frac{\delta}{2}$, we deduce that
\begin{eqnarray*}
\partial_t (I_{\delta} (M' (t, a)) ) +  C (t) \, \partial_a (\mathcal{V}(a) \, I_{\delta} (M' (t, a)))
 +  \left(\mu(a) - \frac{C' (t)}{C(t)} \right) I_{\delta} (M' (t, a))  &\leq &\\
 \left(2 |S(C(t))| + 2 ||\mathcal{V} \, m(t,.)||_{1}  + \frac{C' (t)}{C(t)}\right) \, \mu (a) M (t, a) 
+\left| \frac{C' (t)}{C(t)} - \mu(a) - C (t) \, \mathcal{V}'(a) \right| \, \frac{\delta}{2} \, .& &
\end{eqnarray*}
As $\delta \rightarrow 0$,  $I_{\delta} (M' (t, a)) \rightarrow |M' (t, a)|$, in $\mathcal{D}'(\R_+^2,\R_+)$, hence the following inequality holds true in the distribution sense
$\partial_t (|M' (t, a)| ) +  C (t) \, \partial_a (\mathcal{V}(a) \, |M' (t, a)|) + \left(\mu(a) - \frac{C' (t)}{C(t)} \right) |M' (t, a)| 
\leq \left(2 |S(C(t))| + 2 ||\mathcal{V} \, m(t,.)||_{1}  + \frac{C' (t)}{C(t)}\right) \, \mu (a) M (t, a)$,
which, after integration in $a$ yields that
\begin{eqnarray}\label{eq:Mp}
\frac{d}{dt} ||M'(t,.)||_{1} & \leq &  C (t) \, \mathcal{V}(0) \, |M' (t, 0)| +  \frac{C' (t)}{C(t)} ||M'(t,.)||_{1} - ||\mu \, M'(t,.)||_{1} \nonumber \\
& + & \left(2 |S(C(t))| + 2 ||\mathcal{V} \, m(t,.)||_{1}  + \frac{C' (t)}{C(t)}\right) \, || \mu \, M(t,.) ||_1 .
\end{eqnarray}

Let us define the function 
$F(t) := \frac{1}{C(t)} \left(||M'(t,.)||_{1} + ||\mu \, M(t,.) ||_{1} \right)$, $t \in [T_{n-1},T_{n}]$.
In view of \eqref{eq:Mp} together with the fact that $C>0$, one gets
\[
F'(t)\leq   \mathcal{V}(0) \, |M' (t, 0)| + \frac{2}{C(t)} \left(|S(C(t))| + ||\mathcal{V} \, m(t,.)||_{1}  \right) \, || \mu \, M(t,.) ||_1. \]
Although we can not bound $\frac{1}{C(t)}$ on $ [T_{n-1},T_{n}]$, we use that $\frac{|| \mu \, M(t,.) ||_1 }{C(t)} \leq F(t)$ and that $m\in \mathcal{B}_{|| .||_{X_n}}(0, \mathcal{R}_n)$ to get
\begin{equation}
F'(t)  \leq  \mathcal{V}(0) \, |M'(t,0)| +  2 \, \left( \sup_{ x \in [0,K_{C_0}]} |S(x)| + ||\mathcal{V}||_{\infty} \, \mathcal{R}_n \right) \, F(t). \label{eq:majF}
\end{equation}

Recalling next \eqref{eq:deriveM}, it follows that
$M'(t,0) = \dfrac{d}{dt} M(t,0) = \dfrac{d}{dt} \left( f \left(C(t),\int_{0}^{+ \infty } B(a) \, M(t,a) \, da \right) \right) $. Using the assumption \eqref{hyp:M} made on $f$ together with Proposition \ref{proposition:C}, equations \eqref{Aprioribound1} and \eqref{Aprioribound2}, we obtain that 
\begin{eqnarray}
|M'(t,0)| & \leq & k_f \, \left( |C'(t)| + \left| \int_0^{+\infty} B(a) \, \partial_t M(t,a)\,  da \right| \right) \nonumber \\
%& \leq & k_f \, \Big( \left|R(C(t)) - C(t) \int_0^{+\infty} \mathcal{V}(a) \, m(t,a) \, da \right| \nonumber  \\
%& + &   \left| C (t) \left(B(0) \mathcal{V} (0) M(t,0) + \int_0^{+\infty} B'(a) \mathcal{V}(a) M(t,a) da\right) - ||B \, \mu \, M(t,.)||_{1}  \right| \Big) \nonumber \\
& \leq & k_f \, \Big( \sup_{ x \in [0,K_{C_0}]} |R(x)| + K_{C_0} \, ||\mathcal{V}||_{\infty} \, ||m||_{X_n}  \label{in:M(0)}  \\
& + &  K_{C_0} \, \left( B(0) \, \mathcal{V} (0) \, M(t,0)  + ||B'||_{\infty} \, ||\mathcal{V}||_{\infty} \, ||M||_{X_n}  \right) + ||B||_{\infty} \, ||\mu||_{\infty} \, ||M||_{X_n}  \Big). \nonumber 
\end{eqnarray}
Proposition \ref{proposition:rayon} tells us that $||M||_{X_n} \leq \mathcal{R}_n$ and that $M(.,0)\leq k_1 + k_2 \, \mathcal{R}_n$ is bounded on $[T_{n-1},T_{n}]$. Since $m\in \mathcal{B}_{|| .||_{X_n}}(0, \mathcal{R}_n)$, the assumptions \eqref{hyp:M} and \eqref{hyp:R} make it possible to bound  the right-hand side of \eqref{in:M(0)} on $ [T_{n-1},T_{n}]$. Using now \eqref{eq:majF} and \eqref{in:M(0)} together with the previous considerations allow us to find $K_1, K_2, K_3$ and $K_4$ in $\mathbb{R}_+^*$ depending only on parameters and on $C_0$ such that 
$F'(t)  \leq  K_1 + K_2 \mathcal{R}_n +  ( K_3 + K_4 \mathcal{R}_n) \, F(t)$.
Definition \eqref{eq:defMprime} of $M'$ together with assumption \eqref{hyp:M}, $M_{T_{n-1}} \in W^{1,1} (\mathbb{R}_+)$ and $C_{T_{n-1}}>0$,  yield that 
$F(T_{n-1}) 
%& = & \frac{1}{C_{T_{n-1}}} \left(||M'_{T_{n-1}}||_{1} + ||\mu \, M_{T_{n-1}} ||_{1} \right) \\
\leq  ||\partial_a(\mathcal{V} \, M_{T_{n-1}})||_{1} + \frac{2}{C_{T_{n-1}}}  \, ||\mu \, M_{T_{n-1}} ||_{1}  < + \infty$.
Consequently, with Gronwall's lemma we deduce that 
\begin{equation*}
\sup_{t \in [T_{n-1},T_{n}]} F(t) \leq  F(T_{n-1}) \, e^{\alpha \left(\mathcal{R}_n\right) (T_{n}-T_{n-1})} + K \left(\mathcal{R}_n\right) \, \frac{e^{\alpha \left(\mathcal{R}_n\right) (T_{n}-T_{n-1})} - 1}{\alpha \left(\mathcal{R}_n\right)},
\end{equation*}
with $K \left(\mathcal{R}_n\right) = K_1 + K_2 \mathcal{R}_n $ and $\alpha \left(\mathcal{R}_n\right) = K_3 + K_4 \mathcal{R}_n$.
Defining $\kappa_n$ as 
\begin{equation}\label{eq:defkappa}
\kappa_n := F(T_{n-1}) \, e^{\alpha \left(\mathcal{R}_n\right) (T_{n}-T_{n-1})} + K \left(\mathcal{R}_n\right) \, \frac{e^{\alpha \left(\mathcal{R}_n\right) (T_{n}-T_{n-1})} - 1}{\alpha \left(\mathcal{R}_n\right)},
\end{equation}
and taking the supremum of \eqref{MajW1} on $[T_{n-1},T_{n}]$,  leads to $||\partial_a (\mathcal{V}  M)||_{X_n} \leq \sup_{t \in [T_{n-1},T_{n}]} F(t) \leq \kappa_n$.
\end{proof} 

\subsection{Existence proof}
\noindent

In the previous section, we have set the a priori estimates needed to establish the contraction of $\mathcal{T}_n$ on $X_n$.  We now need to set it on the interval $[T_{n-1},T_{n}]$. In this respect, we define the strictly increasing function: 

\begin{equation}\label{eq:defhdelta}
h_{\delta}(x) = \frac{e^{\delta \, x} - 1}{\delta} \quad \mbox{ for } \delta>0, \, x \in \mathbb{R}_+,
\end{equation}
and the sequence $(T_n)_{n \geq 0}$: 
\begin{equation}\label{T}
T_0 = 0 \textrm{ and } h_{\theta}\left(T_{n} - T_{n-1}\right) \, h_{k_R}\left(T_{n} - T_{n-1}\right) \, K_{C_0} \, ||\mathcal{V}||_{\infty} \, \left( \kappa_n + k_f \, K_{C_0} \, \mathcal{V} (0)  \right)= \frac{1}{2}.
\end{equation}

The following result is immediate.
\begin{lemma}
Assume that the functions $\mu, \, B,\, f, \, \mathcal{V}$ and $R$ satisfy \eqref{hyp:M}, \eqref{hyp:R}. Then the sequence $(T_n)_{n \geq 0}$ is strictly increasing.
\end{lemma}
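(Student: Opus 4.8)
The plan is to read the defining relation \eqref{T} as an equation for the \emph{increment} $\tau := T_n - T_{n-1}$, with $T_{n-1}$ already fixed, and to solve it by the intermediate value theorem. The first thing to keep in mind is that \eqref{T} is genuinely implicit: not only do the factors $h_\theta(\tau)$ and $h_{k_R}(\tau)$ depend on $\tau$, but so does $\kappa_n$, since by \eqref{eq:defkappa} it is assembled from $\mathcal{R}_n$ and from exponentials of $\alpha(\mathcal{R}_n)\,(T_n - T_{n-1})$, and $\mathcal{R}_n$ itself depends on $\tau$ through \eqref{rayon}. So I would make this dependence explicit and introduce
\[
\phi_n(\tau) := h_\theta(\tau)\, h_{k_R}(\tau)\, K_{C_0}\, \|\mathcal{V}\|_{\infty}\, \big(\kappa_n(\tau) + k_f\, K_{C_0}\, \mathcal{V}(0)\big), \qquad \tau \ge 0,
\]
thereby reducing the statement to: there exists $\tau^* > 0$ with $\phi_n(\tau^*) = \tfrac12$. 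Indeed $T_n := T_{n-1} + \tau^*$ then satisfies $T_n > T_{n-1}$, which is exactly strict monotonicity of the sequence.

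Next I would check the three ingredients of the intermediate value theorem. Continuity of $\phi_n$ on $[0,+\infty)$ follows from the explicit formulas \eqref{eq:defhdelta}, \eqref{rayon} and \eqref{eq:defkappa}, since $\mathcal{R}_n(\tau)$, and hence $K(\mathcal{R}_n) = K_1 + K_2 \mathcal{R}_n$, $\alpha(\mathcal{R}_n) = K_3 + K_4 \mathcal{R}_n$ and $\kappa_n(\tau)$, are continuous functions of $\tau$. For the left endpoint, $h_\theta(0) = h_{k_R}(0) = 0$ by \eqref{eq:defhdelta}, while $\kappa_n(\tau)$ stays bounded as $\tau \to 0^+$ (it tends to $F(T_{n-1})$, which is finite because $M_{T_{n-1}} \in W^{1,1}(\mathbb{R}_+)$ and $C_{T_{n-1}} > 0$, exactly as in the proof of Lemma \ref{W1}); hence $\phi_n(0) = 0 < \tfrac12$. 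For the behavior at infinity, each of $h_\theta(\tau)$, $h_{k_R}(\tau)$, $\mathcal{R}_n(\tau)$ and therefore $\kappa_n(\tau)$ tends to $+\infty$ as $\tau \to +\infty$, so $\phi_n(\tau) \to +\infty$. The intermediate value theorem then supplies the required $\tau^* > 0$.

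The argument has no serious obstacle; the only point demanding care is precisely the implicit dependence of $\kappa_n$ and $\mathcal{R}_n$ on the increment one is solving for, which must be tracked so that $\phi_n(0) = 0$ (the vanishing of $h_\theta h_{k_R}$ is not spoiled by a blow-up of $\kappa_n$ at $\tau = 0$) and so that the product still diverges at infinity. Once this is accounted for, everything is monotone: $h_\theta$ and $h_{k_R}$ are strictly increasing by construction, and $\mathcal{R}_n$, $\alpha(\mathcal{R}_n)$, $K(\mathcal{R}_n)$ and $\kappa_n$ are all nondecreasing in $\tau$ by inspection of \eqref{rayon} and \eqref{eq:defkappa}. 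Consequently $\phi_n$ is strictly increasing, which additionally yields uniqueness of $\tau^*$ and shows that \eqref{T} genuinely \emph{well-defines} the sequence $(T_n)_{n \ge 0}$, rather than merely that some admissible increasing choice exists. This monotonicity is the reason the lemma can be asserted as immediate.
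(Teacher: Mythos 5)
Your argument is correct and fills in exactly the reasoning the paper leaves implicit: the paper offers no proof at all (it declares the lemma ``immediate''), and the intended justification is precisely your intermediate-value/monotonicity analysis of the increment equation \eqref{T}, including the observation that $\mathcal{R}_n$ and $\kappa_n$ depend on $\tau = T_n - T_{n-1}$ but remain finite as $\tau \to 0^+$ so that the left-hand side vanishes there and increases to $+\infty$. Your additional remark that strict monotonicity of $\phi_n$ gives uniqueness of the increment, hence well-definedness of the sequence, is a worthwhile strengthening of what the paper asserts.
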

%\begin{proof}
%Using assumptions \eqref{hyp:M} and \eqref{hyp:R}, we deduce that $\theta>0$, $k_R>0$, $k_f >0$, $K_{C_0}>0$ and $\mathcal{V} (0) >0$. Since $h_{\delta}$ is strictly increasing in $\mathbb{R}_+$, the definition of sequence $(T_n)_{n\geq 0}$ is then well-posed and strictly increasing.
%\end{proof}

%In Proposition \ref{proposition:existencefac}, the contraction holds on the whole space $X_n$. In the following result, we establish the contraction on a closed ball of $X_n$. 

In order to find a fixed point, we prove in the following result that $\mathcal{T}_n$ is a contraction mapping in $\mathcal{B}_{|| .||_{X_n}}(0, \mathcal{R}_n)$. 

\begin{proposition}\label{th:existN}
Under assumptions \eqref{hyp:M} and \eqref{hyp:R}, there exists a unique solution $(M,C)$ in $X_n \times Y_n$ to system \eqref{Pbcouple} with the initial condition $\left(M_{T_{n-1}},C_{T_{n-1}}\right) \in W^{1,1} (\mathbb{R}_+) \times ]0,K_{C_0}]$. 
\end{proposition}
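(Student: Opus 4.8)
The plan is to prove Proposition \ref{th:existN} by a Banach fixed-point argument applied to the operator $\mathcal{T}_n$ on the closed ball $\mathcal{B}_{\|\cdot\|_{X_n}}(0,\mathcal{R}_n)$ of the Banach space $X_n$. The a priori bounds of the previous subsection already do the heavy lifting: Proposition \ref{proposition:C} guarantees that $\mathcal{U}_n(m)$ is well-defined, strictly positive and bounded by $K_{C_0,k}$, while Proposition \ref{proposition:existencefac} guarantees that $M=\mathcal{T}_n(m)$ is well-defined for each $m$. Lemma \ref{proposition:rayon} shows that $\mathcal{T}_n$ maps the ball into itself, i.e. $\|\mathcal{T}_n(m)\|_{X_n}\leq \mathcal{R}_n$, so $\mathcal{T}_n$ is a self-map of the ball. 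What remains is the contraction estimate, after which existence and uniqueness on $[T_{n-1},T_n]$ follow from the contraction mapping theorem; uniqueness of the full pair $(M,C)$ then comes by noting that $C=\mathcal{U}_n(m)$ is uniquely determined by $m$.

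First I would take two elements $m_1,m_2\in\mathcal{B}_{\|\cdot\|_{X_n}}(0,\mathcal{R}_n)$ and set $C_i=\mathcal{U}_n(m_i)$, $M_i=\mathcal{T}_n(m_i)$. The first step is to control $\|C_1-C_2\|_{Y_n}$ by $\|m_1-m_2\|_{X_n}$: subtracting the two copies of \eqref{Aprioribound1}, using that $R$ is locally Lipschitz (on the compact range $]0,K_{C_0,k}]$ supplied by Proposition \ref{proposition:C}), that $C_i$ and $\mathcal{V}$ are bounded, and that the $m_i$ lie in the ball, one gets a linear Gronwall inequality whose solution yields $\|C_1-C_2\|_{Y_n}\leq c\,h_{k_R}(T_n-T_{n-1})\,\|m_1-m_2\|_{X_n}$ for a constant $c$ depending only on the data; this is where the factor $h_{k_R}$ in the definition \eqref{T} of $T_n$ originates. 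The second step is the analogous estimate for $M_1-M_2$. Writing the equation \eqref{Aprioribound2} for $M_1$ and $M_2$ and subtracting, the difference $M_1-M_2$ solves a transport equation with the same boundary condition structure but with two sources of discrepancy: the boundary term, controlled by the $k_f$-Lipschitz property of $f$ and the Lipschitz property of $B$, and the drift term $C_i\,\partial_a(\mathcal{V}M_i)$, where both the coefficient $C_i$ and the transported quantity differ.

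The main obstacle is exactly this drift term, since the nonlinearity sits on the advection field. Splitting $C_1\partial_a(\mathcal{V}M_1)-C_2\partial_a(\mathcal{V}M_2)=C_1\partial_a(\mathcal{V}(M_1-M_2))+(C_1-C_2)\partial_a(\mathcal{V}M_2)$, the second piece is the key: it forces one to control $\|\partial_a(\mathcal{V}M_2)\|_{X_n}$, and this is precisely what Lemma \ref{W1} provides, giving the bound $\kappa_n$. Integrating the equation for $|M_1-M_2|$ in $a$ (using the same $I_\delta$-regularization / distributional argument as in Lemma \ref{W1} to justify taking absolute values) and applying Gronwall yields an estimate of the form
\begin{equation*}
\|M_1-M_2\|_{X_n}\leq h_\theta(T_n-T_{n-1})\,K_{C_0}\,\|\mathcal{V}\|_\infty\,\bigl(\kappa_n+k_f\,K_{C_0}\,\mathcal{V}(0)\bigr)\,\|C_1-C_2\|_{Y_n}+(\text{lower order}),
\end{equation*}
where the $h_\theta$ factor comes from the homogeneous transport growth rate $\theta$. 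Combining this with the first-step bound on $\|C_1-C_2\|_{Y_n}$ produces the product $h_\theta(T_n-T_{n-1})\,h_{k_R}(T_n-T_{n-1})\,K_{C_0}\,\|\mathcal{V}\|_\infty(\kappa_n+k_f K_{C_0}\mathcal{V}(0))$, which by the defining relation \eqref{T} equals $\tfrac12$. Hence $\|M_1-M_2\|_{X_n}\leq \tfrac12\|m_1-m_2\|_{X_n}$, so $\mathcal{T}_n$ is a contraction. The fixed point $M$ together with $C=\mathcal{U}_n(M)$ is then the unique solution in $X_n\times Y_n$, which is the claim.
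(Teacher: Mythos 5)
Your proposal is correct and follows essentially the same route as the paper: a contraction argument on $\mathcal{B}_{\|\cdot\|_{X_n}}(0,\mathcal{R}_n)$, splitting the drift difference so that Lemma \ref{W1} controls the term $(\text{difference of }C)\cdot\partial_a(\mathcal{V}M_i)$ via $\kappa_n$, chaining the Lipschitz estimate for $\mathcal{U}_n$ (factor $h_{k_R}$) with the Gronwall estimate for the transport part (factor $h_\theta$), and invoking the defining relation \eqref{T} to get the contraction constant $\tfrac12$. The only differences are cosmetic (which index carries the $\kappa_n$ bound, the order of the two estimates, and where the constant $K_{C_0}\|\mathcal{V}\|_\infty$ is attached), so no further comment is needed.
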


\begin{proof}
 Let $m_1, m_2 \in \mathcal{B}_{|| .||_{X_n}}(0, \mathcal{R}_n)$. Let us define $\mathbf{m}=m_1-m_2$, $\mathbf{M} = \mathcal{T}_n(m_1) - \mathcal{T}_n(m_2)=M_1-M_2$, $\mathbf{C} = \mathcal{U}_n(m_1) - \mathcal{U}_n(m_2)=C_1-C_2$, then  
\[
\left\{\begin{array}{l}
\partial_t \mathbf{M}(t, a) +  \mathbf{C} (t) \, \partial_a (\mathcal{V}(a) \, M_1 (t, a)) + C_2 (t) \, \partial_a (\mathcal{V}(a) \, \mathbf{M}(t, a)) + \mu (a) \, \mathbf{M}(t, a) = 0, \\
\mathbf{M} (t,0) = f \left(C_1(t),\int_{0}^{+ \infty } B(a) \, M_1 (t,a) \, da \right) - f \left(C_2(t),\int_{0}^{+ \infty } B(a) \, M_2 (t,a) \, da \right), \\
\mathbf{C}'(t)=R(C_1(t)) - R(C_2(t)) - \mathbf{C}(t) \, \int_{0}^{+ \infty } \mathcal{V}(a) \, m_1 (t,a) \, da - C_2 (t) \, \int_{0}^{+ \infty } \mathcal{V}(a) \, \mathbf{m}(t,a) \, da, \\
\mathbf{M} (T_{n-1},.) = \mathbf{C}(T_{n-1})=0.
\end{array}\right.
\]
As in the proof of Lemma \ref{W1}, using the function $I_{\delta}$, it yields that
$\partial_t |\mathbf{M}(t, a)| + C_2 (t) \, \partial_a (\mathcal{V}(a)  |\mathbf{M}(t, a) |) + \mu (a) \,|\mathbf{M}(t, a)| \leq |\mathbf{C} (t)| \, |\partial_a (\mathcal{V}(a)  M_1 (t, a))|$,
hence after space integration:
\[ \frac{d}{dt} ||\mathbf{M} (t, .)||_{1} + ||\mu \, \mathbf{M}(t, .)||_{1} \leq | \mathbf{C} (t) | \, || \partial_a (\mathcal{V} \, M_1 (t, .))||_{1} 
+ C_2 (t) \, \mathcal{V}(0) \, \left|\mathbf{M}(t,0)\right|.\] 
Recalling assumption \eqref{hyp:M} 
%made on $f$, we can write $\left|\mathbf{M}(t,0)\right| \leq k_f \,(|\mathbf{C}(t)|+  ||B||_{\infty} \, ||\mathbf{M}(t, .)||_{1})$, hence
%$ \frac{d}{dt} ||\mathbf{M} (t, .)||_{1} \leq | \mathbf{C} (t) | \, || \partial_a (\mathcal{V} \, M_1 (t, .))||_{1} 
%+ C_2 (t) \, \mathcal{V}(0) \, k_f \,(|\mathbf{C}(t)|+  ||B||_{\infty} \, ||\mathbf{M}(t, .)||_{1}),$
and definition \eqref{eq:deftheta} of $\theta$, it follows that
$$\frac{d}{dt} ||\mathbf{M}(t, .)||_{1} - \theta \, ||\mathbf{M}(t, .)||_{1}  \leq  | \mathbf{C} (t) | \, \, \Big( || \partial_a (\mathcal{V} M_1 (t, .))||_{1} + k_f \, K_{C_0} \, \mathcal{V}(0) \Big).$$
Lemma \ref{W1} provides bounds on $|| \partial_a (\mathcal{V} M_1 (t, .))||_{1}$ 
%and we see that 
%$\frac{d}{dt} ||\mathbf{M}(t, .)||_{1} - \theta \, ||\mathbf{M}(t, .)||_{1}  \leq  | \mathbf{C} (t) | \, \, \Big( \kappa_n + k_f \, K_{C_0} \, \mathcal{V}(0) \Big)$.
and Gronwall's lemma together with $||\mathbf{M}(T_{n-1}, .)||_{1}=0$ gives that
$||\mathbf{M}(t, .)||_{1}  \leq  \Big( \kappa_n + k_f \, K_{C_0} \, \mathcal{V}(0) \Big) \, \int_{T_{n-1}}^t | \mathbf{C} (s) | \, e^{\theta (t-s)} \, ds$, hence taking the supremum on $[T_{n-1},T_{n}]$, one gets
\[||\mathcal{T}_n(m_1) - \mathcal{T}_n(m_2)||_{X_n}  \leq  \left( \kappa_n + k_f \, K_{C_0} \, \mathcal{V} (0)  \right) \, \frac{e^{\theta \, (T_{n} - T_{n-1})} - 1}{\theta} \,  ||\mathcal{U}_n(m_1) - \mathcal{U}_n(m_2)||_{Y_n}.\]

Let us now prove that $\mathcal{U}_n$ is locally Lipschitz. 
%First we know that
%$\mathbf{C}'(t) = R(C_1 (t)) - R(C_2 (t)) - \mathbf{C}(t) \, \int_{0}^{+ \infty } \mathcal{V}(a) \, m_1 (t,a) \, da - C_2 (t) \, \int_{0}^{+ \infty } \mathcal{V}(a) \, \mathbf{m}(t,a) \, da$,
Recalling the definition of  $\mathbf{C}$ and using the function $I_{\delta}$ defined in the proof of Lemma \ref{W1}, one gets 
\[\dfrac{d}{dt} |\mathbf{C} (t)| + |\mathbf{C}(t)| \, \int_{0}^{+ \infty } \mathcal{V}(a) \, m_1 (t,a) \, da \leq |R(C_1 (t)) - R(C_2 (t))| +C_2 (t) \,|| \mathcal{V} \, \mathbf{m}(t,.) ||_{1}.\] 
Proposition \ref{proposition:C} tells us that  for all $t \in [T_{n-1},T_n]$, $C_i(t) \leq K_{C_0}$, $i=1$ or $2$, hence using assumption \eqref{hyp:R}, we obtain that $\dfrac{d}{dt} |\mathbf{C} (t)| - k_R \, |\mathbf{C}(t)| \leq K_{C_0} \, ||\mathcal{V}||_{\infty} \, || \mathbf{m}(t,.) ||_{1}$,
which leads, after time integration, to
\[||\mathcal{U}_n(m_1) - \mathcal{U}_n(m_2)||_{Y_n} \leq K_{C_0} \, ||\mathcal{V}||_{\infty} \,  \frac{e^{ k_R  \, (T_{n} - T_{n-1})} - 1}{k_R}   \, || m_1 - m_2 ||_{X_n}.\]

Recalling next (\ref{T}), we deduce that
\begin{equation*}
|| \mathcal{T}_n(m_1)-\mathcal{T}_n(m_2) ||_{X_n} \leq \frac{1}{2} \, || m_1 - m_2 ||_{X_n},
\end{equation*}
hence, $\mathcal{T}_n$ is a contraction mapping on $\mathcal{B}_{|| .||_{X_n}}(0, \mathcal{R}_n)$, which proves the existence of a unique fixed point $M=\mathcal{T}_n (M) \in \mathcal{B}_{|| .||_{X_n}}(0, \mathcal{R}_n)$. Moreover, from Lemma \ref{proposition:rayon}, for all $m\in X_n$,  we know that  $\mathcal{T}_n (m) \in \mathcal{B}_{|| .||_{X_n}}(0, \mathcal{R}_n)$, hence  there exists a unique fixed point $M=\mathcal{T}_n (M) \in X_n$. Proposition \ref{proposition:C} provides existence and uniqueness of $C=\mathcal{U}_n (M) \in Y_n$.
\end{proof}

With Lemma \eqref{W1}, we have for all $n \in \mathbb{N}$, $\mathcal{T}_n (M)(T_n,.) \in W^{1,1}(\mathbb{R}_+,\mathbb{R}_+)$, the following result easily follows by induction. 

\begin{corollary}
Under assumptions \eqref{hyp:M} and \eqref{hyp:R}, there exists a unique solution $(M,C)$ in $C^0 \left(I;L^1 (\mathbb{R}_+,\mathbb{R}_+) \right) \times C^0 \left(I\right)$ to system (\ref{Pbcouple}) with (\ref{CI}), where $I=\bigcup_{n \geq 1} [T_{n-1},T_n]=[0, \lim_{n \rightarrow + \infty} T_n[$.
\end{corollary}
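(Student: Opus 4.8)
The plan is to build the solution on $I$ by gluing the local solutions produced by Proposition \ref{th:existN} on the successive sub-intervals $[T_{n-1},T_n]$, the gluing being organised as an induction on $n$ whose invariant is that the terminal value of the solution at $T_n$ is again an admissible initial datum, namely $\big(M(T_n,\cdot),C(T_n)\big)\in W^{1,1}(\R_+)\times\,]0,K_{C_0}]$. This invariant is exactly what allows Proposition \ref{th:existN} to be applied on the next interval, so the whole argument reduces to checking that each step reproduces the hypotheses of the next.

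For the base case $n=1$ the datum $(M_0,C_0)$ is admissible by \eqref{CI} and \eqref{hyp:R}, so Proposition \ref{th:existN} yields a unique $(M,C)$ on $[T_0,T_1]=[0,T_1]$. For the inductive step I would assume $(M,C)$ has been constructed and shown unique on $[0,T_{n-1}]$ with $\big(M(T_{n-1},\cdot),C(T_{n-1})\big)\in W^{1,1}(\R_+)\times\,]0,K_{C_0}]$, feed this terminal datum into Proposition \ref{th:existN} on $[T_{n-1},T_n]$, and then verify that the new terminal value at $T_n$ is again admissible. The bound $C(T_n)\in\,]0,K_{C_0}]$ together with the strict positivity is immediate from Proposition \ref{proposition:C}. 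The spatial regularity $M(T_n,\cdot)\in W^{1,1}(\R_+)$ is supplied by Lemma \ref{W1}, which controls $\|\partial_a(\mathcal{V}M)\|_{X_n}$ uniformly in time; combined with the $L^1$ bound of Lemma \ref{proposition:rayon} and the fact that $\mathcal{V}$ is positive and Lipschitz, this restores the invariant and closes the induction.

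Once the invariant holds for all $n$, I would define $(M,C)$ on $I=\bigcup_{n\geq1}[T_{n-1},T_n]$ by declaring it equal to the $n$-th local solution on $[T_{n-1},T_n]$. This is well defined because at each junction $T_n$ the terminal value of one piece is, by construction, the initial value of the next; since each piece lies in $C^0([T_{n-1},T_n];L^1(\R_+))\times C^0([T_{n-1},T_n])$ and the pieces match at the $T_n$, the patched map is continuous on $I$ and solves \eqref{Pbcouple} with \eqref{CI}. Uniqueness on $I$ follows by the same inductive mechanism: any two solutions restrict to solutions on each $[T_{n-1},T_n]$, and the uniqueness part of Proposition \ref{th:existN} forces them to agree first on $[0,T_1]$, hence at $T_1$, hence on $[T_1,T_2]$, and so on.

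The step I expect to be the main obstacle is the propagation of regularity, i.e. confirming that each terminal datum is admissible for the next sub-interval. Proposition \ref{proposition:C} settles the $C$-component, but for $M$ one only controls $\partial_a(\mathcal{V}M)$ in $L^1$ through Lemma \ref{W1}, so recovering $M(T_n,\cdot)\in W^{1,1}(\R_+)$ must exploit the positivity and Lipschitz character of $\mathcal{V}$ (and $\mathcal{V}(0)>0$). A further point worth flagging is that the statement only asserts existence on $I=[0,\lim_n T_n[$ and not on all of $\R_+$: since the step sizes $T_n-T_{n-1}$ fixed by \eqref{T} may shrink as $\kappa_n$ and $\mathcal{R}_n$ grow, the finiteness or infiniteness of $\lim_n T_n$ is a separate matter that this gluing argument alone does not address.
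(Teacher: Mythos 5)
Your proposal is correct and follows essentially the same route as the paper, which also obtains the corollary by induction on $n$: Lemma \ref{W1} guarantees $\mathcal{T}_n(M)(T_n,\cdot)\in W^{1,1}(\mathbb{R}_+,\mathbb{R}_+)$, so the terminal datum at each $T_n$ is admissible for Proposition \ref{th:existN} on the next sub-interval, and the local solutions glue into a unique solution on $I$. Your closing remark is also consistent with the paper, which treats the question $\lim_n T_n=+\infty$ separately in Proposition \ref{prop:global}.
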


We now prove global existence for the solution to \eqref{Pbcouple} with \eqref{CI}, this will end the proof of Theorem \ref{th:global}.

\begin{proposition}\label{prop:global} 
Under assumptions \eqref{hyp:M} and \eqref{hyp:R}, the sequence $(T_n)_{n \geq 0}$ defined by \eqref{T} verifies
$\lim_{n \rightarrow + \infty} T_n = + \infty$.
\end{proposition}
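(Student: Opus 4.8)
The plan is to argue by contradiction. Since $(T_n)_{n\ge 0}$ is strictly increasing it admits a limit $T_\infty := \lim_{n\to\infty} T_n \in (0,+\infty]$, and I suppose for contradiction that $T_\infty < +\infty$. Then the increments $\tau_n := T_n - T_{n-1}$ satisfy $\sum_{n} \tau_n = T_\infty < \infty$, so in particular $\tau_n \to 0$. The whole strategy consists in showing that, under this assumption, the sequence $(\kappa_n)$ entering the definition \eqref{T} stays \emph{bounded}; once this is established a contradiction is immediate, since the left-hand side of \eqref{T} would then tend to $0$ while being identically equal to $\tfrac12$.

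First I would record a uniform a priori bound on $[0,T_\infty)$. The differential inequality $\frac{d}{dt}\|M(t,.)\|_{1} \le \theta\,\|M(t,.)\|_{1} + K_{C_0}\,\mathcal{V}(0)\sup_{x\in[0,K_{C_0}]}f(x,0)$ obtained in the proof of Lemma \ref{proposition:rayon} is in fact valid on the whole existence interval $[0,T_\infty)$, and not merely on a single subinterval, because all the constants it involves are global. As this interval is finite, Gronwall's lemma yields a finite bound $\|M(t,.)\|_{1} \le \bar R$ for all $t\in[0,T_\infty)$. Hence $\|M_{T_{n-1}}\|_{1} \le \bar R$ for every $n$, and since the $\tau_n$ are bounded, the definition \eqref{rayon} gives a uniform bound $\mathcal{R}_n \le \bar{\mathcal{R}}<\infty$. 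As $\alpha(\mathcal{R}_n)=K_3+K_4\mathcal{R}_n$ and $K(\mathcal{R}_n)=K_1+K_2\mathcal{R}_n$ are affine in $\mathcal{R}_n$, they are in turn dominated by constants $\bar\alpha$ and $\bar K$ independent of $n$, with $\bar\alpha \ge K_3 >0$.

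Next I would propagate the $W^{1,1}$ estimate across the subintervals to bound $\kappa_n$ uniformly. The key point is that, by Lemma \ref{W1}, $\kappa_n$ dominates $\sup_{[T_{n-1},T_n]}F$, so in particular $F(T_n)\le\kappa_n$; since $F(T_n)$ is exactly the initial value of $F$ for the subinterval $[T_n,T_{n+1}]$, one has $F(T_{n-1})\le\kappa_{n-1}$ for every $n$, with $\kappa_0:=F(0)<\infty$ coming from $M_0\in W^{1,1}(\R_+)$ and $C_0>0$. Using the monotonicity of $\delta\mapsto h_\delta(\tau)$ and of $\alpha\mapsto \frac{e^{\alpha\tau}-1}{\alpha}$, the formula \eqref{eq:defkappa} then gives $\kappa_n \le \kappa_{n-1}\,e^{\bar\alpha\tau_n} + \bar K\,h_{\bar\alpha}(\tau_n)$. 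Setting $G_n := \kappa_n + \bar K/\bar\alpha$ linearises this into $G_n \le G_{n-1}\,e^{\bar\alpha\tau_n}$, which telescopes to $G_n \le G_0\,e^{\bar\alpha\sum_{j\le n}\tau_j} = G_0\,e^{\bar\alpha T_n}\le G_0\,e^{\bar\alpha T_\infty}$. Therefore $\kappa_n \le \bar\kappa := G_0\,e^{\bar\alpha T_\infty}$ for all $n$.

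Finally, inserting $\kappa_n\le\bar\kappa$ into \eqref{T} forces $h_\theta(\tau_n)\,h_{k_R}(\tau_n) = \big(2\,K_{C_0}\,\|\mathcal{V}\|_\infty(\kappa_n + k_f K_{C_0}\mathcal{V}(0))\big)^{-1} \ge \big(2\,K_{C_0}\,\|\mathcal{V}\|_\infty(\bar\kappa + k_f K_{C_0}\mathcal{V}(0))\big)^{-1} > 0$, a strictly positive constant independent of $n$. But $h_\theta$ and $h_{k_R}$ vanish at $0$, so $h_\theta(\tau_n)h_{k_R}(\tau_n)\to 0$ as $\tau_n\to0$, a contradiction; hence $T_\infty = +\infty$. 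I expect the main obstacle to be the third step: one must ensure that the constants $\bar\alpha,\bar K$ genuinely do not deteriorate along the infinitely many, shrinking subintervals — which is exactly what the uniform $L^1$ bound of the second step guarantees — and that the recursion $F(T_{n-1})\le\kappa_{n-1}$ is set up with the right indices so that the telescoping accumulates only the finite total time $T_\infty$ in the exponent.
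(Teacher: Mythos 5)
Your proposal is correct and follows essentially the same route as the paper: argue by contradiction from $T_\infty<\infty$, use the uniform $L^1$ bound to control $\mathcal{R}_n$ and hence the constants $\alpha(\mathcal{R}_n),K(\mathcal{R}_n)$, propagate $F(T_{n-1})\le\kappa_{n-1}$ through the recursion to bound $\kappa_n$ uniformly (your telescoping via $G_n=\kappa_n+\bar K/\bar\alpha$ is exactly the paper's Lemma \ref{lem:recurrence}), and then contradict \eqref{T} since $h_\theta(\tau_n)h_{k_R}(\tau_n)\to 0$ while it must stay bounded below. The only cosmetic difference is that the paper phrases the final contradiction through the expansion $h_\delta(t_n)=t_n+\circ(t_n)$, which is equivalent to your lower-bound argument.
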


\begin{proof}
We proceed in two steps. First, by induction, it is easy to prove this useful result.

\begin{lemma}\label{lem:recurrence}
Let $(u_n)_{n\geq 0}$ be a sequence satisfying the following inequality for $n \geq 1$, 
$u_n \leq e^{\alpha \,( T_{n} - T_{n-1})} \, u_{n-1} + K \, \frac{e^{\alpha \,( T_{n} - T_{n-1})} - 1}{\alpha}, \mbox{ with } \alpha,K >0$, 
then for $n \geq 0$
\begin{equation}\label{recurrence}
u_n \leq e^{\alpha \, T_{n}} \, u_{0} + K \, \frac{e^{\alpha \, T_{n}} - 1}{\alpha}.
\end{equation}
\end{lemma}

Proposition \ref{proposition:rayon} gives that $||M||_{X_{n-1}} \leq \mathcal{R}_{n-1}$, hence $||M(T_{n-1},.)||_1 = ||M_{T_{n-1}}||_1 \leq \mathcal{R}_{n-1}$. Recalling definition \eqref{rayon} of the radius $\mathcal{R}_{n}$, one gets
\[\mathcal{R}_n \leq e^{\theta (T_{n}-T_{n-1})} \, \mathcal{R}_{n-1} + K_{C_0} \, \mathcal{V}(0) \, \sup_{x \in [0,K_{C_0}]} f \left(x, 0 \right) \frac{e^{\theta (T_{n}-T_{n-1})} - 1}{\theta},\]
hence, using Lemma \ref{lem:recurrence} together with $\mathcal{R}_{0} = ||M_0||_1$
\[\mathcal{R}_n \leq e^{\theta T_{n}} \, ||M_0||_1 + K_{C_0} \, \mathcal{V}(0) \, \sup_{x \in [0,K_{C_0}]} f \left(x, 0 \right) \, \frac{e^{\theta T_{n}} - 1}{\theta}.\]

In a second step, we argue by contradiction assuming that $I$ is a bounded interval.

\begin{lemma}\label{kappan}
Assume that $\lim_{n \rightarrow + \infty} T_n = T_{max} < + \infty$, then there exist $\mathcal{K}_1,\mathcal{K}_2>0$ such that  $\mathcal{R}_n \leq \mathcal{K}_1$ and $ \kappa_n \leq \mathcal{K}_2$, for all $n \geq 0$.
\end{lemma}

\begin{proof}
Recalling definition \eqref{eq:defkappa} of $\kappa_n$ and Lemma \ref{W1}, one gets 
\[||\partial_a(\mathcal{V} M)||_{X_{n-1}} \leq \sup_{t \in [T_{n-2},T_{n-1}]} F(t) \leq \kappa_{n-1},\] 
hence, $F(T_{n-1}) \leq \kappa_{n-1}$ and 
$\kappa_n  \leq  \kappa_{n-1} \, e^{\alpha\left(\mathcal{R}_n\right) (T_{n}- T_{n-1})} + K \left(\mathcal{R}_n\right) \frac{e^{\alpha\left(\mathcal{R}_n\right) (T_{n} - T_{n-1})}-1}{\alpha\left(\mathcal{R}_n\right)}$.
Since $\lim_{n \rightarrow + \infty} T_n = T_{max} < + \infty$,  one has
$\alpha\left(\mathcal{R}_n\right) \leq K_5 $ and $ K\left(\mathcal{R}_n\right) \leq K_6$ for all $n \geq 0$.
The function $x \mapsto \frac{e^{x (T_{n} - T_{n-1})}-1}{x}$ is increasing on $\mathbb{R}_+^*$, therefore
$\kappa_n  \leq  \kappa_{n-1} \, e^{K_5 (T_{n}- T_{n-1})} + K_6 \frac{e^{K_5 (T_{n} - T_{n-1})}-1}{K_5}$.
The initial condition \eqref{CI} gives that $\kappa_{0}=F(0) < + \infty$, the result then follows from Lemma \ref{lem:recurrence}.
\end{proof}

If $\lim_{n \rightarrow + \infty} T_n = T_{max} < + \infty$, then $t_n=T_{n} - T_{n-1}$ is the positive term of a convergent series, hence $t_n \rightarrow 0$. For $\delta>0$,  using definition \eqref{eq:defhdelta} of $h_{\delta}$, one sees that  $h_{\delta} (t_n) = t_n + \circ (t_n) \mbox{ as } n \rightarrow + \infty$.  Consequently, recalling Lemma \ref{kappan} and \eqref{T}, one can find $\mathcal{K} >0$ s.t. 
\begin{eqnarray*}
\frac{1}{2} & = & \left( \kappa_n + k_f \, K_{C_0} \, \mathcal{V} (0)  \right) \, t_n ^2 \, K_{C_0} \, ||\mathcal{V}||_{L^{\infty}(\mathbb{R}_+)} + \circ (t_n^2) \\
& \leq & \mathcal{K} \, t_n^2 + \circ (t_n^2), \quad \mbox{ when } n \rightarrow + \infty.
\end{eqnarray*}
Since $t_n \rightarrow 0$, the last inequality is absurd.
\end{proof}

%%%%%%%%%%%%%%%%%%%%%%%%%%%%%%%%%%%%%%%%%%%%%%%%%%%%%%%%%%%%%%%%%%%%%%%%%%%%%%%%%%%%%%%%%%%%%%%%%%%%%%%%%%%%
%%%%%%%%%%%%%%%%%%%%%%%%%%%%%%%%%%%%%%%%%%%%%%%%%%%%%%%%%%%%%%%%%%%%%%%%%%%%%%%%%%%%%%%%%%%%%%%%%%%%%%%%%%%%
%%%%%%%%%%%%%%%%%%%%%%%%%%%%%%%%%%%%%%%%%%%%%%%%%%%%%%%%%%%%%%%%%%%%%%%%%%%%%%%%%%%%%%%%%%%%%%%%%%%%%%%%%%%%
%%%%%%%%%%%%%%%%%%%%%%%%%%%%%%%%%%%%%%%%%%%%%%%%%%%%%%%%%%%%%%%%%%%%%%%%%%%%%%%%%%%%%%%%%%%%%%%%%%%%%%%%%%%%
%%%%%%%%%%%%%%%%%%%%%%%%%%%%%%%%%%%%%%%%%%%%%%%%%%%%%%%%%%%%%%%%%%%%%%%%%%%%%%%%%%%%%%%%%%%%%%%%%%%%%%%%%%%%
%%%%%%%%%%%%%%%%%%%%%%%%%%%%%%%%%%%%%%%%%%%%%%%%%%%%%%%%%%%%%%%%%%%%%%%%%%%%%%%%%%%%%%%%%%%%%%%%%%%%%%%%%%%%
%%%%%%%%%%%%%%%%%%%%%%%%%%%%%%%%%%%%%%%%%%%%%%%%%%%%%%%%%%%%%%%%%%%%%%%%%%%%%%%%%%%%%%%%%%%%%%%%%%%%%%%%%%%%
%%%%%%%%%%%%%%%%%%%%%%%%%%%%%%%%%%%%%%%%%%%%%%%%%%%%%%%%%%%%%%%%%%%%%%%%%%%%%%%%%%%%%%%%%%%%%%%%%%%%%%%%%%%%

\section{Long time asymptotics for \eqref{Pbcouple} with \eqref{ex_f_2} or \eqref{ex_f_3}}
%low inflammatory lesions}
\noindent

In this part we study the long time asmptotics in the case of a plaque with no self reinforcement in monocyte recruitment. We assume that either $M(t,0)= \alpha C(t)$ or $M(t,0)=  \int B(a) M(t,a) da$. We will consider the two cases for the endothelium state: no injury,  $R(C)=\beta C$, and an injury due either to  wall-shear or to a high concentration of LDL in the intima, $R(C)= \gamma -\beta C$.

We will prove that in the normal endothelium case, we observe a periodic behaviour which can be biologically interpreted as plaque erodation, rupture and repair without any clinical consequence. This kind of behaviour is observed in real cases \cite{Virmani2004}.

%In the case of a disturbed wall-shear stress we will observe that .... \textbf{****A completer****}   

We first study the stationary state associated with \eqref{Pbcouple}. Then, in the particular cases of \eqref{ex_f_2} or \eqref{ex_f_3} we provide some numerical steady state profiles and we study the long-time asymptotics. 
%\textbf{****Detailler l'essence de la preuve****}

\subsection{Stationary state}
\noindent

%In this part we are interested in the stationary state \textbf{****Verifier quelles sont les hypotheses utilisees pour le calcul qui suit****}

We first prove the existence of a steady state associated with \eqref{Pbcouple} under assumptions \eqref{hyp:M} and \eqref{hyp:R}.

\subsubsection{Stationary problem}
\noindent

Let us first recall the stationary problem associated with system \eqref{Pbcouple}: find $(M^*,C^*) \in L^1(\mathbb{R}_+; (1+ \mathcal{V} (a)) \, da) \times \mathbb{R}_+ $ solution of 
\begin{equation}\label{steady}
\left\{\begin{array}{l}
C^* \, \partial_a (\mathcal{V}(a) \, M^*(a)) + \mu (a) \, M^*(a) = 0, \\
M^*(0) = f \left(C^*,\int_{0}^{+ \infty } B(a) \, M^* (a) \, da \right), \\
R(C^*) - C^* \, \int_{0}^{+ \infty } \mathcal{V}(a) \, M^*(a) \, da = 0.
\end{array}\right.
\end{equation}

\begin{lemma}\label{lem:zerosteady}
Under assumptions  \eqref{hyp:M} and \eqref{hyp:R}, the stationary problem \eqref{steady} admits $(0,0)$ for solution iff $R(0)=0$ and $f(0,0)=0$.
\end{lemma}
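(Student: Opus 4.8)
The plan is to establish the equivalence by direct substitution into the three equations of \eqref{steady}, exploiting the fact that at the candidate point $(M^*,C^*)=(0,0)$ every nonlocal (integral) term as well as the advection term vanishes identically, so that only two scalar constraints survive.

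For the forward implication I would assume that $(0,0)$ solves \eqref{steady} and read off the three equations one by one. With $M^*\equiv 0$ the product $\mathcal{V}(a)\,M^*(a)$ is identically zero, so its $a$-derivative vanishes; together with $\mu(a)\cdot 0=0$ and the factor $C^*=0$, the first (transport) equation is satisfied automatically and imposes no condition. The boundary condition in the second line becomes $0=f\!\left(0,\int_0^{+\infty}B(a)\cdot 0\,da\right)=f(0,0)$, which forces $f(0,0)=0$. Finally the third line becomes $R(0)-0\cdot\int_0^{+\infty}\mathcal{V}(a)\cdot 0\,da=R(0)=0$, which forces $R(0)=0$.

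For the converse I would simply assume $R(0)=0$ and $f(0,0)=0$ and verify that $(0,0)$ satisfies all three equations: the first holds trivially as above, the second reduces to $0=f(0,0)=0$, and the third reduces to $R(0)=0$. Hence $(0,0)\in L^1(\mathbb{R}_+;(1+\mathcal{V}(a))\,da)\times\mathbb{R}_+$ is a solution of \eqref{steady}, which closes the equivalence.

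There is no genuine obstacle here: the only point requiring (minimal) care is to confirm that the advection term $C^*\,\partial_a(\mathcal{V}(a)\,M^*(a))$ and both integral functionals are well defined and vanish at $(0,0)$, which is immediate since $M^*\equiv 0$ is smooth and $C^*=0$. In fact the regularity assumptions \eqref{hyp:M} on $\mathcal{V},B,\mu,f$ and \eqref{hyp:R} on $R$ are not needed beyond guaranteeing that the expressions appearing in \eqref{steady} are meaningful; the content of the lemma is purely the bookkeeping that $(0,0)$ leaves exactly the boundary value $f(0,0)$ and the reaction value $R(0)$ as the sole surviving terms.
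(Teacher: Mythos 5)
Your proof is correct and follows essentially the same route as the paper: direct substitution of $(M^*,C^*)=(0,0)$ into the three equations of \eqref{steady}, which reduces the system to the two scalar conditions $f(0,0)=0$ and $R(0)=0$. The paper's one-line proof additionally notes that $C^*=0$ forces $M^*=0$ (via the positivity of $\mu$ in the first equation), but this extra observation is not needed for the statement as written and does not change the argument.
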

\begin{proof}
First we observe that if $C^*=0$, then $M^*=0$, hence $(0,0)$ is solution of \eqref{steady} if and only if $R(0)=0$ and $f(0,0)=0$. 
\end{proof} 
Let us assume that $C^*>0$, then the solution $M^*$ of $C^* \, \partial_a (\mathcal{V}(a) \, M^*(a)) + \mu (a) \, M^*(a) = 0$ is
%admits a solution $M^*\in  L^1(\mathbb{R}_+; (1+ \mathcal{V} (a)) \, da)$ given by 
\begin{equation}\label{eq:steadyM}
M^*(a) = M^*(0)\, \frac{\mathcal{V}(0)}{\mathcal{V}(a)} \, e^{- \frac{1}{C^*}  \int_{0}^{a} \frac{\mu(a')}{\mathcal{V}(a')} da'}\, ,
\end{equation}
hence, $M^*(a) >0$ as soon as $M^*(0)>0$.

\begin{lemma}\label{prop:uniquesteady}
%Under assumptions \ref{hyp:Mbis} - \ref{hyp:add} and 
Assuming that $M(t,0)=f(C(t))$ with $f$ strictly increasing and that $R(C(t))=\gamma - \beta \, C(t)$  with $\gamma >0$ and $ \beta >0$, there exists a unique solution $(M^*,C^*)$ of \eqref{steady}.
\end{lemma}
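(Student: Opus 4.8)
The plan is to collapse the stationary system \eqref{steady} to a single scalar equation for $C^*$ and then solve it by a monotonicity argument. First I would rule out $C^*=0$: since here $R(0)=\gamma>0$, Lemma~\ref{lem:zerosteady} already excludes $(0,0)$, and directly, if $C^*=0$ the first equation of \eqref{steady} forces $\mu\,M^*\equiv 0$, hence $M^*\equiv 0$, and the third equation degenerates to $\gamma=0$, a contradiction. So every solution has $C^*>0$ and formula \eqref{eq:steadyM} applies. Inserting the boundary condition $M^*(0)=f(C^*)$ (with $f$ now a function of the single variable $C$) gives the profile explicitly,
\[
M^*(a) = f(C^*)\,\frac{\mathcal{V}(0)}{\mathcal{V}(a)}\,e^{-\frac{1}{C^*}\int_0^a \frac{\mu(a')}{\mathcal{V}(a')}\,da'},
\]
so that $M^*$ is entirely determined by $C^*$ and the whole problem reduces to finding $C^*$.

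Next I substitute this profile into the third equation. Writing
\[
\Psi(C):=\mathcal{V}(0)\int_0^{+\infty} e^{-\frac{1}{C}\int_0^a \frac{\mu(a')}{\mathcal{V}(a')}\,da'}\,da,
\]
the consumption term becomes $\int_0^{+\infty}\mathcal{V}(a)\,M^*(a)\,da = f(C^*)\,\Psi(C^*)$, and the constraint $\gamma-\beta C^*-C^*\int_0^{+\infty}\mathcal{V}M^*\,da=0$ takes the form $\Phi(C^*)=\gamma$, where
\[
\Phi(C):=\beta C + C\,f(C)\,\Psi(C).
\]
Note that, since the consumption term is nonnegative, any solution satisfies $\beta C^*\le\gamma$, i.e. $C^*\in(0,\gamma/\beta]$, so I only ever need to control $\Phi$ on this bounded interval. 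It then suffices to prove that $\Phi$ meets the level $\gamma$ at exactly one point of $(0,\gamma/\beta]$.

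To do this I would establish, on $(0,\gamma/\beta]$: (i) $\Psi$ is finite, continuous and nondecreasing — finiteness follows from the decay of $\exp(-\frac1C\int_0^a\mu/\mathcal V)$ and is exactly the statement that $M^*$ belongs to $L^1(\mathbb{R}_+;(1+\mathcal V)\,da)$; monotonicity follows because the integrand increases pointwise in $C$; continuity follows from monotone/dominated convergence; (ii) $f$ is nonnegative and strictly increasing by hypothesis; hence (iii) $C\mapsto C\,f(C)\,\Psi(C)$ is a product of nonnegative nondecreasing factors, thus nonnegative and nondecreasing, so $\Phi(C)=\beta C+(\text{nondecreasing})$ is continuous and \emph{strictly} increasing. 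Since $\Phi(0^+)=0<\gamma$ and $\Phi(\gamma/\beta)\ge\gamma$, the intermediate value theorem produces a root, and strict monotonicity makes it unique; reinserting this $C^*$ into the explicit formula yields the unique $M^*$, which finishes the proof. I expect the only genuinely delicate step to be part (i), namely verifying $\Psi(C^*)<+\infty$ — that the exponential profile is integrable against the weight $(1+\mathcal V)$, which requires $\int_0^a \mu/\mathcal V\to+\infty$ and is precisely where the structural assumptions \eqref{hyp:M} on $\mu$ and $\mathcal V$ must be invoked.
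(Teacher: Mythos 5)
Your overall strategy is the paper's: rule out $C^*=0$, use \eqref{eq:steadyM} with $M^*(0)=f(C^*)$ to collapse the system to the scalar equation $\beta C + C\,f(C)\,\Psi(C)=\gamma$, and conclude by strict monotonicity plus the intermediate value theorem. The one step that does not survive scrutiny is exactly the one you flagged as delicate, and your proposed resolution of it is insufficient: $\Psi$ is \emph{not} in general finite on all of $(0,\gamma/\beta]$. Under \eqref{hyp:M} the function $\mathcal{V}$ is merely Lipschitz, so $\int_0^a\mu(a')/\mathcal{V}(a')\,da'$ may grow only like $\frac{\mu}{\mathcal{V}_1}\log\left(1+\mathcal{V}_1 a/\mathcal{V}_2\right)$ (take $\mathcal{V}$ affine and $\mu$ constant, as in \eqref{ES:Vinfini}); the integrand of $\Psi(C)$ is then $\left(1+\mathcal{V}_1 a/\mathcal{V}_2\right)^{-\mu/(\mathcal{V}_1 C)}$, which is integrable if and only if $C<\mu/\mathcal{V}_1$, a threshold that can perfectly well be smaller than $\gamma/\beta$. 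So the condition ``$\int_0^a\mu/\mathcal{V}\to+\infty$'' is not what is needed: integrability of $e^{-\frac1C\int_0^a\mu/\mathcal{V}}$ is a $C$-dependent property, and the natural domain of $\Psi$ is an interval $]0,X[$ with possibly $X<\gamma/\beta$. (The paper itself observes this: the caption of Figure \ref{fig:V1V2Variation} notes that $C^*$ stays below $\min(\mu/\mathcal{V}_1,\gamma/\beta)$, not merely below $\gamma/\beta$.) With your interval $(0,\gamma/\beta]$, the claimed continuity and finiteness of $\Phi$ can fail, and the IVT step breaks.

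The paper's proof repairs precisely this point: it defines $]0,X[$ as the maximal interval on which $x\mapsto\int_0^{+\infty}e^{-\frac1x\int_0^a\mu(a')/\mathcal{V}(a')\,da'}\,da$ is finite, proves $X>0$ via the explicit bound $e^{-\frac1x\int_0^a\mu/\mathcal{V}}\le\left(1+\frac{\|\mathcal{V}'\|_\infty}{\mathcal{V}(0)}a\right)^{-\inf\mu/(\|\mathcal{V}'\|_\infty x)}$, and then argues that $x\mapsto x\,f(x)\,\mathcal{V}(0)\int_0^{+\infty}e^{-\frac1x\int_0^a\mu/\mathcal{V}}\,da+\beta x-\gamma$ is continuous, strictly increasing and maps $]0,X[$ \emph{onto} $]-\gamma,+\infty[$, the surjectivity at the right endpoint coming either from $\beta x\to+\infty$ when $X=+\infty$ or from the blow-up of the integral as $x\to X^-$ when $X<+\infty$. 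Your a priori bound $C^*\le\gamma/\beta$ is true but is not the binding restriction. If you replace $(0,\gamma/\beta]$ by $]0,X[$ and supply the surjectivity argument at the right endpoint, your proof coincides with the paper's; everything else (the exclusion of $C^*=0$, the explicit profile, the monotonicity of each factor) is correct and identical in substance.
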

\begin{proof}
First, we define $]0,X[$ the domain of definition of the strictly increasing function $x \mapsto \int_{0}^{+ \infty } e^{- \frac{1}{x} \, \int_{0}^{a} \frac{\mu(a')}{\mathcal{V}(a')} da'}\, da$. We note that $X>0$ since the integrand is in $L^1$ for $x$ small enough
$$e^{- \frac{1}{x} \, \int_{0}^{a} \frac{\mu(a')}{\mathcal{V}(a')} da'}\leq e^{- \frac{\inf \mu}{x} \, \int_{0}^{a} \frac{da'}{\mathcal{V}(a')}} \leq e^{- \frac{\inf \mu}{x} \, \int_{0}^{a} \frac{da'}{||\mathcal{V}' ||_{\infty} a' + \mathcal{V}(0)}} \le \left(1 + \frac{||\mathcal{V}' ||_{\infty}}{\mathcal{V} (0)} \, a  \right)^{- \frac{\inf \mu }{||\mathcal{V}' ||_{\infty}  \, x}}\,, \quad a\geq 0.$$
For a given $C^*>0$, we define a unique $M^*$ by \eqref{eq:steadyM} and we substitute it in the last equation of system \eqref{steady}.  Using next that the function 
$$x \mapsto x \, f(x) \, \mathcal{V}(0) \, \int_{0}^{+ \infty } e^{- \frac{1}{x} \, \int_{0}^{a} \frac{\mu(a')}{\mathcal{V}(a')} da'}\, da + \beta x - \gamma $$ 
is continuously strictly increasing from $]0,X[$ onto $\left]-\gamma, +\infty\right[$, from the intermediate value theorem for a bijective function, we deduce that there exists a unique $C^*>0$ solution of system \eqref{steady}. Moreover we note that $ \int_{0}^{+ \infty } e^{- \frac{1}{C^*} \, \int_{0}^{a} \frac{\mu(a')}{\mathcal{V}(a')} da'}\, da = \frac{\gamma - \beta \, C^*}{C^* \, f(C^*) \, \mathcal{V}(0)} < \infty$.
\end{proof}

\begin{lemma}\label{prop:uniquesteadywithB}
%Under assumptions \ref{hyp:Mbis} - \ref{hyp:add} and 
Assuming that $M(t,0)=\int_0^{+\infty} B(a) M(t,a) da$ and that $R(C(t))=\gamma - \beta \, C(t)$ with $\gamma >0$ and $ \beta >0$, there exists a unique solution $(M^*,C^*)$ of \eqref{steady}.
\end{lemma}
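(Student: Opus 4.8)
The plan is to follow the scheme of the proof of Lemma \ref{prop:uniquesteady}, exploiting the explicit stationary profile, while taking into account that the boundary condition $M^*(0) = \int_0^{+\infty} B(a) M^*(a)\,da$ is now \emph{homogeneous} and linear in $M^*$; this changes which of the three equations in \eqref{steady} fixes $C^*$.

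First I would note that any solution has $C^* > 0$, since the third equation at $C^* = 0$ would force $R(0) = \gamma = 0$, contradicting $\gamma > 0$. Hence the profile is given by \eqref{eq:steadyM}, so that $M^*$ is determined by the scalar pair $(C^*, M^*(0))$. Substituting \eqref{eq:steadyM} into the boundary condition and using that the profile equals $1$ at $a = 0$ gives $M^*(0) = M^*(0)\,\Phi(C^*)$, where
\[
\Phi(C) := \mathcal{V}(0)\int_0^{+\infty} \frac{B(a)}{\mathcal{V}(a)}\, e^{-\frac{1}{C}\int_0^a \frac{\mu(a')}{\mathcal{V}(a')}\,da'}\,da .
\]
Thus either $M^*(0) = 0$, which forces $M^* \equiv 0$ and then $C^* = \gamma/\beta$ from the third equation, or else $M^*(0) > 0$ and $C^*$ must solve the scalar ``net reproduction'' equation $\Phi(C^*) = 1$.

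The technical core is the analysis of $\Phi$. Using the integrability bound already established in the proof of Lemma \ref{prop:uniquesteady}, namely $e^{-\frac{1}{C}\int_0^a \frac{\mu}{\mathcal{V}}} \le \left(1 + \frac{\|\mathcal{V}'\|_\infty}{\mathcal{V}(0)}a\right)^{-\inf\mu/(\|\mathcal{V}'\|_\infty C)}$, I would show that $\Phi$ is finite on an interval $]0, X'[$, continuous and strictly increasing there, with $\Phi(C) \to 0$ as $C \to 0^+$. Strict monotonicity already gives at most one root of $\Phi(C^*) = 1$, and the intermediate value theorem gives exactly one as soon as $\Phi$ exceeds $1$ near the upper end of its domain. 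With this $C^*$ fixed, the third equation determines the amplitude explicitly,
\[
M^*(0) = \frac{\gamma - \beta C^*}{C^*\,\mathcal{V}(0)\int_0^{+\infty} e^{-\frac{1}{C^*}\int_0^a \frac{\mu(a')}{\mathcal{V}(a')}\,da'}\,da},
\]
which is an admissible positive value precisely when $C^* < \gamma/\beta$, and then $M^*$ follows from \eqref{eq:steadyM}.

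The delicate point — and the one I expect to be the main obstacle — is the uniqueness assertion, because the homogeneous boundary condition always admits the trivial branch $(0, \gamma/\beta)$ in addition to the positive steady state produced by $\Phi(C^*) = 1$. I would therefore read the statement as uniqueness within the class of nontrivial nonnegative profiles: there, strict monotonicity of $\Phi$ pins down $C^*$ uniquely, hence $M^*(0)$ and $M^*$. Checking the threshold condition that places the root in $]0, \min(X', \gamma/\beta)[$ — so that the amplitude is positive and the profile integrable — is the quantitative step where the hypotheses \eqref{hyp:M}, \eqref{hyp:R} and the sign of $\gamma - \beta C^*$ must be verified with care.
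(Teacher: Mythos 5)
Your proposal follows essentially the same route as the paper's proof: substitute the explicit profile \eqref{eq:steadyM} into the boundary condition to reduce the problem to the scalar equation $\Phi(C^*)=1$ for the net reproduction integral $\Phi$, use strict monotonicity of $\Phi$ to pin down $C^*$, and then recover $M^*(0)$ from the third equation of \eqref{steady}. The caveats you flag — the trivial branch $(0,\gamma/\beta)$, the need for $\Phi$ to actually attain the value $1$ on its domain, and the sign condition $\gamma-\beta C^*>0$ — are genuine, but the paper's own proof sidesteps them by assuming $M^*(0)>0$ at the outset and simply asserting that $\Phi$ is strictly increasing from $\mathbb{R}_+^*$ onto $\mathbb{R}_+^*$, so your treatment is, if anything, more careful than the original.
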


\begin{proof}
Let us assume that $C^*$ and that  $M^*(0)>0$, then substituting \eqref{eq:steadyM} in the second equation of \eqref{steady}, we deduce that 
$$1 = \int_0^{+\infty} B(a) \frac{\mathcal{V}(0)}{\mathcal{V}(a)} \, e^{- \frac{1}{C^*}  \int_{0}^{a} \frac{\mu(a')}{\mathcal{V}(a')} da'} \, da \, .$$
Since $x \mapsto \int_0^{+\infty} B(a) \frac{\mathcal{V}(0)}{\mathcal{V}(a)} \, e^{- \frac{1}{x}  \int_{0}^{a} \frac{\mu(a')}{\mathcal{V}(a')} da'} \, da $ is strictly increasing from $\mathbb{R}_+^*$ onto $\mathbb{R}_+^*$, the existence and uniqueness of $C^*>0$ follows. We compute then $M^*(0)$ with the last equation of system \eqref{steady}
$$ M^*(0) = \frac{\gamma - \beta \, C^*}{C^* \mathcal{V}(0)\, \int_{0}^{+ \infty } e^{- \frac{1}{C^*} \, \int_{0}^{a} \frac{\mu(a')}{\mathcal{V}(a')} da'}\, da }.$$
\end{proof}

During the following long time asymptotic study, we will assume that $M(t,0)=f(C(t))$. We leave other cases, namely $M(t,0)$ depending on $\int_0^{+\infty} B(a) \, M(t,a) da$, for further works.

\subsubsection{Steady state profiles}
\noindent
%\textbf{****Faut-il laisser ce paragraphe ??? Si oui il faut faire des commentaires generaux du type pas de lesion tres grave****}

Here, we assume that $\mu$ does not depend on $a$, that $M(t,0)=\alpha C(t)$ and that $R(C(t))=\gamma - \beta \, C(t)$ with $\gamma >0$ and $ \beta >0$.
\begin{itemize}
\item[-]
If $\mathcal{V}(a)=\mathcal{V}$ is constant, then for all $a\ge 0$, we have
\begin{equation}\label{ES:Vct}
M^*(a)=M^*(0)  \, e^{-\frac{\mu}{\mathcal{V} \, C^*} a}\, .
\end{equation} 
The last equation of system \eqref{steady} shows that the speed $\mathcal{V}$ has an important impact on the quantity $C^*$. We notice this influence on $M^*(0)$ in the \ref{fig:ProfilexpV}.
\begin{center}
\begin{figure}
\includegraphics[scale=0.3]{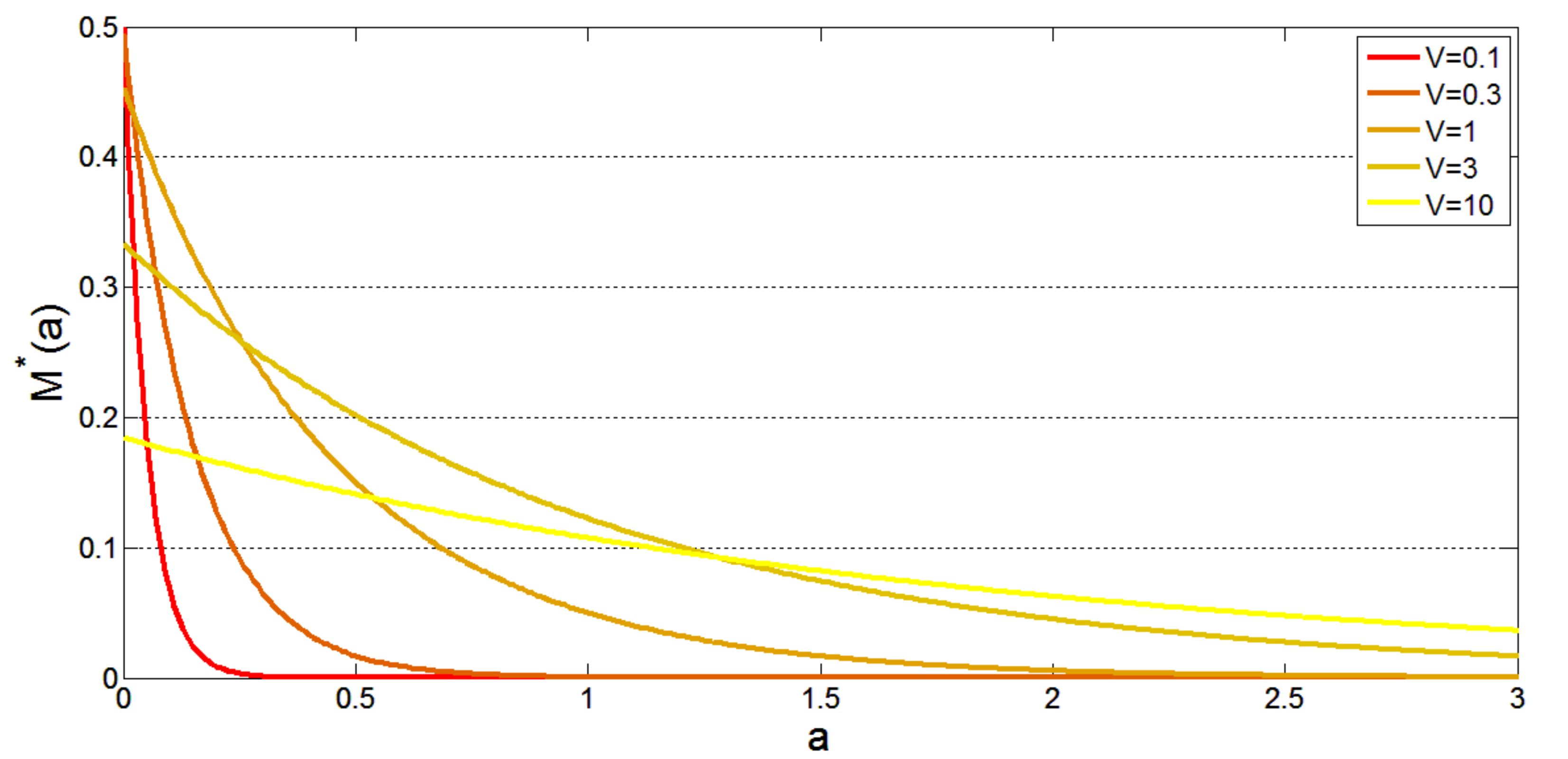}
\caption{Representation of $M^*$ solution of system \eqref{steady} for different $\mathcal{V}$ with $\alpha=\gamma=\mu=1$ and $\beta=2$.} \label{fig:ProfilexpV}
\end{figure}
\end{center}
The increase of the speed $\mathcal{V}$ leads to a repartition of macrophages more concentrated in $a$ great. The macrophages can ingest more LDL-Ox before dying. This phenomenon also leads to the decreasing of the quantity of LDL-Ox $C^*$.
\item[-]
If $\mathcal{V}(a)=\mathcal{V}_1 a + \mathcal{V}_2$ is affine,  then  for all $a\ge 0$, we have
\begin{equation}\label{ES:Vinfini}
M^*(a)=M^*(0) \, \left(1 + \frac{\mathcal{V}_1}{\mathcal{V}_2} \, a  \right)^{-\frac{ \mu }{\mathcal{V}_1  \, C^*}-1}\, .
\end{equation}
Figure \ref{fig:ProfilaffineV} shows that the decrease of $\mathcal{V}_1$ leads to a profile of $M^*$ more significant for $a$ small. Moreover for $\mathcal{V}_1 > \frac{\mu \beta}{\gamma}$, the numerical simulations show the following structure : if $\mathcal{V}_1>\mathcal{V}_1'$ then $M^*_{\mathcal{V}_1}(a) > M^*_{\mathcal{V}_1'}(a)$ for $a \geq 0$.
%%%%%%%%%%%%%%%%%%%%%%%%%%%%%%%%%%%%%%%%%%%%%%%%%%%%%%%%%%%%%%%%%%%%%%%%%%%%%%%%%%%%%%%%%%%%%%%%%%%%%%
\begin{center}
\begin{figure}
\includegraphics[scale=0.3]{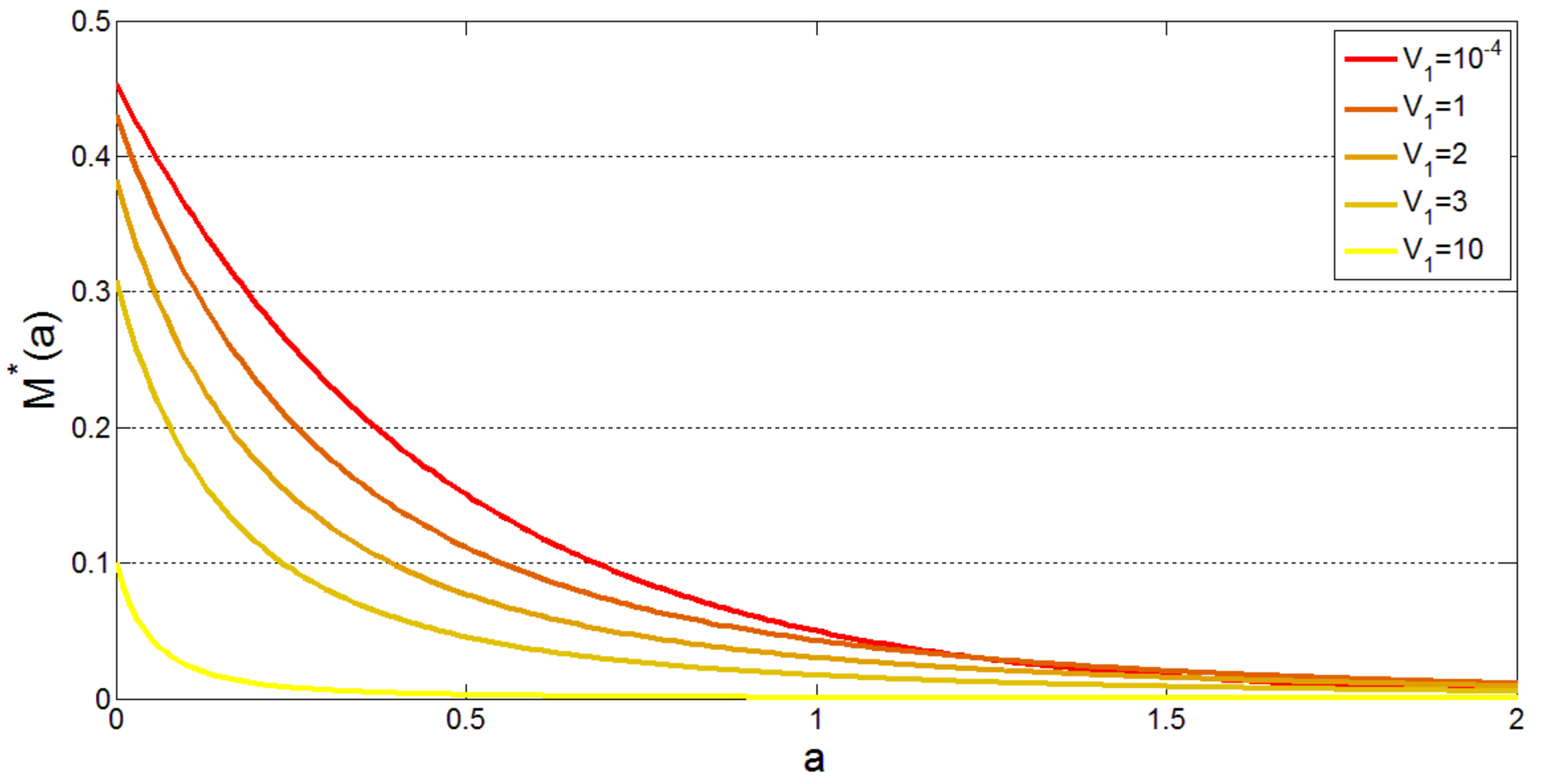}
\caption{Representation of $M^*$  solution of system \eqref{steady} for different expressions of $\mathcal{V}_1$ with $\alpha=\gamma=\mu=\mathcal{V}_2=1$ and $\beta=2$.} \label{fig:ProfilaffineV}
\end{figure}
\end{center}
%%%%%%%%%%%%%%%%%%%%%%%%%%%%%%%%%%%%%%%%%%%%%%%%%%%%%%%%%%%%%%%%%%%%%%%%%%%%%%%%%%%%%%%%%%%%%%%%%%%%%%
%hence $\frac{\mathcal{V}(0)}{\mathcal{V}(a)} \,  e^{- \frac{1}{C^*} \, \int_{0}^{a} \frac{\mu(a')}{\mathcal{V}(a')} da'}$ belongs to $ L^1(\mathbb{R}_+; (1+ \mathcal{V} (a)) \, da) $ as soon as $C^* \in \left]0,\frac{\inf_{x \geq 0} \mu (x)}{||\mathcal{V}' ||_{\infty}} \right[$.

%\begin{center}
%\begin{figure}[H]
%\includegraphics[scale=0.3]{V1V2variationPremier moment.eps} \label{fig:V1V2Variation}
%\caption{Representation of $C^*$ depending on $\mathcal{V}_1$ and $\mathcal{V}_2$ with $\alpha=\gamma=\mu=1$ and $\beta=2$. We note that $C^*$ stays below $\min \left(\frac{\mu}{\mathcal{V}_1},\frac{\gamma}{\beta}\right)$.}
%\end{figure}
%\end{center}
The first momentum of $M^*$ gives the total amount of lipid trapped inside the macrophages.
An easy computation gives $\int_0^{+\infty} a M^*(a) da=\frac{\gamma - \beta C^*}{\mu}$, so the total amount of lipid at equilibrium is
$$\int_0^{+\infty} a M^*(a) da + C^*=\frac{\gamma}{\mu} + \left(1 - \frac{\beta}{\mu}\right) C^*.$$

%%%%%%%%%%%%%%%%%%%%%%%%%%%%%%%%%%%%%%%%%%%%%%%%%%%%%%%%%%%%%%%%%%%%%%%%%%%%%%%%%%%%%%%%%%%%%%%%%%%%%%
\begin{center}
\begin{figure}
\includegraphics[scale=0.3]{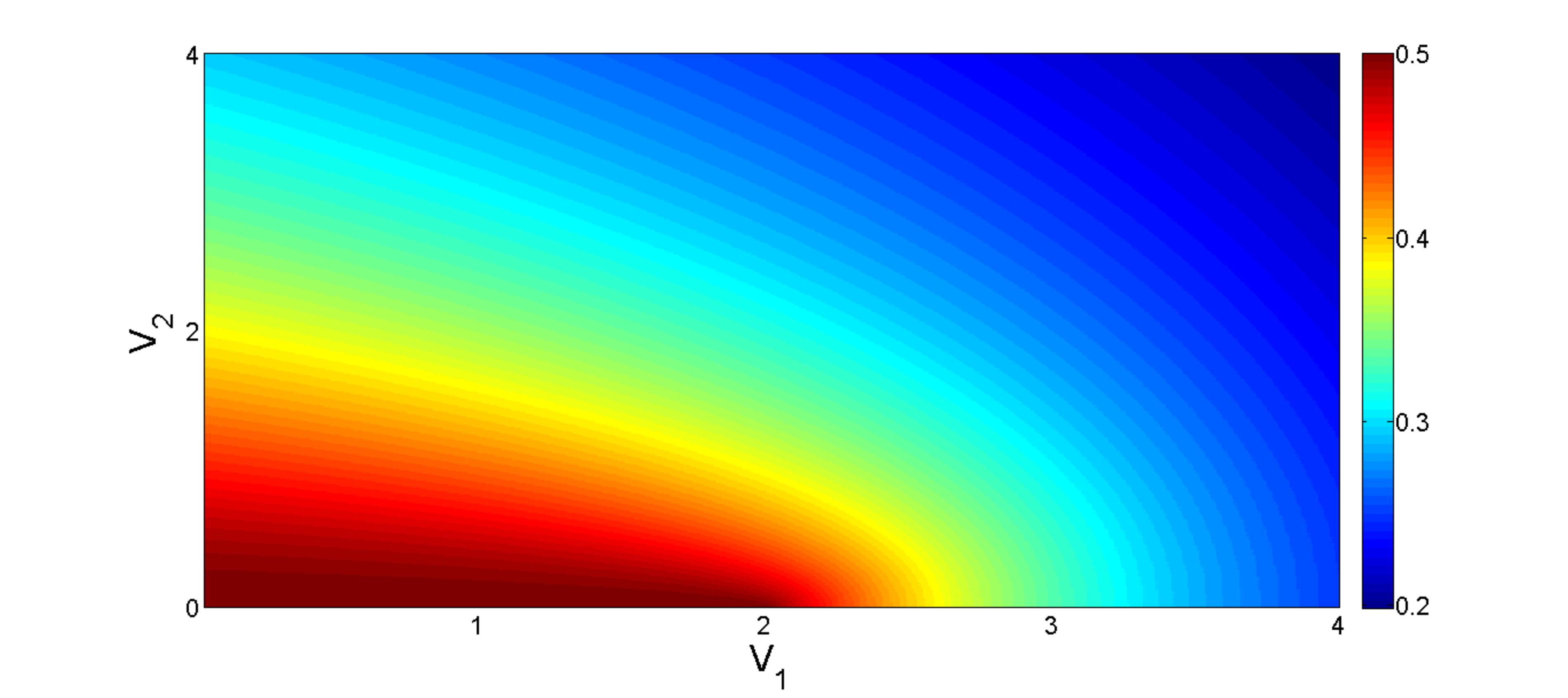} 
\caption{Representation of $C^*$ solution of system \eqref{steady} depending on $\mathcal{V}_1$ and $\mathcal{V}_2$ with $\alpha=\gamma=\mu=1$ and $\beta=2$. We note that $C^*$ stays below $\min \left(\frac{\mu}{\mathcal{V}_1},\frac{\gamma}{\beta}\right)$.} \label{fig:V1V2Variation}
\end{figure}
\end{center}
%%%%%%%%%%%%%%%%%%%%%%%%%%%%%%%%%%%%%%%%%%%%%%%%%%%%%%%%%%%%%%%%%%%%%%%%%%%%%%%%%%%%%%%%%%%%%%%%%%%%%%

The increase of the speed $\mathcal{V}$ leads to diminish the quantity of lipid component if $\beta>\mu$ and increase it if $\beta<\mu$.
Moreover, figure \ref{fig:V1V2Variation} shows that if $\mathcal{V}_2$ is small then the variation of $\mathcal{V}_1$ between $0$ and $\frac{\mu \beta}{\gamma}$ has no impact on the behaviour of the disease.
\end{itemize}

The impact of each parameter on the model is an important step to understand the full dynamics of the model.
In the rest of this section, we study the long time behaviour of the solution of system \eqref{Pbcouple} with \eqref{CI}. 

\subsection{Normal endothelium}
\noindent
%\textbf{****Pourrait-on aussi faire le cas $M(t,0)=\alpha C(t)$ ? Changer ce qui suit en mettant un theoreme****}
%\begin{remark}
 If one assumes no saturation in the monocyte recruitment, $M(t,0)=B \, \int_0^{+\infty} M(t,a) da$, a Malthusian growth model, $R(C(t))=\alpha \, C(t)$, for the LDL dynamics and that $\mu$, $\mathcal{V}$ are positive constants, then, integrating  system \eqref{Pbcouple} in $a$, denoting $\mathcal{M}(t)= \int_0^{+ \infty} \, M(t,a) \, da$, it yields the following Lotka-Volterra system
\begin{equation}\label{eq:Lotka}
\left\{
\begin{array}{l}
\mathcal{M}'(t) = B \, \mathcal{V} \, C(t) \, \mathcal{M} (t) - \mu \, \mathcal{M}(t) \\
C'(t) = \alpha \, C (t) - \mathcal{V} \, C(t) \, \mathcal{M}(t).
\end{array}
\right.
\end{equation}
The corresponding Hamiltonian function, see \cite{MurrayBio} e.g., is 
\[H(\mathcal{M}(t),C(t)) = \alpha \, \log (\mathcal{M}(t)) - \mathcal{V} \, \mathcal{M}(t) + \mu \, \log (C(t)) - B\, \mathcal{V} \, C(t).\]
We can prove that $H$ is constant along a solution $(\mathcal{M}(t),C(t))$ of system \eqref{eq:Lotka}:
\[\frac{d}{dt} \Big[ H(\mathcal{M}(t),C(t)) \Big] = 0.\]
Closed trajectories surround the steady state $\left(\dfrac{\alpha}{\mathcal{V}},\dfrac{\mu}{B \mathcal{V}}\right)$ on the level curves defined by $H$.
Since oscillations are maintained by the system, the solutions are periodic.
Such a model could be used for "young" or "low inflammatory" plaques.
%\end{remark}

%%%%%%%%%%%%%%%%%%%%%%%%%%%%%%%%%%%%%%%%%%%%%%%%%%%%%%%%%%%%%%%%%%%%%%%%%%%%%%%%%%%%%%%%%%%%%%%%%%%%%%%%%%%%
%%%%%%%%%%%%%%%%%%%%%%%%%%%%%%%%%%%%%%%%%%%%%%%%%%%%%%%%%%%%%%%%%%%%%%%%%%%%%%%%%%%%%%%%%%%%%%%%%%%%%%%%%%%%
%%%%%%%%%%%%%%%%%%%%%%%%%%%%%%%%%%%%%%%%%%%%%%%%%%%%%%%%%%%%%%%%%%%%%%%%%%%%%%%%%%%%%%%%%%%%%%%%%%%%%%%%%%%%
%%%%%%%%%%%%%%%%%%%%%%%%%%%%%%%%%%%%%%%%%%%%%%%%%%%%%%%%%%%%%%%%%%%%%%%%%%%%%%%%%%%%%%%%%%%%%%%%%%%%%%%%%%%%
%%%%%%%%%%%%%%%%%%%%%%%%%%%%%%%%%%%%%%%%%%%%%%%%%%%%%%%%%%%%%%%%%%%%%%%%%%%%%%%%%%%%%%%%%%%%%%%%%%%%%%%%%%%%
%%%%%%%%%%%%%%%%%%%%%%%%%%%%%%%%%%%%%%%%%%%%%%%%%%%%%%%%%%%%%%%%%%%%%%%%%%%%%%%%%%%%%%%%%%%%%%%%%%%%%%%%%%%%

\subsection{Injuried endothelium}
\noindent
%\textbf{****Il faut reprendre ce qui suit en supprimant ce qui est superflux et en reorganisant le paragraphe. Par exemple dans le  theoreme \ref{asymptotic_V_constant} on parle des fonctions $\mu$ et $\mathcal V$ alors qu'en fait on considere le cas ou ce sont des constantes.****}

% and then we present numerical simulations that describe the steady state profiles in some particular cases. 
Here we study the long-time asymptotics in a particular case for which  we will need the following additional assumptions to bound $M(t,0)$ globally in time and $M$ globally in $C^0(\mathbb{R}_+; L^1(\mathbb{R}_+;(1+a) \, da))$ together with $M \log(M)$ in $C^0(\mathbb{R}_+; L^1(\mathbb{R}_+;da))$: 
\begin{equation}\label{hyp:R_2}
\left\{
\begin{array}{l}
M_0 \in L^1(\mathbb{R}_+;(1+a) \, da), \, M_0 \, \log(M_0) \in L^1(\mathbb{R}_+; da),\\ \lim_{a \rightarrow +\infty} \mathcal{V}(a)^2 \, M_0(a) = 0, \, \inf_{x \geq 0} \mu(x) >0\textrm{ and }\inf_{x \geq 0} \mathcal{V}(x) >0.
\end{array}
\right.
\end{equation}

%. as it is classical \cite{Michel, MichelPerthame, Pierre}, we first looked for an entropy. The non-linear nature of the system made this study difficult and  after several unsuccessful attempts, we focused on the study of particular cases and we considered a different strategy. Starting from the observation that in some particular cases, it is possible to reduce the model to a system of ODEs, we looked for a Lyapunov function. Such a Lyapunov function provided the long time convergence of $C$ and of $\int_0^{+ \infty} \, \mathcal{V}(a) \, M(t,a) da$. We then tried to prove the convergence of $(M(t,.))_{t\geq 0}$ in $L^1$. Recalling that, in the $L^1$ space, a bounded sequence converging almost everywhere is not necessarily weakly compact in $L^1$, we needed to add some supplementary conditions to obtain the weak compactness of $(M(t,.))_{t\geq 0}$. To do so we used the Dunford-Pettis theorem. In the case where $M(t,0)=f(C(t))$, we had an explicit expression of $M$ depending only on $C$. Under the assumption that $C$ converged, the weak compactness and the convergence almost everywhere led to the $L^1$ convergence of $M$. We obtained the convergence of $C$ by reducing system \eqref{Pbcouple} in ODEs in two particular cases:  $\mathcal{V}$ constant or affine. In these two latter cases, the obtention of a Lyapunov function allowed to achieve the the proof of the convergence of $(M,C)$ solution of system \eqref{Pbcouple} with \eqref{CI}.

\begin{theorem}\label{asymptotic_V_constant}
Assume that the functions $\mu, \, B,\, f, \, \mathcal{V}$ and $R$ satisfy \eqref{hyp:M} and \eqref{hyp:R}. Moreover, we assume that $\mu>0$, $\mathcal{V}>0$ do not depend on $a$ together with $R(x)=\gamma - \beta x$ with $\gamma >0$, $\beta >0$ and that $M(t,0) = \alpha \, C(t)$, the solution $(M,C)$ of system \eqref{Pbcouple} with \eqref{CI} convergences towards $(M^*, C^*)$ the unique solution of \eqref{steady} as $t$ tends to infinity. More precisely the following strong convergence holds true 
\begin{equation*}
M(t,.) \to M^* \quad \mbox{ strongly in } L^1(\mathbb{R}_+,\mathbb{R}_+)\, , \textrm{ as } t\to +\infty\, .
\end{equation*}
\end{theorem}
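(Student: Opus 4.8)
The plan is to exploit the very special structure of this case — $\mu$ and $\mathcal{V}$ constant, linear recruitment $M(t,0)=\alpha C(t)$ and affine influx $R(x)=\gamma-\beta x$ — to reduce the question to a two-dimensional autonomous ODE, and only afterwards to recover the full profile convergence in $L^1$. First I would integrate the transport equation \eqref{Aprioribound2} in $a$. Writing $\mathcal{M}(t):=\int_0^{+\infty} M(t,a)\,da$ and using the boundary condition together with the decay $\mathcal{V}(a)M(t,a)\to 0$ as $a\to+\infty$ (guaranteed by the weighted bounds of Theorem \ref{th:global2} under the extra hypothesis \eqref{hyp:R_2}), the flux term reduces to $\mathcal{V}\,M(t,0)=\alpha\mathcal{V}\,C(t)$, so that $(\mathcal{M},C)$ solves the closed planar system
\begin{equation*}
\mathcal{M}'(t)=\alpha\,\mathcal{V}\,C(t)^2-\mu\,\mathcal{M}(t),\qquad C'(t)=\gamma-\beta\,C(t)-\mathcal{V}\,C(t)\,\mathcal{M}(t).
\end{equation*}
A direct computation shows this system has a unique equilibrium $(\mathcal{M}^*,C^*)$ in the open quadrant $\{\mathcal{M}>0,\,C>0\}$, and that it is exactly the total mass and LDL concentration of the stationary state \eqref{eq:steadyM} produced in Lemma \ref{prop:uniquesteady}.

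Next I would establish global convergence $(\mathcal{M}(t),C(t))\to(\mathcal{M}^*,C^*)$ for the planar system. The quadrant $\{\mathcal{M}\ge 0,\,C\ge 0\}$ is forward invariant — on $\{C=0\}$ one has $C'=\gamma>0$ and on $\{\mathcal{M}=0\}$ one has $\mathcal{M}'=\alpha\mathcal{V}C^2\ge 0$ — and the trajectory is bounded, since $C\le K_{C_0,k}$ by Proposition \ref{proposition:C} and $\mathcal{M}$ is then bounded above by Gronwall's lemma applied to its equation. The divergence of the vector field equals $-\mu-\beta-\mathcal{V}\mathcal{M}$, which is strictly negative on the invariant region; Bendixson--Dulac (with Dulac function $\equiv 1$) therefore rules out any closed orbit, whether periodic or a homoclinic loop. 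Since the equilibrium is unique, the Poincar\'e--Bendixson theorem forces every bounded trajectory to converge to $(\mathcal{M}^*,C^*)$. This planar analysis is where I expect the real work to lie: the local stability is immediate from the sign of the trace and determinant of the Jacobian, but promoting it to the global statement is the crux, and it is precisely the strictly signed divergence that makes it go through.

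Finally, I would upgrade the convergences $C(t)\to C^*$ and $\mathcal{M}(t)\to\mathcal{M}^*$ to strong $L^1$ convergence of the profile. With $\mu,\mathcal{V}$ constant the transport equation is linear in $M$ and solvable along characteristics $a'(t)=\mathcal{V}C(t)$. Splitting $M(t,\cdot)$ into the part emanating from the initial datum and the part emanating from the boundary, the former has mass $e^{-\mu t}\|M_0\|_{1}\to 0$, while for the latter one has $M(t,a)=\alpha\,C(\tau)\,e^{-\mu(t-\tau)}$, where $\tau=\tau(t,a)$ is defined by $\int_\tau^t \mathcal{V}\,C(s)\,ds=a$. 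For fixed $a$, as $t\to+\infty$ one checks that $\tau(t,a)\to+\infty$ and $t-\tau(t,a)\to a/(\mathcal{V}C^*)$, whence $M(t,a)\to \alpha\,C^*\,e^{-\mu a/(\mathcal{V}C^*)}=M^*(a)$ pointwise almost everywhere. Since $M(t,\cdot)\ge 0$, $M(t,\cdot)\to M^*$ a.e., and $\|M(t,\cdot)\|_{1}=\mathcal{M}(t)\to\mathcal{M}^*=\|M^*\|_{1}$, Scheff\'e's lemma yields $\|M(t,\cdot)-M^*\|_{1}\to 0$, which is the claimed strong convergence. The weighted and $M\log(M)$ bounds from \eqref{hyp:R_2} enter to legitimize the integration by parts in the first step and to guarantee that no mass escapes to $a=+\infty$, keeping the mass identity $\|M(t,\cdot)\|_{1}=\mathcal{M}(t)$ consistent with the pointwise limit so that Scheff\'e's lemma applies.
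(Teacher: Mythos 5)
Your proposal is correct, and it reaches the conclusion by a genuinely different route in its two main steps. For the planar system \eqref{eq:EDOVconstant} the paper constructs the explicit Lyapunov function $L_c(\mathcal{M},C)=(\mathcal{M}-\mathcal{M}^*)^2+2\alpha(C-C^*)^2$ and shows $\frac{d}{dt}L_c<0$ off the equilibrium (Proposition \ref{propmain:Lyapunov}), which yields global convergence \emph{and} an explicit decay rate $2\min(\mu,\beta)$; your Bendixson--Dulac plus Poincar\'e--Bendixson argument (strictly negative divergence $-\mu-\beta-\mathcal{V}\mathcal{M}$ excluding periodic orbits and homoclinic loops, unique equilibrium, bounded trajectories in the invariant quadrant) gives the same global convergence with no guessing of the right quadratic combination, but at the price of losing any quantitative rate. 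For the upgrade to strong $L^1$ convergence, the paper goes through Dunford--Pettis: it proves uniform bounds on $\int aM$ and $\int M|\log M|$ (Lemmas \ref{lem:weakL1} and following, using Lemma \ref{polarisation}), deduces weak $L^1$ compactness (Proposition \ref{th:weakcompact}), and combines this with the a.e. limit from the characteristics via Egorov (Lemma \ref{lem:Egorov}). Your Scheff\'e argument short-circuits all of that: since $\|M(t,\cdot)\|_1=\mathcal{M}(t)$ is exactly the quantity whose convergence to $\mathcal{M}^*=\|M^*\|_1$ the ODE step already delivers, nonnegativity, a.e. convergence and convergence of the norms give $L^1$ convergence directly. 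This is a real simplification in the constant-$\mu$, constant-$\mathcal{V}$, $M(t,0)=\alpha C(t)$ setting of the theorem; the paper's heavier weak-compactness machinery is what one would need in situations where the $L^1$ norm is not itself the converging macroscopic variable (e.g. the boundary condition \eqref{ex_f_2} or nonconstant $\mathcal{V}$, which Lemma \ref{thmain:L1convergence} is written to cover). The only points worth making explicit in a write-up are that the unique equilibrium of the planar system does coincide with $\bigl(\int_0^\infty M^*,\,C^*\bigr)$ from Lemma \ref{prop:uniquesteady}, and that $C^*>0$ ensures $h(t)=\int_0^t C\to\infty$ so that every fixed $a$ eventually lies below the characteristic through the origin.
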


The proof is broken in several steps. In a fist step, the method of characteristics is used to give an expression of $M$. 
Then, using Dunford-Pettis theorem, in proposition \ref{th:weakcompact}, we establish the weak compactness of  $ \left(M(t,\cdot)\right)_{t\ge 0}$ in $L^1$. Egorov theorem then provides the $L^1$ convergence under the assumption that $C$ converges. In the last step, in both cases considered, we reduce the system \eqref{Pbcouple} with \eqref{CI} to two different ODE systems. These systems are respectively studied in proposition \ref{propmain:Lyapunov}. Using these propositions make it possible to prove the convergence of the couple $(M,C)$.

\subsubsection{Explicit solution : method of characteristics}
\noindent

We use the method of characteristics to give an expression of $M$ solution of system \eqref{Pbcouple} with \eqref{CI}. To do so, we define $N(t,a)=\mathcal{V}(a) \, M(t,a)$  which is solution of
\begin{equation}\label{Advectiongen}
\left\{
\begin{array}{l}
\partial_t N(t, a) + C(t) \, \mathcal{V}(a) \partial_a N(t, a) + \mu (a) \, N(t, a) = 0, \qquad t \geq 0, \, a \geq 0\\
N(t,0) = \mathcal{V}(0) f(C(t)), \\
N(0,a)= \mathcal{V}(a) \, M_0(a).
\end{array}
\right.
\end{equation}
Since $\mathcal{V}$ is Lipschitz on $\mathbb{R}_+$ and recalling theorem \ref{th:global2}, we have $C\in C^0(\mathbb{R}_+,\mathbb{R}_+)$. The characteristic through the point $(t, a)\in\mathbb{R}_+^2$ is defined by the solution $\tau \mapsto \mathbf{a}_{(t,a)}(\tau)$ of the following ODE
\begin{equation}\label{Carac}
\left\{
\begin{array}{l}
\dfrac{d}{d\tau}\mathbf{a}_{(t,a)} (\tau)=C(\tau) \, \mathcal{V}(\mathbf{a}_{(t,a)} (\tau)), \quad \tau \in  \mathbb{R}_+, \\
\mathbf{a}_{(t,a)} (t) = a.
\end{array}
\right.
\end{equation}
For a given $\tau_0 \in \mathbb{R}_+$, with the construction of the characteristic $\mathbf{a}_{(t,a)}$, we deduce that 
\begin{equation}\label{eq:Caracadv}
\frac{\mathrm{d}}{\mathrm{d} \tau} \left(N(\tau, \mathbf{a}_{(t,a)} (\tau))) \, e^{\int_{\tau_0}^{\tau} \mu (\mathbf{a}_{(t,a)} (s)) ds }\right) = 0, \quad \tau \in  \mathbb{R}_+.
\end{equation}
The characteristic through $(0,0)$ cuts the $(t,a)$-plane in two regions.

%%%%%%%%%%%%%%%%%%%%%%%%%%%%%%%%%%%%%%%%%%%%%%%%%%%%%%%%%%%%%%%%%%%%%%%%%%%%%%%%%%%%%%%%%%%%%%%%%%%%%%
\begin{figure}
\begin{center}
\includegraphics[scale=0.4]{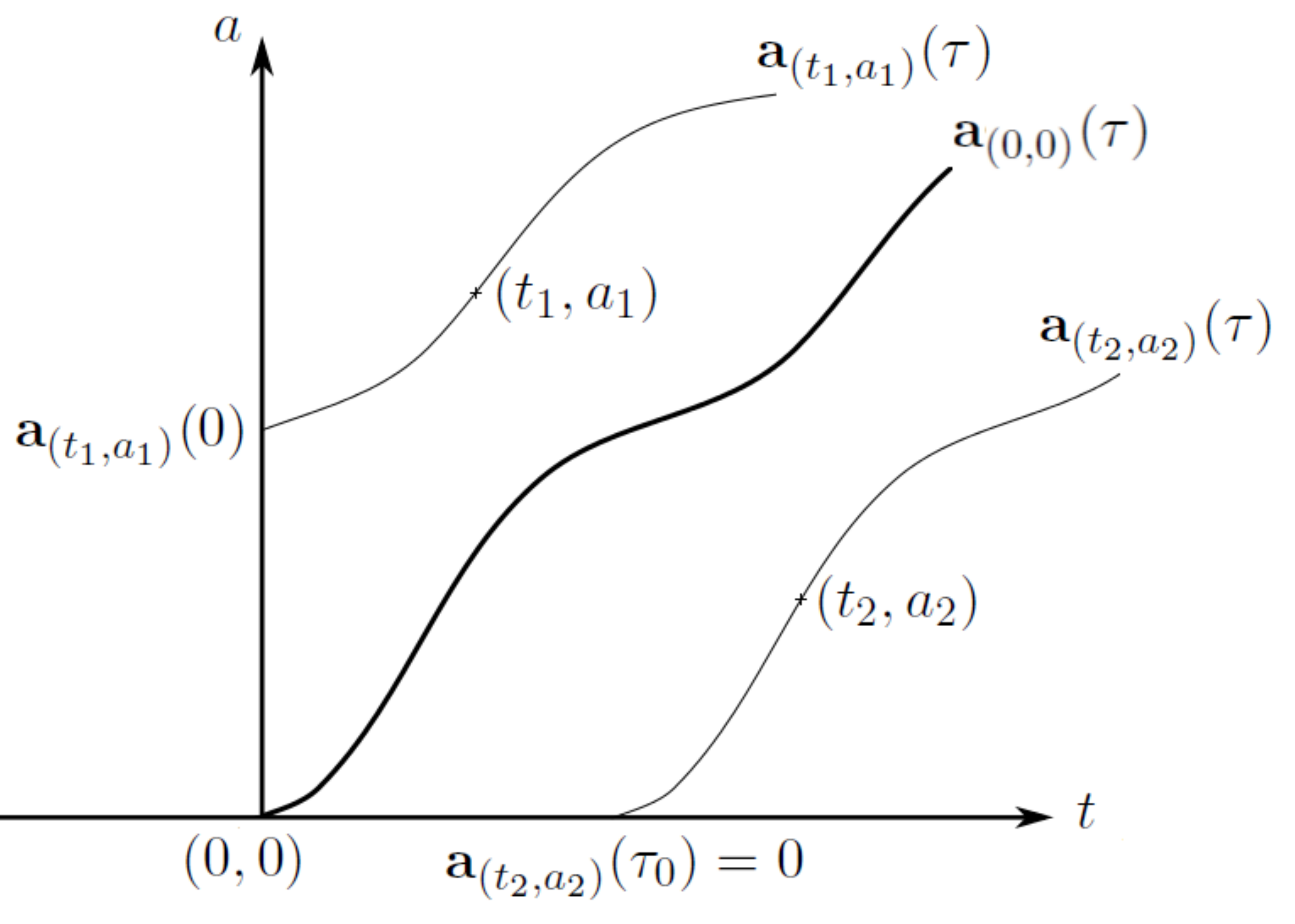}
\end{center}
\end{figure}
%%%%%%%%%%%%%%%%%%%%%%%%%%%%%%%%%%%%%%%%%%%%%%%%%%%%%%%%%%%%%%%%%%%%%%%%%%%%%%%%%%%%%%%%%%%%%%%%%%%%%%

We can define the following one-to-one mapping $\Theta$ on $\mathbb{R}_+$ by $\Theta(a) :=  \int_{0}^{a} \frac{du}{\mathcal{V}(u)}$. With proposition \ref{proposition:C}, we can also define the bijection $h(t):=\int_{0}^{t} C(s) ds$ from $\mathbb{R}_+$ to $\left[0,\int_0^{+\infty} C(s) ds\right[$.

After time integration between $t$ and $\tau_0$, the solution of \eqref{Carac} is such that
\begin{equation*}
\Theta(\mathbf{a}_{(t,a)} (\tau_0)) - \Theta(a)=\int_t^{\tau_0} C(s) \, ds = h(\tau_0)-h(t).
\end{equation*}
We see that $\mathbf{a}_{(t,a)} (\tau_0)\geq \mathbf{a}_{(0,0)} (\tau_0)$ for all $\tau_0 \in \mathbb{R}_+$ iff $\Theta(a) \geq h(t)$. Next, we use the information on the $t=0$ line by taking $\tau_0=0$ for $\Theta(a) \geq h(t)$: $\mathbf{a}_{(t,a)} (0) = \Theta^{-1}\left( \Theta(a)-h(t)\right)$,
and the information on the $a=0$ line by taking $\tau_0$ such that $\mathbf{a}_{(t,a)} (\tau_0)=0$ for $\Theta(a) \leq h(t)$: $\tau_0 = h^{-1}\left( h(t)-\Theta(a) \right)$.
Using \eqref{eq:Caracadv}, we obtain the expression of the solution of system \eqref{Advectiongen} for $(t,a)\in \mathbb{R}_+^2$
\begin{equation*}
N(t,a) = \begin{cases}
N(0,\mathbf{a}_{(t,a)} (0)) \, e^{- \int_{0}^{t} \mu \left(\mathbf{a}_{(t,a)} (s)\right)  ds} \quad\mbox{ if } \Theta(a) \geq h(t), \\
N(\tau_0,0) \, e^{- \int_{\tau_0}^{t} \mu \left(\mathbf{a}_{(t,a)} (s)\right) ds} \qquad \quad \, \mbox{ if } \Theta(a) \leq h(t). 
\end{cases}
\end{equation*}
Hence, for $(t,a) \in \mathbb{R}_+^2$, the solution $M$ of \eqref{Pbcouple} with \eqref{CI}  writes 
\begin{equation}\label{explicitM}
M(t,a) = \begin{cases}
\frac{\mathcal{V} \left(\mathbf{a}_{(t,a)} (0)\right)}{\mathcal{V}(a)} \, M_0 \left(\mathbf{a}_{(t,a)} (0)\right) \, e^{- \int_{0}^{t} \mu \left(\mathbf{a}_{(t,a)} (s)\right)  ds}, \quad\mbox{ if } \Theta(a) \geq h(t), \\
\frac{\mathcal{V}(0)}{\mathcal{V}(a)} \, f \left(C(\tau_0) \right) \, e^{- \int_{\tau_0}^{t} \mu \left(\mathbf{a}_{(t,a)} (s)\right) ds}, \quad \mbox{ if } \Theta(a) \leq h(t). 
\end{cases}
\end{equation}

\begin{lemma}\label{lem:VcarreM}
Assuming that $M(t,0) = f(C(t))$, the solution $(M,C)$ of the system \eqref{Pbcouple} with \eqref{CI} is such that $\mathcal{V}(a)^2 M(t,a) \to 0$ as $a \to +\infty$, for all $t \geq 0$.
\end{lemma}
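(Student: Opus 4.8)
The plan is to read the decay off the explicit representation \eqref{explicitM}. Fix $t \geq 0$. Since $\mathcal{V}$ is positive and Lipschitz, $\frac{1}{\mathcal{V}(u)} \geq \frac{1}{\mathcal{V}(0) + \|\mathcal{V}'\|_{\infty}\, u}$, whose integral over $[0,a]$ diverges as $a \to +\infty$; hence $\Theta(a) = \int_0^a \frac{du}{\mathcal{V}(u)} \to +\infty$. Consequently, for $a$ large enough one has $\Theta(a) \geq h(t)$, so that only the first branch of \eqref{explicitM} is relevant to the limit $a \to +\infty$. Setting $a_0 := \mathbf{a}_{(t,a)}(0) = \Theta^{-1}\!\left(\Theta(a) - h(t)\right)$, the monotonicity of $\Theta$ gives $a_0 \to +\infty$ as $a \to +\infty$.

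Next I would split the quantity of interest into three factors. From the first line of \eqref{explicitM},
$$\mathcal{V}(a)^2 \, M(t,a) = \frac{\mathcal{V}(a)}{\mathcal{V}(a_0)} \cdot \Big(\mathcal{V}(a_0)^2 \, M_0(a_0)\Big) \cdot e^{-\int_0^t \mu(\mathbf{a}_{(t,a)}(s))\,ds}.$$
The exponential factor is bounded by $1$ because $\mu \geq 0$, and the middle factor $\mathcal{V}(a_0)^2 M_0(a_0)$ tends to $0$ as $a_0 \to +\infty$ by the assumption $\lim_{a\to+\infty}\mathcal{V}(a)^2 M_0(a)=0$ in \eqref{hyp:R_2}. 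It thus remains to bound the ratio $\mathcal{V}(a)/\mathcal{V}(a_0)$ uniformly in $a$ (with $t$ fixed). This is the only genuine difficulty, since both $a$ and $a_0$ tend to infinity and $\mathcal{V}$ may be unbounded.

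To control the ratio, the key idea is to follow $\mathcal{V}$ along the characteristic. Setting $g(\tau) := \mathcal{V}(\mathbf{a}_{(t,a)}(\tau))$ and using the characteristic ODE \eqref{Carac}, one finds $g'(\tau) = C(\tau)\,\mathcal{V}'(\mathbf{a}_{(t,a)}(\tau))\,g(\tau)$, and since $g(t) = \mathcal{V}(a)$ and $g(0) = \mathcal{V}(a_0)$, integration gives
$$\frac{\mathcal{V}(a)}{\mathcal{V}(a_0)} = \exp\left(\int_0^t C(\tau)\,\mathcal{V}'(\mathbf{a}_{(t,a)}(\tau))\,d\tau\right) \leq e^{\|\mathcal{V}'\|_{\infty}\, K_{C_0}\, t},$$
where I used $|\mathcal{V}'| \leq \|\mathcal{V}'\|_{\infty}$ and the bound $C \leq K_{C_0}$ from Proposition \ref{proposition:C}. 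This constant is finite and independent of $a$.

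Combining the three bounds yields $\mathcal{V}(a)^2 M(t,a) \leq e^{\|\mathcal{V}'\|_{\infty} K_{C_0}\, t}\,\mathcal{V}(a_0)^2 M_0(a_0)$, whose right-hand side vanishes as $a \to +\infty$, giving the claim. The crux is the logarithmic-derivative computation of the third paragraph; the rest is bookkeeping on the explicit formula \eqref{explicitM}. Note in particular that when $\mathcal{V}$ is constant (the setting of Theorem \ref{asymptotic_V_constant}) the ratio equals $1$ and the argument is immediate.
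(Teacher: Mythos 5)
Your proof is correct and follows essentially the same route as the paper: the same three-factor decomposition of $\mathcal{V}(a)^2 M(t,a)$ from the first branch of \eqref{explicitM}, the bound of the mortality exponential by $1$, the observation that $\mathbf{a}_{(t,a)}(0)\to+\infty$, and the same control of the ratio $\mathcal{V}(a)/\mathcal{V}(\mathbf{a}_{(t,a)}(0))$ by $e^{\|\mathcal{V}'\|_\infty h(t)}$ — your logarithmic-derivative computation along the characteristic is, after the change of variables $u=\mathbf{a}_{(t,a)}(\tau)$, exactly the paper's identity $\mathcal{V}(a)/\mathcal{V}(\mathbf{a}_{(t,a)}(0))=e^{\int_{\mathbf{a}_{(t,a)}(0)}^{a}\mathcal{V}'(u)/\mathcal{V}(u)\,du}$. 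The only cosmetic difference is that you majorize $h(t)=\int_0^t C$ by $K_{C_0}t$, which the paper does not need to do.
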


\begin{proof}
We use the explicit expression \eqref{explicitM} of $M$ to handle this problem. For $t>0$ and for $a>\Theta^{-1}(h(t))$, we have 
\begin{eqnarray*}
\mathcal{V}(a)^2 M(t,a) & = & \mathcal{V}(a) \, \mathcal{V} \left(\mathbf{a}_{(t,a)} (0)\right)\, M_0 \left(\mathbf{a}_{(t,a)} (0)\right) \, e^{- \int_{0}^{t} \mu \left(\mathbf{a}_{(t,a)} (s)\right)  ds} \\
& \leq & \frac{\mathcal{V}(a)}{\mathcal{V} \left(\mathbf{a}_{(t,a)} (0)\right)} \, \mathcal{V} \left(\mathbf{a}_{(t,a)} (0)\right)^2 \, M_0 \left(\mathbf{a}_{(t,a)} (0)\right)
\end{eqnarray*}
%%%%%%%%%%%%%%%%%%%%%%%%%%%%%%%%%%%%%%%%%%%%%%%%%%%%%%%%%%%%%%%%%%%%%%%%%%%%%%%%%%%%%%%%%%%%%%%%%%%%%%%%%%%%
%%%%%% A CONSERVER %%%%
%%%%%%%%%%%%%%%%%%%%%%%%%%%%%%%%%%%%%%%%%%%%%%%%%%%%%%%%%%%%%%%%%%%%%%%%%%%%%%%%%%%%%%%%%%%%%%%%%%%%%%%%%%%%
%Since we take $X=a \, e^{-||\mathcal{V}'||_\infty h(t)} - \frac{\mathcal{V}(0)}{||\mathcal{V}'||_\infty} \left(1-e^{-||\mathcal{V}'||_\infty h(t)}\right)$
%\begin{eqnarray*}
%\Theta(a)-h(t) - \Theta \left(X \right) &  = & \int_X^a \frac{du}{\mathcal{V}(u)} - h(t) \geq \int_X^a \frac{du}{||\mathcal{V}'||_\infty \, u + \mathcal{V}(0)} - h(t) \\
%& \geq & \frac{1}{||\mathcal{V}'||_\infty} \log \left( \frac{||\mathcal{V}'||_\infty \, a + \mathcal{V}(0)}{||\mathcal{V}'||_\infty \, X + \mathcal{V}(0)} \right) - h(t) \geq 0.
%\end{eqnarray*}
%We have then for $a$ great enough
%\begin{equation*}
%a \, e^{-||\mathcal{V}'||_\infty h(t)} - \frac{\mathcal{V}(0)}{||\mathcal{V}'||_\infty} \left(1-e^{-||\mathcal{V}'||_\infty h(t)}\right) \leq \mathbf{a}_{(t,a)} (0)= \Theta^{-1}\left( \Theta(a)-h(t)\right) \leq a.
%\end{equation*}
%%%%%%%%%%%%%%%%%%%%%%%%%%%%%%%%%%%%%%%%%%%%%%%%%%%%%%%%%%%%%%%%%%%%%%%%%%%%%%%%%%%%%%%%%%%%%%%%%%%%%%%%%%%%
%%%%%%%%%%%%%%%%%%%%%%%%%%%%%%%%%%%%%%%%%%%%%%%%%%%%%%%%%%%%%%%%%%%%%%%%%%%%%%%%%%%%%%%%%%%%%%%%%%%%%%%%%%%%
%%%%%%%%%%%%%%%%%%%%%%%%%%%%%%%%%%%%%%%%%%%%%%%%%%%%%%%%%%%%%%%%%%%%%%%%%%%%%%%%%%%%%%%%%%%%%%%%%%%%%%%%%%%%
A computation gives that $\mathbf{a}_{(t,a)} (0)= \Theta^{-1}\left( \Theta(a)-h(t)\right) \rightarrow + \infty$ as $a \rightarrow + \infty$. We also have
\begin{eqnarray*}
 \frac{\mathcal{V}(a)}{\mathcal{V} \left(\mathbf{a}_{(t,a)} (0)\right)} & = & e^{\int_{\mathbf{a}_{(t,a)} (0)}^a \frac{\mathcal{V}'(u)}{\mathcal{V}(u)} du} 
  \leq  e^{||\mathcal{V}'||_\infty \, (\Theta(a) - \Theta (\mathbf{a}_{(t,a)} (0)))}
  =  e^{||\mathcal{V}'||_\infty h(t)}.
\end{eqnarray*}
%We use hypothesis \ref{hyp:add} made on $M_0$ to conclude.
%\mathbf{a}_{(t,a)} (0) \geq a \, \exp \left(-||\mathcal{V}'||_\infty h(t)\right) - \frac{\mathcal{V}(0)}{||\mathcal{V}'||_\infty} \left(1-\exp \left(-||\mathcal{V}'||_\infty h(t)\right)\right)
\end{proof}

%%%%%%%%%%%%%%%%%%%%%%%%%%%%%%%%%%%%%%%%%%%%%%%%%%%%%%%%%%%%%%%%%%%%%%%%%%%%%%%%%%%%%%%%%%%%%%%%%%%%%%%%%%%%

\paragraph{Weak compactness in $L^1$}
\noindent

%Let us start with some explanations on the way we handle the convergence in $L^1$. Using the method of characteristics, the solution $M$ of the system \eqref{Pbcouple} has an expression depending on the couple $\left(C,\int_0^{+\infty} B(a) \, M(.,a) \, da\right)$. In some particular cases, by reducing the system \eqref{Pbcouple} to an ODE system, it is possible to prove the convergence of this latter couple. The convergence almost everywhere of  $M$ then follows. 

In order to establish the $L^1$ convergence of $M$, we need supplementary conditions for weak compactness of a bounded sequence to handle the non reflexivity of $L^1$.
% It is to be noticed that the following result which does not require more particular assumptions than the ones we have considered in the existence part.

\begin{proposition}\label{th:weakcompact}
%Under assumptions \ref{hyp:Mbis}, \ref{hyp:R} and \ref{hyp:add}, let $M$ be the solution of system \eqref{Pbcouple} with \eqref{CI}, then
Assuming that $M(t,0) = f(C(t))$, the solution $(M,C)$ of system \eqref{Pbcouple} with \eqref{CI} is such that $(M(t,.))_{t\geq 0}$ is weakly compact in $L^1 (\mathbb{R}_+,\mathbb{R}_+)$.
\end{proposition}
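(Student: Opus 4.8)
The plan is to invoke the Dunford--Pettis theorem, which characterizes relative weak compactness in $L^1(\mathbb{R}_+)$: a family bounded in $L^1$ is relatively weakly compact if and only if it is both \emph{uniformly integrable} (for every $\eps>0$ there is $\delta>0$ with $\int_E M(t,a)\,da<\eps$ whenever $|E|<\delta$, uniformly in $t$) and \emph{tight} (no mass escapes to $a=+\infty$, i.e. $\sup_{t\ge0}\int_R^{+\infty}M(t,a)\,da\to0$ as $R\to+\infty$). Thus I reduce the statement to three verifications: a uniform-in-time $L^1$ bound, tightness, and uniform integrability. The two structural facts I rely on throughout are that $C$ is bounded by $K_{C_0,k}$ and strictly positive (Proposition \ref{proposition:C}), and that, in the present case, the incoming flux $M(t,0)=f(C(t))$ is globally bounded because $f$ is Lipschitz and $C$ takes values in $[0,K_{C_0,k}]$.

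For the $L^1$ bound and tightness I would control the first moment. Multiplying the transport equation \eqref{Aprioribound2} by $a$ and integrating in $a$, the boundary term at $a=0$ drops because of the factor $a$, while the flux at $+\infty$ vanishes thanks to Lemma \ref{lem:VcarreM} together with $\lim_{a\to+\infty}\mathcal{V}(a)^2M_0(a)=0$ from \eqref{hyp:R_2}; after integration by parts this yields
\[
\frac{d}{dt}\int_0^{+\infty} a\,M(t,a)\,da = C(t)\int_0^{+\infty}\mathcal{V}(a)\,M(t,a)\,da-\int_0^{+\infty}\mu(a)\,a\,M(t,a)\,da .
\]
Using $\inf_a\mu>0$ and $C\le K_{C_0,k}$ (from \eqref{hyp:R_2} and Proposition \ref{proposition:C}), this closes a Gronwall inequality giving $\sup_{t\ge0}\int_0^{+\infty}(1+a)\,M(t,a)\,da<+\infty$. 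The $L^1$ bound is then immediate, and tightness follows from the Markov inequality $\int_R^{+\infty}M(t,a)\,da\le \frac1R\int_0^{+\infty}a\,M(t,a)\,da$.

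For uniform integrability I would use the de la Vallée Poussin criterion: it suffices to exhibit a convex superlinear $G$ with $\sup_{t\ge0}\int_0^{+\infty}G(M(t,a))\,da<+\infty$, and $G(r)=r\log(1+r)$ is the natural choice. I therefore derive an entropy estimate by multiplying the equation by $1+\log M$ and integrating in $a$: the transport term becomes a boundary flux at $a=0$, controlled by $M(t,0)=f(C(t))$ and hence bounded, plus a contribution at $+\infty$ that vanishes again by Lemma \ref{lem:VcarreM} and \eqref{hyp:R_2}, while the reaction term produces the dissipative contribution $-\int\mu\,M\log M$ with $\inf\mu>0$. This gives a linear differential inequality for $t\mapsto\int_0^{+\infty} M(t,a)\log M(t,a)\,da$ that closes globally in time, delivering the required $L\log L$ bound and hence uniform integrability.

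The main obstacle is the entropy estimate: rigorously justifying the integration by parts and the vanishing of the flux at infinity (precisely where $\lim_{a\to+\infty}\mathcal{V}(a)^2M_0(a)=0$ and Lemma \ref{lem:VcarreM} are needed), and controlling the non-dissipative drift term $C\,\mathcal{V}'\,M(1+\log M)$ when $\mathcal{V}$ is not constant. As a robust alternative I would exploit the explicit representation \eqref{explicitM}, splitting $\mathbb{R}_+$ into the boundary-fed region $\{\Theta(a)\le h(t)\}$, on which $M$ is pointwise bounded by $\frac{\mathcal{V}(0)}{\inf\mathcal{V}}\sup f$ and hence trivially equi-integrable, and the initial-data-fed region $\{\Theta(a)\ge h(t)\}$, on which the change of variables $a\mapsto\mathbf{a}_{(t,a)}(0)$ transports the equi-integrability of $M_0$ (a consequence of $M_0\log M_0\in L^1$) up to the factor $\mathcal{V}(\mathbf{a}_{(t,a)}(0))/\mathcal{V}(a)$, which is controlled exactly as in the proof of Lemma \ref{lem:VcarreM}.
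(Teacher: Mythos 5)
Your proposal follows essentially the same route as the paper: Dunford--Pettis, a uniform first-moment bound for tightness, and an $L\log L$ entropy estimate (obtained by multiplying the equation by $1+\log M$, with the flux at infinity killed via Lemma \ref{lem:VcarreM} and \eqref{hyp:R_2}) for uniform integrability. The only point you gloss over is that $-\int \mu\, M\log M$ is not genuinely dissipative where $M<1$; the paper repairs this by splitting off $\tilde M = M\,\mathds{1}_{M\le 1}$ and invoking Lemma \ref{polarisation} to control $-\int \tilde M\log\tilde M$ by the first moment, a standard fix consistent with the obstacle you already flag.
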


We first recall the Dunford-Pettis theorem, see \cite{dunford1940linear,dunford1958linear,Bourbaki}.

\begin{theorem}\emph{\bf [Dunford-Pettis]}\label{th:Dunford}
Let $(f_t)_{t \geq 0}$ be a bounded sequence in $L^1 (\mathbb{R}_+,\mathbb{R}_+)$. The sequence
$(f_t)_{t \geq 0}$ is weakly compact in $L^1 (\mathbb{R}_+,\mathbb{R}_+)$ iff $(f_t)_{t \geq 0}$ is uniformly integrable, \textit{i. e.}
\begin{equation}\label{critere:concentration}
\forall \, \epsilon > 0, \exists \, \eta > 0 \mbox{ such that } \forall \,  \mathcal{A} \subset \mathbb{R}_+ \mbox{ with } mes(\mathcal{A}) \leq \eta, \mbox{ then } \sup_{t \in \mathbb{R}_+} \int_{\mathcal{A}} f_t(a) da \leq \epsilon,
\end{equation}
\begin{equation}\label{critere:fuite}
\lim_{A \rightarrow + \infty} \, \sup_{t \in \mathbb{R}_+} \int_{A}^{+ \infty} f_t(a) da =0.
\end{equation}
\end{theorem}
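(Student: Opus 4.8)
The plan is to prove the two implications of this classical Dunford–Pettis characterisation separately, regarding $(f_t)_{t\ge 0}$ as a subset of the Banach space $L^1(\mathbb{R}_+)$, reading \emph{weakly compact} as relatively weakly sequentially compact (legitimate by the Eberlein–Šmulian theorem), and using the duality $\langle f,\phi\rangle=\int_0^{+\infty} f\,\phi\,da$ with $\phi\in L^\infty(\mathbb{R}_+)=(L^1)^*$. Boundedness, $\sup_t\|f_t\|_{1}=:M<\infty$, is part of the standing hypothesis and will be used in the sufficiency direction.

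\emph{Necessity.} Assume the family is relatively weakly sequentially compact and argue by contradiction. Suppose (\ref{critere:concentration}) fails: there are $\epsilon_0>0$, sets $A_j$ with $mes(A_j)\to0$ and indices $t_j$ with $\int_{A_j} f_{t_j}\ge\epsilon_0$. Extracting a weakly convergent subsequence $f_{t_{j_k}}\rightharpoonup f$ and testing against indicators $\mathds{1}_E\in L^\infty$, the set functions $E\mapsto\int_E f_{t_{j_k}}$ converge for every measurable $E$; by the Vitali–Hahn–Saks theorem they are then uniformly absolutely continuous, contradicting $\int_{A_{j_k}} f_{t_{j_k}}\ge\epsilon_0$ with $mes(A_{j_k})\to0$. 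The same setwise-convergence mechanism applied to the tails $[A,+\infty)$ rules out failure of (\ref{critere:fuite}). Alternatively one can disjointify the $A_j$ and exhibit an $\ell^1$-like sequence inside the family, which by Schur's theorem admits no weakly convergent subsequence.

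\emph{Sufficiency.} This is the substantive direction, where both hypotheses enter. Given a sequence $(f_n)$ in the family, I would truncate: for $m\in\mathbb{N}$ set $g_n^m=\min(f_n,m)\,\mathds{1}_{[0,m]}$. Each family $(g_n^m)_n$ is bounded in the reflexive space $L^2([0,m])$, hence has a weakly convergent subsequence; a diagonal extraction over $m$ yields one subsequence $(f_{n_k})$ along which $\langle g_{n_k}^m,\phi\rangle$ converges for every $m$ and every $\phi\in L^\infty$ supported in $[0,m]$. The two conditions then let me pass from truncations to full functions uniformly in $n$: by (\ref{critere:fuite}), $\|f_n-f_n\mathds{1}_{[0,m]}\|_{1}$ is small for large $m$; by (\ref{critere:concentration}) together with $mes\{f_n>m\}\le M/m\to0$, the excess $\|f_n\mathds{1}_{[0,m]}-g_n^m\|_{1}=\int_{\{f_n>m\}\cap[0,m]}(f_n-m)$ is small uniformly in $n$. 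Hence $(f_{n_k})$ is weakly Cauchy against every $\phi\in L^\infty$.

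\emph{The limit and the main obstacle.} It remains to represent the limit functional $\ell(\phi)=\lim_k\langle f_{n_k},\phi\rangle$, a priori only an element of $(L^\infty)^*$, as integration against some $f\in L^1(\mathbb{R}_+)$; this is the crux. The set function $E\mapsto\ell(\mathds{1}_E)$ is finitely additive; equi-absolute continuity (\ref{critere:concentration}) forces it to vanish on null sets, and (\ref{critere:concentration})–(\ref{critere:fuite}) together upgrade it to a countably additive finite measure $\mu_\infty\ll da$. Radon–Nikodym then gives $d\mu_\infty=f\,da$ with $f\in L^1(\mathbb{R}_+)$, and $\ell(\phi)=\int f\,\phi$, first for simple $\phi$ and then for all $\phi\in L^\infty$ by density and the uniform bound $M$. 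Therefore $f_{n_k}\rightharpoonup f$ in $L^1$, establishing relative weak sequential compactness. I expect the passage from finite to countable additivity of $\ell$ — equivalently, ruling out a purely finitely additive, ``escaping'' part of the limit — to be the delicate step, and it is precisely what conditions (\ref{critere:concentration}) and (\ref{critere:fuite}) are designed to supply.
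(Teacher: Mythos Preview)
The paper does not prove this statement at all: Theorem~\ref{th:Dunford} is merely \emph{recalled} as a classical result, with references to Dunford--Schwartz and Bourbaki, and is then used as a black box (notably in Proposition~\ref{proposition:pratique} and Lemma~\ref{lem:Egorov}). So there is no ``paper's own proof'' against which to compare your attempt.

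That said, your sketch is a correct outline of one standard route. The necessity direction via Vitali--Hahn--Saks is clean. For sufficiency, the truncation-plus-diagonal argument is fine; the one point worth tightening is the identification of the limit. You correctly flag the passage from finite to countable additivity of $E\mapsto\ell(\mathds{1}_E)$ as the crux: to make it rigorous, take a decreasing sequence $E_j\downarrow\emptyset$ of finite-measure sets and split each $E_j$ into $E_j\cap[0,A]$ and $E_j\cap(A,\infty)$; condition~(\ref{critere:fuite}) controls the second piece uniformly, and since $mes(E_j\cap[0,A])\to 0$, condition~(\ref{critere:concentration}) controls the first, giving $\ell(\mathds{1}_{E_j})\to 0$ and hence $\sigma$-additivity. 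With that detail filled in, your argument goes through.
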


The criterion \eqref{critere:concentration} avoids concentration phenomena (as in  $f_{\epsilon} (x) = \frac{1}{2 \epsilon} \chi_{(-\epsilon,\epsilon)} (x)$ for $\epsilon>0$) and the criterion \eqref{critere:fuite} prevents the solution to "escape to $\infty$".

\begin{remark}
We can check the criterion \eqref{critere:concentration} by getting
\begin{equation}\label{critere:concentration2}
\lim_{A \rightarrow + \infty} \, \sup_{t \in \mathbb{R}_+} \int_{f_t (a) >A} f_t(a) da = 0.
\end{equation}
\end{remark}

In order to have weak compactness for $(M(t,.)_{t \geq 0}$, we will use the following convenient proposition.

\begin{proposition}\label{proposition:pratique} 
Let $(f_t)_{t \geq 0}$ be a bounded sequence in $L^1 (\mathbb{R}_+,\mathbb{R}_+)$ and let $w,G:\mathbb{R}_+ \rightarrow \mathbb{R}_+$ be such that $\lim_{a \rightarrow + \infty} w(a) = + \infty$ and  $\lim_{a\rightarrow + \infty} \frac{G(a)}{a} = + \infty$. Let us assume that $\sup_{t \in \mathbb{R}_+} \int_{0}^{+ \infty } (1 + w(a)) f_t(a) + G(f_t(a)) \, da < \infty$, then the sequence
$(f_t)_{t \geq 0}$ is weakly compact in $L^1 (\mathbb{R}_+,\mathbb{R}_+)$.
\end{proposition}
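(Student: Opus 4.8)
The plan is to verify the two criteria of the Dunford--Pettis theorem (Theorem \ref{th:Dunford}), namely the tightness criterion \eqref{critere:fuite} and the uniform integrability criterion \eqref{critere:concentration}, using the moment bounds supplied by the hypothesis. Since $(f_t)_{t\geq 0}$ is already assumed bounded in $L^1(\mathbb{R}_+,\mathbb{R}_+)$, only these two criteria remain to be checked, uniformly in $t$. Because all the integrands are nonnegative, the single combined bound in the statement splits into the two separate finite bounds $M:=\sup_t \int_0^{+\infty}(1+w(a))\,f_t(a)\,da<\infty$ and $N:=\sup_t\int_0^{+\infty}G(f_t(a))\,da<\infty$. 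I would then exploit the weight $w$ to control the tails and the superlinear function $G$ to prevent concentration, in the spirit of the de la Vall\'ee--Poussin criterion.

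First I would handle the tail criterion \eqref{critere:fuite}. Using that $w(a)\to+\infty$, given $\epsilon>0$ one can pick $A_0$ with $w(a)\geq M/\epsilon$ for all $a\geq A_0$. Then for every $A\geq A_0$ and every $t$,
\begin{equation*}
\int_A^{+\infty} f_t(a)\,da \leq \frac{\epsilon}{M}\int_A^{+\infty} w(a)\,f_t(a)\,da \leq \epsilon,
\end{equation*}
which is exactly the bound required by \eqref{critere:fuite}, uniformly in $t$.

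Next, to establish uniform integrability I would use the equivalent formulation \eqref{critere:concentration2} recalled in the Remark, which avoids working directly with arbitrary small-measure sets. Using $G(a)/a\to+\infty$, for $\epsilon>0$ choose $A_0$ so that $a\leq \frac{\epsilon}{N}\,G(a)$ whenever $a\geq A_0$. Applying this pointwise with the value $f_t(a)$ on the level set $\{a : f_t(a)>A_0\}$ gives, uniformly in $t$,
\begin{equation*}
\int_{\{f_t(a)>A_0\}} f_t(a)\,da \leq \frac{\epsilon}{N}\int_{\{f_t(a)>A_0\}} G(f_t(a))\,da \leq \epsilon,
\end{equation*}
which is \eqref{critere:concentration2} and hence yields \eqref{critere:concentration}. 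With both Dunford--Pettis criteria verified, Theorem \ref{th:Dunford} delivers the weak compactness of $(f_t)_{t\geq 0}$ in $L^1(\mathbb{R}_+,\mathbb{R}_+)$.

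The argument is essentially a direct application of de la Vall\'ee--Poussin's criterion, so I do not expect a genuine analytic obstacle; the only point requiring a little care is the passage from \eqref{critere:concentration2} to \eqref{critere:concentration}, which follows by splitting the integral over a small-measure set $\mathcal{A}$ according to whether $f_t\leq A_0$ or $f_t>A_0$, bounding the first part by $A_0\,\mathrm{mes}(\mathcal{A})$ and the second by \eqref{critere:concentration2}, and then choosing the measure threshold $\eta$ of order $\epsilon/A_0$.
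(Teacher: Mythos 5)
Your proof is correct and follows essentially the same route as the paper: both verify the two Dunford--Pettis criteria by using the weight $w$ to control the tails and the superlinear function $G$ to rule out concentration via \eqref{critere:concentration2}. The only difference is that you spell out the passage from \eqref{critere:concentration2} to \eqref{critere:concentration}, which the paper leaves implicit in its Remark.
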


\begin{proof}
The limit $\lim_{a\rightarrow + \infty} \frac{G(a)}{a} = + \infty$ gives that $\forall \epsilon > 0$, $\exists A_0 > 0$ such that $\forall A \geq A_0$, $\frac{G(A)}{A} \geq \frac{1}{\epsilon}$. For $A \geq A_0$, we use the fact that $\sup_n \int_{0}^{+ \infty } G(f_n(a)) \, da \leq K_G$ to obtain that
$\int_{f_t(a)>A} f_t(a) da \leq \sup_{t \in \mathbb{R}_+} \int_{f_t(a)>A}  \epsilon \, G(f_t(a)) \, da  \leq  \epsilon \sup_{t \in \mathbb{R}_+} \int_{0}^{+ \infty }  G(f_t(a)) \, da  \leq \epsilon \, K_G$.
We  checking the criterion \eqref{critere:concentration2} to conclude, $\lim_{A \rightarrow + \infty } \sup_{t \in \mathbb{R}_+} \int_{f_t(a)>A} f_t(a) \, da =0$.

Similarly, the limit $\lim_{a \rightarrow + \infty} w(a) = + \infty$ provides that $\forall \epsilon > 0$, $\exists A_0 > 0$ such that $\forall A \geq A_0$, $w(A) \geq \frac{1}{\epsilon}$.
For $A \geq A_0$, we use $\sup_{t \in \mathbb{R}_+} \int_{0}^{+ \infty } w(a) \, f_t(a) \, da \leq K_w$ to get that
$\sup_{t \in \mathbb{R}_+} \int_{A}^{+ \infty} f_t(a) da \leq \epsilon \, \sup_{t \in \mathbb{R}_+} \int_{0}^{+ \infty } w(a) \, f_t(a) \, da \leq \epsilon \, K_w$.
\end{proof}

\begin{proof}[Proof of proposition \ref{th:weakcompact}]

We prove this result by using proposition \ref{proposition:pratique} with $w(a)=a$ and $G(a)=a \, | \log (a) | $ for $a \in \mathbb{R}_+$. We break the proof into several lemma.
 
\begin{lemma}\label{lem:weakL1} 
Let $(M,C)$ be the solution of system \eqref{Pbcouple} with \eqref{CI}, then we have $\sup_{t \in \mathbb{R}_+} \int_{0}^{+ \infty } M(t,a) da < \infty$ and $\sup_{t \in \mathbb{R}_+} \int_{0}^{+ \infty } a \, M(t,a) da < \infty$.
\end{lemma}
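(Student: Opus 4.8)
The plan is to control the two moments successively, exactly as announced before Theorem~\ref{th:global2}: first the mass, then the first moment, each via a linear differential inequality closed by Gronwall. First I would bound $\mathcal{M}(t):=\int_0^{+\infty}M(t,a)\,da$. Integrating the transport equation of \eqref{Pbcouple} in $a$ and using that $\int_0^{+\infty}\partial_a(\mathcal{V}(a)M(t,a))\,da=\lim_{a\to+\infty}\mathcal{V}(a)M(t,a)-\mathcal{V}(0)M(t,0)$, the boundary term at infinity vanishes by Lemma~\ref{lem:VcarreM}: since $\mathcal{V}(a)^2M(t,a)\to0$ and $\inf\mathcal{V}>0$ by \eqref{hyp:R_2}, one has $\mathcal{V}(a)M(t,a)\to0$. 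This yields $\mathcal{M}'(t)=C(t)\,\mathcal{V}(0)\,M(t,0)-\int_0^{+\infty}\mu(a)M(t,a)\,da$. Proposition~\ref{proposition:C} bounds $C(t)\le K_{C_0}$, so $M(t,0)=f(C(t))\le\sup_{x\in[0,K_{C_0}]}f(x)=:\bar f<\infty$, while $\inf\mu>0$ from \eqref{hyp:R_2} gives $\int\mu M\ge(\inf\mu)\,\mathcal{M}(t)$. Hence $\mathcal{M}'(t)\le K_{C_0}\,\mathcal{V}(0)\,\bar f-(\inf\mu)\,\mathcal{M}(t)$, and a linear comparison (Gronwall) argument produces $\sup_t\mathcal{M}(t)\le\max\!\big(\mathcal{M}(0),\,K_{C_0}\mathcal{V}(0)\bar f/\inf\mu\big)<\infty$.

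For the first moment $\mathcal{M}_1(t):=\int_0^{+\infty}aM(t,a)\,da$ I would not test directly against the unbounded weight $a$, since the pointwise decay $aM(t,a)\to0$ is not readily available. Instead I test against the truncated weight $\phi_R(a):=\min(a,R)$, which is bounded, Lipschitz and vanishes at $0$. Testing the equation against $\phi_R$ and integrating by parts, the drift boundary terms vanish (at the left end because $\phi_R(0)=0$, at the right end because $\mathcal{V}M\to0$ with $\phi_R$ bounded), producing $\frac{d}{dt}\int\phi_R M=C(t)\int_0^{+\infty}\phi_R'(a)\mathcal{V}(a)M\,da-\int\phi_R\,\mu M$. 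Since $\phi_R'=\mathbf{1}_{(0,R)}$ and $\mathcal{V}$ is constant in the present setting, $\int_0^R\mathcal{V}M\le\mathcal{V}\sup_t\mathcal{M}(t)$ is finite by the first step, while $\int\phi_R\,\mu M\ge(\inf\mu)\int\phi_R M$. This gives $\frac{d}{dt}\int\phi_R M\le K_{C_0}\mathcal{V}\sup_t\mathcal{M}(t)-(\inf\mu)\int\phi_R M$, whose Gronwall bound is independent of $R$ and $t$, the initial value being controlled by $\int aM_0\,da<\infty$ from \eqref{hyp:R_2}. Letting $R\to+\infty$ by monotone convergence then yields $\sup_t\mathcal{M}_1(t)<\infty$.

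The main obstacle is the rigorous justification of the integrations by parts at $a=+\infty$. For the mass it is precisely Lemma~\ref{lem:VcarreM} that kills the boundary contribution; for the first moment, since no comparable pointwise control of $aM$ is at hand, the truncation $\phi_R$ is essential, so that the estimate is first obtained for bounded weights and only afterwards passed to the limit. Apart from this, the argument is structurally a pair of linear differential inequalities: the coupling enters only through $C$, which is globally bounded by Proposition~\ref{proposition:C}, so the forcing terms are genuine constants, and the dissipativity $\inf\mu>0$ from \eqref{hyp:R_2} is exactly what furnishes the negative coefficient needed to close each Gronwall estimate with a bound uniform in $t$.
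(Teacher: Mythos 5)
Your bound on the mass $\mathcal{M}(t)$ is essentially the paper's: integrate in $a$, kill the boundary flux at infinity, bound $M(t,0)=f(C(t))$ via $C\le K_{C_0,k}$, and use $\inf\mu>0$ to close a linear Gronwall inequality. The first-moment step, however, contains a genuine gap. You write that ``$\mathcal{V}$ is constant in the present setting'' in order to bound the drift contribution $C(t)\int_0^{R}\mathcal{V}(a)M(t,a)\,da$ by $K_{C_0,k}\,\mathcal{V}\,\sup_t\mathcal{M}(t)$. But the lemma is stated and used under the standing hypotheses of the section, where $\mathcal{V}$ is only Lipschitz with $\inf\mathcal{V}>0$ (Proposition~\ref{th:weakcompact} and Lemma~\ref{thmain:L1convergence} explicitly invoke it with $\|\mathcal{V}'\|_\infty>0$). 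For such $\mathcal{V}$ one only has $\mathcal{V}(a)\le\mathcal{V}(0)+\|\mathcal{V}'\|_\infty a$, so your differential inequality becomes $\frac{d}{dt}\mathcal{M}_1\le K_{C_0,k}\mathcal{V}(0)\sup_t\mathcal{M}+\bigl(K_{C_0,k}\|\mathcal{V}'\|_\infty-\inf\mu\bigr)\mathcal{M}_1$, which yields a bound uniform in $t$ only under the smallness condition $K_{C_0,k}\|\mathcal{V}'\|_\infty<\inf\mu$ that is not assumed.

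The paper closes this step by a cancellation you did not use: multiplying the equation by $a$ and integrating gives $\frac{d}{dt}\int aM\,da = C(t)\int\mathcal{V}(a)M\,da-\int a\mu M\,da$, and the problematic term $C(t)\int\mathcal{V}(a)M\,da$ appears with the opposite sign in $C'(t)=R(C(t))-C(t)\int\mathcal{V}(a)M\,da$. Adding the two, i.e.\ tracking the total lipid $\int aM\,da+C(t)$, eliminates that term exactly, leaving $\frac{d}{dt}\bigl(\int aM\,da+C\bigr)+\inf\mu\,\bigl(\int aM\,da+C\bigr)\le R(C(t))+\inf\mu\,C(t)$, whose right-hand side is bounded since $C\le K_{C_0,k}$; Gronwall then concludes with no restriction on $\mathcal{V}$. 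Your truncation by $\phi_R=\min(a,R)$ is a sensible device for making the integration by parts rigorous (the paper is informal on the boundary term at infinity), and it can be combined with the paper's cancellation, but as written your argument proves the first-moment bound only when $\mathcal{V}$ is bounded, which is strictly less than what the lemma asserts and what its later applications require.
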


\begin{proof}
We integrate in space the equation \eqref{Pbcouple}
\begin{eqnarray*}
\frac{d}{dt} \int_{0}^{+ \infty } M(t, a) \, da +  \inf_{x \geq 0} \mu(x) \, \int_{0}^{+ \infty } M(t, a) \, da 
& \leq & C(t)  \, \mathcal{V}(0) \, f \left(C(t) \right) .
\end{eqnarray*}
We use the upper bound on $C$ to conclude with the Gronwall lemma that $\sup_{t \in \mathbb{R}_+} \int_{0}^{+ \infty } M(t,a) da < \infty$. In order to bound the first momentum, we multiply by $a \in \mathbb{R}_+$ the equation \eqref{Pbcouple}, then we integrate in space on $\mathbb{R}_+$ to get
\begin{eqnarray*}
\dfrac{d}{dt}\left( \int_{0}^{+ \infty } a \, M(t, a) \, da + C(t) \right) & + & \inf_{x \geq 0} \mu(x) \, \left(\int_{0}^{+ \infty } a \, M(t, a) \, da + C(t) \right) \\
& \leq & R(C(t)) + C(t) \, \inf_{x \geq 0} \mu(x) .
\end{eqnarray*}
Since $C$ is bounded on $\mathbb{R}_+$, we can conclude with the Gronwall lemma.
\end{proof}

We recall the following classical result. 
Its proof can be found in \cite{BDP} or \cite{Siam_CHMV} e.g.
\begin{lemma}\label{polarisation}
Let $f \in L^1 (\mathbb{R}_+;\mathbb{R}_+)$ be such that $\int_{0}^{+ \infty } (1+a) \, f(a) \, da < \infty$ and  $\int_{0}^{+ \infty } f(a) \, \log(f(a)) \, da < \infty$,
then $f \, \log(f) \in L^1 (\mathbb{R}_+,\mathbb{R})$ and 
$$\int_{0}^{+ \infty } f(a) \, |\log(f(a))| da \leq \int_{0}^{+ \infty } f(a) \, (\log(f(a)) + \delta \, a) da + \frac{4}{\delta \, e}, \quad \forall \delta > 0.$$

Moreover, the following inequality holds true 
$-\int_{0}^{+ \infty } \tilde{f}(a) \, \log(\tilde{f}(a)) da \leq \delta \, \int_{0}^{+ \infty } a \, \tilde{f}(a) da + \frac{1}{\delta \, e}$ with $\tilde{f} = f \, 1_{f \leq 1}$, $\forall \delta > 0$.
\end{lemma}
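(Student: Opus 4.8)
The plan is to reduce the first (full) inequality to the second one, and to obtain the second inequality from a single pointwise bound integrated against an exponential weight. I would therefore establish the second assertion first and deduce everything else from it.

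For the second inequality, the elementary estimate I would use is that for all $x\geq 0$ and $y>0$ one has $x\,\log(y/x)\leq y/e$, which follows by maximizing $x\mapsto x\log(y/x)$ over $x$ (the maximal value $y/e$ is attained at $x=y/e$, and the case $x=0$ is trivial by continuity). Applying this pointwise with $x=\tilde f(a)$ and $y=e^{-\delta a}$, and writing $-\tilde f\log\tilde f=\tilde f\log(e^{-\delta a}/\tilde f)+\delta a\,\tilde f$, I would obtain on the set $\{f\le 1\}$ the pointwise bound $-\tilde f(a)\log\tilde f(a)\leq \tfrac{1}{e}e^{-\delta a}+\delta a\,\tilde f(a)$, both sides vanishing where $\tilde f=0$. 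Integrating in $a$ over $\mathbb{R}_+$ and using $\int_0^{+\infty}e^{-\delta a}\,da=1/\delta$ then yields exactly $-\int_0^{+\infty}\tilde f\log\tilde f\,da\leq \delta\int_0^{+\infty}a\,\tilde f\,da+\frac{1}{\delta e}$.

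Next, for the first inequality I would split according to the sign of $\log f$. Since $\tilde f=f\,1_{f\le 1}$, on $\{f\le 1\}$ one has $f|\log f|=-f\log f$ while on $\{f>1\}$ one has $f|\log f|=f\log f$; hence $\int f|\log f|=\int f\log f+2\int_{\{f\le1\}}(-f\log f)=\int f\log f+2\bigl(-\int\tilde f\log\tilde f\bigr)$. Applying the second inequality with parameter $\delta/2$ and bounding $\int a\,\tilde f\le\int a\,f$ then gives $\int f|\log f|\le\int f(\log f+\delta a)+\frac{4}{\delta e}$, which is the claimed bound.

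Finally, to justify $f\log f\in L^1$, I would note that the negative part of $f\log f$ is $(-f\log f)\,1_{f\le1}=-\tilde f\log\tilde f$, whose integral is finite by the second inequality together with the first-moment hypothesis $\int(1+a)\,f\,da<\infty$; combined with the assumption $\int f\log f\,da<\infty$, which then forces the positive part $\int_{\{f>1\}}f\log f$ to be finite as well, this shows $f\log f\in L^1(\mathbb{R}_+)$. I do not expect a genuine obstacle here, since this is a classical entropy estimate: the pointwise inequality and the sign decomposition are routine, and the only point requiring care is precisely the integrability at infinity, where the finiteness of the first moment is what controls the entropy of the small values of $f$.
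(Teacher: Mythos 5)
Your proof is correct and complete. The paper does not actually prove this lemma --- it only cites \cite{BDP} and \cite{Siam_CHMV} for it --- and your argument is the standard one behind those references: the pointwise Young-type bound $x\log(y/x)\le y/e$ applied with the exponential weight $y=e^{-\delta a}$, followed by the sign decomposition $\int f|\log f| = \int f\log f + 2\int_{\{f\le 1\}}(-f\log f)$. The calibration $\delta/2$ to obtain the constant $4/(\delta e)$ and the integrability discussion at the end are both handled correctly, so nothing needs to be changed.
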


The previous result will be used to estimate $M \, | \log (M) |$.
\begin{lemma}
Let $(M,C)$ be the solution of system \eqref{Pbcouple} with \eqref{CI}, then we have \\ $\sup_{t \in \mathbb{R}_+} \int_{0}^{+ \infty } M(t,a) \, | \log (M(t,a)) | da < \infty$.
\end{lemma}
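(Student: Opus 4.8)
The plan is to control the entropy functional $E(t):=\int_0^{+\infty} M(t,a)\,\log(M(t,a))\,da$ and then deduce the bound on $\int_0^{+\infty} M\,|\log M|\,da$ from the polarisation inequalities of Lemma \ref{polarisation}. First I would differentiate $E$ in time: multiplying the transport equation in \eqref{Pbcouple} by $1+\log(M(t,a))$ and integrating in $a$, using $(1+\log M)\,\partial_a M=\partial_a(M\log M)$, an integration by parts in the advection term yields, in the constant-coefficient setting of Theorem \ref{asymptotic_V_constant},
\[
E'(t)= C(t)\,\mathcal{V}(0)\,M(t,0)\,\log(M(t,0)) -\mu\int_0^{+\infty} M(t,a)\,da-\mu\,E(t),
\]
the boundary contribution at $a=0$ coming from the flux $C\,\mathcal V\,M$ and the $\mu$-terms producing $-\mu\,E(t)$ and $-\mu\int M$ (for non-constant $\mathcal V$ extra terms appear, but they are linear in $M$ and controlled by $\int M$ and $\int aM$).

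Two facts make this identity useful. The boundary term at $a=+\infty$ vanishes because $\mathcal{V}(a)^2 M(t,a)\to 0$ (Lemma \ref{lem:VcarreM}), whence $\mathcal V(a)\,M(t,a)\,\log(M(t,a))\to 0$; and the surviving boundary term is bounded uniformly in $t$. Indeed $M(t,0)=\alpha\,C(t)$ with $C$ positive and bounded by $K_{C_0,k}$ (Proposition \ref{proposition:C}), and $x\mapsto x\log x$ is bounded on the compact $[0,\alpha K_{C_0,k}]$ (it extends continuously by $0$ at $x=0$), so $C\,\mathcal V(0)\,M(t,0)\,\log(M(t,0))\le K$ for some constant $K>0$. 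Since moreover $-\mu\int M\le 0$, I obtain the differential inequality $E'(t)\le -\mu\,E(t)+K$.

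It then remains to integrate and convert this inequality. By Gronwall's lemma, using $E(0)=\int_0^{+\infty} M_0\log M_0\,da<\infty$ from assumption \eqref{hyp:R_2}, one gets $E(t)\le \max\big(E(0),K/\mu\big)$, so $\sup_{t\ge0}E(t)<\infty$. To guarantee that $E(t)$ is genuinely finite and not $-\infty$, I would bound its negative part by the second inequality of Lemma \ref{polarisation}, namely $-\int \tilde M\log\tilde M\,da\le \delta\int a\,M\,da+\frac1{\delta e}$ with $\tilde M=M\,1_{M\le1}$, the right-hand side being finite by Lemma \ref{lem:weakL1}. Finally, since $\int_0^{+\infty}(1+a)M(t,a)\,da<\infty$ uniformly in $t$ (Lemma \ref{lem:weakL1}) and $\int M\log M<\infty$, the first inequality of Lemma \ref{polarisation} with $\delta=1$ gives $\int_0^{+\infty} M|\log M|\,da\le E(t)+\int_0^{+\infty} a\,M(t,a)\,da+\frac{4}{e}$, and taking the supremum over $t\ge0$ yields the claim.

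The delicate point is the rigour of the entropy estimate rather than its algebra: differentiating $E$ under the integral sign, justifying the integration by parts, and discarding the boundary term at infinity all require enough decay and regularity of $M$, which is not granted a priori for the singular integrand $M\log M$. I would make this rigorous either by truncating the integral to $a\in(0,A)$, using the explicit representation \eqref{explicitM} together with Lemma \ref{lem:VcarreM} to let $A\to+\infty$, or by regularising $\log$ exactly as the function $I_\delta$ was used earlier in Lemma \ref{W1}, deriving the inequality for the regularised entropy and passing to the limit. Checking that the $x\log x$ boundary term stays bounded as $M(t,0)$ approaches $0$ is immediate since $x\log x\to0$, so this is not itself an obstacle.
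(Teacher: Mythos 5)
Your proposal follows essentially the same route as the paper: differentiate the entropy $\int M\log M$, kill the boundary term at infinity via Lemma \ref{lem:VcarreM}, bound the $a=0$ flux term using $M(t,0)=f(C(t))$ with $C$ bounded, close a Gronwall inequality, and convert to $\int M|\log M|$ with Lemma \ref{polarisation}. The only difference is cosmetic: the paper carries out the non-constant $\mu$ case explicitly by splitting off $\tilde M=M\,1_{M\le 1}$ and applying the second polarisation inequality there, whereas you write the identity for constant coefficients and invoke the same inequality only to control the negative part of $E(t)$; both reduce to the same estimates.
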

\begin{proof}

For all $t \geq 0$, we see that
\begin{equation}\label{dtNagumo}
\dfrac{d}{dt} \int_{0}^{+ \infty } M(t,a) \, \log (M(t,a)) \, da = \int_{0}^{+ \infty } \partial_t M(t,a) \, \log (M(t,a)) \, da + \int_{0}^{+ \infty } \partial_t M(t,a) \, da.
\end{equation}
Since
%& = & \left| \mathcal{V}(a) M(t,a)  \, \log(\mathcal{V}(a) M(t,a)) - \mathcal{V}(a)^2 M(t,a)  \, \frac{\log(\mathcal{V}(a))}{\mathcal{V}(a)}\right| \\
$\left|\mathcal{V}(a) M(t,a)  \, \log(M(t,a))\right| 
 \leq  \left| \mathcal{V}(a) M(t,a)  \, \log(\mathcal{V}(a) M(t,a)) \right| + \mathcal{V}(a)^2 M(t,a)  \, \sup_{X \geq \inf \mathcal{V}} \left| \frac{\log(X)}{X}\right|$,
%we use the lemma \ref{lem:weakL1} to get $\sup_{t \geq 0} \int_0^{+\infty} \mathcal{V}(a) M(t,a) < \infty$ with $\mathcal{V}(a) \leq ||\mathcal{V}'||_{\infty} \, a + \mathcal{V}(0)$. 
we use the lemma \ref{lem:VcarreM} and $\lim_{a \rightarrow +\infty} \mathcal{V}(a) M(t,a) =0$ with lemma \ref{lem:weakL1} to get \\
$\lim_{a \rightarrow +\infty} \mathcal{V}(a) M(t,a)  \log(M(t,a)) =0.$
 
Using \eqref{Pbcouple}, we compute the first term in the right-hand side of the equality \eqref{dtNagumo}:
\begin{eqnarray*}
& &\int_{0}^{+ \infty } \partial_t M(t,a) \, \log (M(t,a)) \, da  +  \int_{0}^{+ \infty } \mu(a) \,  M(t,a) \, \log (M(t,a)) \, da \\
& = & C(t)\,  M(t,0) \, \mathcal{V}(0) \, \log(M(t,0)) + C(t) \, \int_{0}^{+ \infty } \mathcal{V}(a) \, \partial_a M(t,a) \, da.
\end{eqnarray*}

Let $\tilde{M}(t,a) $ be  defined by $\tilde{M}(t,a) :=  M(t,a)$,  if $ M(t,a) \leq 1$  and $\tilde{M}(t,a) :=  0$,  if $M(t,a) > 1$  for $(t,a) \in \mathbb{R}_+^2$. We first see that 
$$-\int \mu M(t,\cdot) \log (M(t,\cdot)) 
 \le   -  \inf \mu   \int M(t,\cdot) \log (M(t,\cdot)  )
 - (  \sup \mu  -  \inf \mu )\int   \, \tilde{M}(t,\cdot) \, \log (\tilde{M}(t,\cdot)).$$
% =  - \int\mu M(t,\cdot) \log (M(t,\cdot)) _+  - \int \mu  \, \tilde{M}(t,\cdot) \, \log (\tilde{M}(t,\cdot))   
 Then, using lemma \ref{polarisation}, we obtain that 
 $$-\int \mu M(t,\cdot) \log (M(t,\cdot))  \le   -  \inf \mu   \int M(t,\cdot) \log (M(t,\cdot))   
 - (  \sup \mu  -  \inf \mu )\left( \frac{1}{e} + \int a \, M(t,\cdot )  \right).$$
 
Coming back to \eqref{dtNagumo}, after integration by parts, we obtain that
\begin{eqnarray*}
& & \frac{d}{dt} \int M(t,\cdot) \, \log (M(t,\cdot))  +   \inf \mu \, \int M(t,\cdot) \, \log (M(t,\cdot)) 
\leq  C(t) \, M(t,0) \, \mathcal{V}(0) \, \log(M(t,0)) \\
& - &  \int (C(t) \, \mathcal{V}'(\cdot) + \mu(\cdot)) M(t,\cdot)  +  \left( \sup \mu - \inf \mu \right) \, \left( \frac{1}{e} + \int a \, M(t,\cdot)  \right).
\end{eqnarray*}
%%%%%%%%%%%%%%%%%%%%%%%%%%%%%%%%%%%%%%%%%%%%%%%%%%%%%%%%%%%%%%%%%%%%%%%%%%%%%%%%%%%%%%%%%%%%%%%%%%%%%%%%%%%%
%%% CALCUL INTERMEDIAIRE %%%
%%%%%%%%%%%%%%%%%%%%%%%%%%%%%%%%%%%%%%%%%%%%%%%%%%%%%%%%%%%%%%%%%%%%%%%%%%%%%%%%%%%%%%%%%%%%%%%%%%%%%%%%%%%%
%\leq  C(t) \, M(t,0) \, \mathcal{V}(0) \, \log(M(t,0)) + \\ C(t) \, \int\mathcal{V}(\cdot) \, \partial_a M(t,\cdot)   +  \left( \sup \mu - \inf \mu \right) \, \left( \frac{1}{e} + \int a \, M(t,\cdot ) \right) -\int\left(C(t) \, \partial_a (\mathcal{V}(\cdot) M(t,\cdot)) + \mu(\cdot) \, M(t,\cdot) \right) 
%%%%%%%%%%%%%%%%%%%%%%%%%%%%%%%%%%%%%%%%%%%%%%%%%%%%%%%%%%%%%%%%%%%%%%%%%%%%%%%%%%%%%%%%%%%%%%%%%%%%%%%%%%%%
%%%%%%%%%%%%%%%%%%%%%%%%%%%%%%%%%%%%%%%%%%%%%%%%%%%%%%%%%%%%%%%%%%%%%%%%%%%%%%%%%%%%%%%%%%%%%%%%%%%%%%%%%%%%

Using lemma \ref{lem:weakL1}, we see that the last term of the latter inequality is globally bounded. Gronwall lemma, then yields that 
$$\sup_{t \in \mathbb{R}_+} \int_{0}^{+ \infty } M(t,a) \, \log (M(t,a)) da < \infty\, .$$

We achieve the proof of the proposition by using lemma  \ref{polarisation}.
\end{proof}

Consequently the hypothesis of Dunford-Pettis theorem are fulfilled, thus we conclude that $(M(t,.))_t$ is weakly compact in $L^1 (\mathbb{R}_+,\mathbb{R}_+)$. This ends the proof of proposition \ref{th:weakcompact}. 
\end{proof}

%%%%%%%%%%%%%%%%%%%%%%%%%%%%%%%%%%%%%%%%%%%%%%%%%%%%%%%%%%%%%%%%%%%%%%%%%%%%%%%%%%%%%%%%%%%%%%%%%%%%%%%%%%%%

\paragraph{Sufficient conditions for the strong convergence in $L^1$}
\noindent
 
If we assume that $C$ converges and that $M(t,0)$ depends only on $C$, then the convergence almost everywhere holds true. In such a case, the strong convergence in $L^1$ follows from the previous result, proposition  \ref{th:weakcompact}.

\begin{lemma}\label{thmain:L1convergence}
Let $(M,C)$ be the solution of the system \eqref{Pbcouple} with \eqref{CI}. Assuming in addition that $M(t,0) = f(C(t))$ and that $\lim_{t \rightarrow + \infty} C(t) = \tilde{C} $ with $\tilde{C}>0$ if $\mathcal{V}$ is constant and $\tilde{C} \in \left] 0, \frac{\inf_{x \geq 0} \mu (x)}{||\mathcal{V}' ||_{\infty}} \right[$ if $||\mathcal{V}' ||_{\infty}>0$, then  the following strong convergence holds true:
$$ M(t,a) \to  \frac{\mathcal{V}(0)}{\mathcal{V}(a)} \, f \left(\tilde{C}\right) \, e^{-\frac{1}{\tilde{C}} \, \int_{0}^{a} \frac{\mu (u)}{\mathcal{V}(u)} du}\quad \textrm{ in } L^1(\mathbb{R}_+,\mathbb{R}_+)\textrm{ when }t \rightarrow + \infty\, .$$
%Under assumptions \ref{hyp:M} or \ref{hyp:Mbis}, \ref{hyp:R} and \ref{hyp:add}, let $M$ be the solution of system \eqref{Pbcouple} with $M(t,0) = f(C(t))$ and \eqref{CI}. If we assume in addition that $\left(C(t)\right)_t$ converges and that $\lim_{t \rightarrow + \infty} C(t) = \tilde{C} $ with $\tilde{C}>0$ if $\mathcal{V}$ constant and $\tilde{C} \in \left] 0, \frac{\inf_{x \geq 0} \mu (x)}{||\mathcal{V}' ||_{\infty}} \right[$ if $||\mathcal{V}' ||_{\infty}>0$, then  the following strong convergence holds true:
%$$ M(t,a) \to  \frac{\mathcal{V}(0)}{\mathcal{V}(a)} \, f \left(\tilde{C}\right) \, e^{-\frac{1}{\tilde{C}} \, \int_{0}^{a} \frac{\mu (u)}{\mathcal{V}(u)} du}\quad \textrm{ in } L^1(\mathbb{R}_+,\mathbb{R}_+)\textrm{ when }t \rightarrow + \infty\, .$$
\end{lemma}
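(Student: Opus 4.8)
The plan is to recognise the claimed limit as the stationary profile \eqref{eq:steadyM} evaluated at $C^*=\tilde C$ with $M^*(0)=f(\tilde C)$; writing $M^\infty(a):=\frac{\mathcal V(0)}{\mathcal V(a)}\,f(\tilde C)\,e^{-\frac1{\tilde C}\int_0^a\frac{\mu(u)}{\mathcal V(u)}\,du}$, the goal becomes $M(t,\cdot)\to M^\infty$ in $L^1$. I would combine three ingredients: the weak $L^1$ compactness of $(M(t,\cdot))_{t\ge0}$ already provided by Proposition \ref{th:weakcompact} (equivalently, uniform integrability via the criteria \eqref{critere:concentration}--\eqref{critere:fuite}), a pointwise almost everywhere convergence $M(t,a)\to M^\infty(a)$, and an Egorov-type argument upgrading a.e. convergence plus uniform integrability to strong convergence.

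First I would establish the pointwise convergence from the explicit representation \eqref{explicitM}. Since $C(t)\to\tilde C>0$, the bijection $h(t)=\int_0^t C(s)\,ds$ tends to $+\infty$, so for every fixed $a$ and all $t$ large one has $\Theta(a)\le h(t)$ and only the second branch of \eqref{explicitM} is active, namely $M(t,a)=\frac{\mathcal V(0)}{\mathcal V(a)}f(C(\tau_0))\,e^{-\int_{\tau_0}^t\mu(\mathbf a_{(t,a)}(s))\,ds}$ with $\tau_0=h^{-1}(h(t)-\Theta(a))$. As $t\to+\infty$ one has $\tau_0\to+\infty$, hence $f(C(\tau_0))\to f(\tilde C)$ by continuity of $f$. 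For the damping factor I would change variables $v=\Theta(\mathbf a_{(t,a)}(s))=h(s)-h(\tau_0)$, turning the exponent into $\int_0^{\Theta(a)}\mu(\Theta^{-1}(v))/C(h^{-1}(h(\tau_0)+v))\,dv$; since the running time $h^{-1}(h(\tau_0)+v)\ge\tau_0\to+\infty$, the denominator converges to $\tilde C$ for each $v\in[0,\Theta(a)]$, and dominated convergence (using $\mu\in L^\infty$ from \eqref{hyp:M} and $C\ge\tilde C/2$ for large times) yields the limit $\frac1{\tilde C}\int_0^{\Theta(a)}\mu(\Theta^{-1}(v))\,dv=\frac1{\tilde C}\int_0^a\frac{\mu(u)}{\mathcal V(u)}\,du$. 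Collecting these gives $M(t,a)\to M^\infty(a)$ for a.e. $a$.

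Before assembling strong convergence I would verify $M^\infty\in L^1(\mathbb R_+)$, which is exactly where the restriction on $\tilde C$ enters. From the Lipschitz bound $\mathcal V(a)\le||\mathcal V'||_\infty a+\mathcal V(0)$ one obtains $\int_0^a\frac{\mu(u)}{\mathcal V(u)}\,du\ge\frac{\inf\mu}{||\mathcal V'||_\infty}\log\!\big(1+\tfrac{||\mathcal V'||_\infty}{\mathcal V(0)}a\big)$, so up to the bounded factor $\mathcal V(0)/\mathcal V(a)$ the profile decays like $(1+ca)^{-\inf\mu/(\tilde C||\mathcal V'||_\infty)}$; the hypothesis $\tilde C<\inf\mu/||\mathcal V'||_\infty$ forces this exponent above $1$, giving integrability, while if $\mathcal V$ is constant then $||\mathcal V'||_\infty=0$, the profile decays exponentially and $\tilde C>0$ suffices.

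Finally I would pass to strong convergence. Fixing any sequence $t_n\to+\infty$, we have $M(t_n,\cdot)\to M^\infty$ a.e. Given $\epsilon>0$, the tail criterion \eqref{critere:fuite} together with $M^\infty\in L^1$ provides $A$ with $\sup_n\int_A^{+\infty}M(t_n,a)\,da+\int_A^{+\infty}M^\infty(a)\,da\le\epsilon$. On the bounded interval $[0,A]$, Egorov's theorem furnishes a set $E\subset[0,A]$ of arbitrarily small measure off which the convergence is uniform; the concentration criterion \eqref{critere:concentration} (uniformly in $n$) together with absolute continuity of $\int M^\infty$ controls $\int_E\big(M(t_n,\cdot)+M^\infty\big)$, while uniform convergence controls $\int_{[0,A]\setminus E}|M(t_n,\cdot)-M^\infty|$. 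Hence $\limsup_n||M(t_n,\cdot)-M^\infty||_1\lesssim\epsilon$, and since $\epsilon$ and the sequence are arbitrary, $M(t,\cdot)\to M^\infty$ in $L^1$ as $t\to+\infty$. I expect the main obstacle to be the characteristic analysis of the second paragraph — correctly identifying the limiting damping exponent by passing to the limit along $\tau_0\to+\infty$ — together with pinning down the integrability of $M^\infty$, which is precisely what the bound $\tilde C<\inf\mu/||\mathcal V'||_\infty$ secures.
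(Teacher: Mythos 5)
Your proposal is correct and follows essentially the same route as the paper: the explicit characteristic representation \eqref{explicitM} with the change of variables along $\Theta$ yields the a.e.\ limit $\frac{\mathcal{V}(0)}{\mathcal{V}(a)}f(\tilde C)e^{-\frac{1}{\tilde C}\int_0^a\frac{\mu(u)}{\mathcal{V}(u)}du}$, and strong $L^1$ convergence is then obtained from the weak compactness of Proposition \ref{th:weakcompact} via the Egorov/Dunford--Pettis argument of Lemma \ref{lem:Egorov}. Your additional verification that the limit profile lies in $L^1$ precisely when $\tilde C<\inf\mu/\|\mathcal{V}'\|_\infty$ (or $\tilde C>0$ for constant $\mathcal{V}$) is a detail the paper leaves implicit, and it correctly explains where that hypothesis is used.
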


\begin{proof}

We first use the explicit expression \eqref{explicitM} of $M$ by substituting $v=\mathbf{a}_{(t,a)} (s)$ to obtain that $\frac{dv}{\mathcal{V}(v)}= C(s) \, ds$ with $s=h^{-1}\left(h(t)+\Theta(v)-\Theta(a)\right)$. Consequently, for $\Theta(a) \leq h(t)$, we have the expression 
\begin{equation*}
M(t,a) = \frac{\mathcal{V}(0)}{\mathcal{V}(a)} \, f \left(C(h^{-1}\left(h(t)-\Theta(a)\right) \right) \, e^{- \int_{0}^{a} \mu(v) \, \frac{1}{C \left(h^{-1}\left(h(t)+\Theta(v)-\Theta(a)\right) \right)} \frac{dv}{\mathcal{V}(v)}}.
\end{equation*}
Since $\lim_{t \rightarrow + \infty} h^{-1}\left(h(t)+\Theta(v)-\Theta(a)\right)= + \infty$ for $v \in \mathbb{R}_+$, we get the convergence almost everywhere of $M$ to a function $\tilde{M} \in L^1 (\mathbb{R}_+,\mathbb{R}_+)$ defined for $a \in \mathbb{R}_+$ by
\begin{eqnarray}\label{eq:cvaeM}
\lim_{t \rightarrow + \infty} M(t,a) & = & \frac{\mathcal{V}(0)}{\mathcal{V}(a)} \, f \left(\tilde{C}\right) \, e^{-\frac{1}{\tilde{C}} \, \int_{0}^{a} \frac{\mu (u)}{\mathcal{V}(u)} du}  = \tilde{M}(a).
\end{eqnarray}
Once we obtain the convergence almost everywhere, we achieve the proof by using the following lemma based on the Egorov's theorem, see \cite{Evans}, that we recall now.

\begin{theorem}\emph{\bf [Egorov]}\label{th:Egorov}
Let $(f_t)_{t \geq 0}$ be a sequence of functions from $\mathbb{R}_+$ to $\mathbb{R}_+$ converging almost everywhere to $f$ on a compact $[a_1,a_2] \subset \mathbb{R}_+$.  For all $\eta >0$, there exists $\mathcal{A} \subset [a_1,a_2]$ such that $mes(\mathcal{A}) < \eta$, and $(f_t)_{t \geq 0}$ converges to $f$ uniformly on $[a_1,a_2] \setminus \mathcal{A}$.
\end{theorem}

\begin{lemma}\label{lem:Egorov}
Let $(f_t)_{t \geq 0}$ be a weakly compact sequence in $L^1 (\mathbb{R}_+,\mathbb{R}_+)$ such that $(f_t)_{t \geq 0}$ converges almost everywhere to $f$ on $\mathbb{R}_+$. Then $(f_t)_{t \geq 0}$ strongly converges to $f$ in $L^1 (\mathbb{R}_+,\mathbb{R}_+)$.
\end{lemma}

\begin{proof}
For all $R > 0$, $\eta > 0$, the Egorov's theorem \ref{th:Egorov} ensures us the uniform convergence of $(f_t)_{t \geq 0}$ to $f$ on $[0,R] \setminus \mathcal{A}$ for a certain $\mathcal{A} \subset \mathbb{R}_+$ with $mes(\mathcal{A}) < \eta$. We write for $t \geq 0$
\begin{eqnarray}\label{eq:Egorovlem}
|| f_t - f ||_1 & = & \int_{[0,R] \setminus \mathcal{A}} | f_t(a) - f(a) | da + \int_{\mathcal{A}} | f_t(a) - f(a) | da + \int_{R}^{+\infty} | f_t(a) - f(a) | da \nonumber \\
& \leq & R \, || f_t - f ||_{L^{\infty} ([0,R] \setminus \mathcal{A})} + \sup_{t \in \mathbb{R}_+} \int_{\mathcal{A}} f_t(a) da + \int_{\mathcal{A}} f(a) da \nonumber \\
& + & \sup_{t \in \mathbb{R}_+} \int_{R}^{+ \infty} f_t(a) da + \int_{R}^{+ \infty} f(a) da .
\end{eqnarray}
Since $(f_t)_{t \geq 0}$ is weakly compact in $L^1 (\mathbb{R}_+,\mathbb{R}_+)$, we use the Dunford-Pettis theorem \ref{th:Dunford} to get the uniform integrability.
For $\epsilon > 0$, $\exists \, \eta >0$ such that $mes(\mathcal{A}) < \eta$ and $\frac{1}{\eta}< R < \frac{2}{\eta}$ then
$\sup_{t \in \mathbb{R}_+} \int_{\mathcal{A}} f_t(a) da \leq \frac{\epsilon}{5} $ and $  \sup_{t \in \mathbb{R}_+} \int_{R}^{+ \infty} f_t(a) da \leq \frac{\epsilon}{5}$,
and we recall that $f \in L^1 (\mathbb{R}_+,\mathbb{R}_+)$, we also have 
$\int_{\mathcal{A}} f(a) da \leq \frac{\epsilon}{5}$ and $ \int_{R}^{+ \infty} f(a) da \leq \frac{\epsilon}{5}$.

Since $\frac{1}{\eta}< R < \frac{2}{\eta}$, $\exists \, \zeta >0$ such that for all $t > \frac{1}{\zeta}$ then
$R \, || f_t - f ||_{L^{\infty} ([0,R]\setminus \mathcal{A})} \leq \frac{\epsilon}{5}$ and the $L^1$ convergence then follows from \eqref{eq:Egorovlem}.
\end{proof}

From theorem \ref{th:weakcompact} together with \eqref{eq:cvaeM} which ensures thatf lemma \ref{lem:Egorov} can be applied, we deduce that 
\begin{equation*}
M(t,a) \to  \frac{\mathcal{V}(0)}{\mathcal{V}(a)} \, f \left(\tilde{C}\right) \, e^{-\frac{1}{\tilde{C}} \, \int_{0}^{a} \frac{\mu (u)}{\mathcal{V}(u)} du} \quad \mbox{ strongly in } L^1(\mathbb{R}_+,\mathbb{R}_+)\, , \textrm{ as } t\to \infty\, .
\end{equation*}
This ends the proof of lemma \ref{thmain:L1convergence}.
\end{proof}

%When $M(t,0)=f(C(t))$ and in some particular cases of $\mathcal{V}$, we can prove the convergence of $C$, as we describe it now.

%%%%%%%%%%%%%%%%%%%%%%%%%%%%%%%%%%%%%%%%%%%%%%%%%%%%%%%%%%%%%%%%%%%%%%%%%%%%%%%%%%%%%%%%%%%%%%%%%%%%%%%%%%%%

\paragraph{Case $M(t,0)=\alpha C(t)$ and $\mathcal{V}(a) = \mathcal{V}$} %: proof of theorem \ref{asymptotic_V_constant}}
\noindent

Here we prove theorem \ref{asymptotic_V_constant}.
Denoting $\mathcal{M}(t):= \int_0^{+ \infty} \, M(t,a) \, da$,  after integration of system \eqref{Pbcouple} we get the following ODE system for $t \geq 0$
\begin{equation}\label{eq:EDOVconstant}
\left\{
\begin{array}{l}
\mathcal{M} '(t) = \alpha \, \mathcal{V} \, C(t) ^2 - \mu \, \mathcal{M}(t), \\
C'(t) = \gamma - \beta \, C (t) - \mathcal{V} \, C(t) \, \mathcal{M}(t).
\end{array}
\right.
\end{equation}
We have a Lyapunov function by handling the two non linear terms.

\begin{proposition}\label{propmain:Lyapunov}
Let $(\mathcal{M}^*,C^*)$ be the unique stationary state of \eqref{eq:EDOVconstant} in $\mathbb{R}_+^2$, then
$(\mathcal{M}^*,C^*)$ is asymptotically stable. Moreover, there exists a Lyapunov function $L_c$ in the neighbourhood of $(\mathcal{M}^*,C^*)$:
\begin{equation*}
L_c(\mathcal{M},C) := (\mathcal{M}-\mathcal{M}^*)^2 + 2 \, \alpha \, (C - C^*)^2.
\end{equation*}
\end{proposition}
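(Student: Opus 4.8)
The plan is to verify directly that $L_c$ is a strict Lyapunov function for \eqref{eq:EDOVconstant} in a neighbourhood of $(\mathcal{M}^*,C^*)$ and then invoke the classical Lyapunov stability theorem. First I would record the equilibrium relations obtained by setting the right-hand sides of \eqref{eq:EDOVconstant} to zero, namely $\alpha\,\mathcal{V}\,(C^*)^2 = \mu\,\mathcal{M}^*$ and $\gamma = \beta\,C^* + \mathcal{V}\,C^*\,\mathcal{M}^*$. Substituting the first into the second shows that $C^*$ is the unique positive root of the strictly increasing map $C \mapsto \beta C + (\alpha\mathcal{V}^2/\mu)\,C^3$ on $\mathbb{R}_+$, set equal to $\gamma>0$; this both confirms uniqueness of the stationary state in $\mathbb{R}_+^2$ and provides the two identities that will be used to eliminate $\gamma$ and $\alpha\mathcal{V}(C^*)^2$ in the computation.

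Next, writing $u := \mathcal{M} - \mathcal{M}^*$ and $v := C - C^*$, I would differentiate $L_c$ along trajectories, $\frac{d}{dt}L_c = 2u\,\mathcal{M}' + 4\alpha\,v\,C'$. Using the equilibrium relations, the two evolution laws rewrite as $\mathcal{M}' = \alpha\mathcal{V}\,v\,(2C^*+v) - \mu u$ and $C' = -(\beta + \mathcal{V}\mathcal{M}^*)v - \mathcal{V}C^*u - \mathcal{V}uv$. The decisive point, and the reason the coefficient $2\alpha$ appears in $L_c$, is that the two quadratic cross terms $4\alpha\mathcal{V}C^*\,uv$ (arising from $2u\mathcal{M}'$) and $-4\alpha\mathcal{V}C^*\,uv$ (arising from $4\alpha v C'$) cancel exactly, so that
\[
\frac{d}{dt}L_c = -2\mu\,u^2 - 4\alpha(\beta + \mathcal{V}\mathcal{M}^*)\,v^2 - 2\alpha\mathcal{V}\,u\,v^2 .
\]

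Finally I would argue that this expression is negative definite near the equilibrium. Its quadratic part $-2\mu u^2 - 4\alpha(\beta + \mathcal{V}\mathcal{M}^*)v^2$ is strictly negative definite since $\mu,\alpha,\beta,\mathcal{V},\mathcal{M}^*>0$, while the remaining term $-2\alpha\mathcal{V}u v^2$ is cubic, hence of higher order. Concretely, regrouping the $v^2$ terms gives $\frac{d}{dt}L_c = -2\mu u^2 - 2\alpha\bigl(2(\beta+\mathcal{V}\mathcal{M}^*) + \mathcal{V}u\bigr)v^2$, whose bracket stays positive as soon as $u > -2(\beta+\mathcal{V}\mathcal{M}^*)/\mathcal{V}$; thus on any ball contained in $\{|u| < 2(\beta+\mathcal{V}\mathcal{M}^*)/\mathcal{V}\}$ one has $\frac{d}{dt}L_c < 0$ for $(u,v)\neq(0,0)$. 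Since $L_c$ is smooth, positive definite and vanishes only at $(\mathcal{M}^*,C^*)$, Lyapunov's theorem then yields asymptotic stability. The only subtlety to watch is precisely this cubic remainder: it cannot be dominated globally, but because the statement asserts a Lyapunov function only \emph{in a neighbourhood}, restricting to a small ball where the dominant quadratic form controls the cubic correction is exactly what is needed, and that domination is the single substantive step of the proof.
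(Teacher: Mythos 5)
Your computation is correct and follows the same route as the paper: the same function $L_c$, the same observation that the factor $2\alpha$ is chosen precisely so that the two cross terms $\pm 4\alpha\mathcal{V}C^*\,uv$ cancel, and the same resulting identity $\frac{d}{dt}L_c=-2\mu u^2-4\alpha(\beta+\mathcal{V}\mathcal{M}^*)v^2-2\alpha\mathcal{V}uv^2$. The one place where you diverge is the treatment of the cubic remainder. You dominate it locally, restricting to a ball where $|u|<2(\beta+\mathcal{V}\mathcal{M}^*)/\mathcal{V}$; the paper instead regroups the last two terms as $-4\alpha\beta v^2-2\alpha\mathcal{V}(\mathcal{M}+\mathcal{M}^*)v^2$ and uses the nonnegativity of $\mathcal{M}(t)$ (Proposition \ref{proposition:rayon}) together with $\mathcal{M}^*>0$ to conclude that $\frac{d}{dt}L_c<0$ along \emph{every} trajectory in the physical quadrant, not just near the equilibrium. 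Note that your own bracket condition $u>-2(\beta+\mathcal{V}\mathcal{M}^*)/\mathcal{V}$ is in fact automatically satisfied whenever $\mathcal{M}\geq 0$, since it reads $\mathcal{M}>-\mathcal{M}^*-2\beta/\mathcal{V}$; so you could have obtained the global statement for free, and that stronger version is what the paper actually needs afterwards to deduce the convergence of $C(t)$ to $C^*$ and of $\|M(t,\cdot)\|_1$ to $\|M^*\|_1$ in the proof of Theorem \ref{asymptotic_V_constant}. Your direct uniqueness argument for the equilibrium, via the strictly increasing map $C\mapsto\beta C+(\alpha\mathcal{V}^2/\mu)C^3$, is a clean self-contained alternative to the paper's appeal to Lemma \ref{prop:uniquesteady}.
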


\begin{proof}
According to lemma \ref{prop:uniquesteady}, there is a unique solution $(M^*,C^*) \in \mathbb{R}_+^{* 2}$ of \eqref{steady}. Let $\mathcal{M}^*$ be defined by $\mathcal{M}^*:= \int_0^\infty M^*(a)da$. We immediately see that $(\mathcal{M}^*, C^*)$ is the unique solution in $\mathbb{R}_+^{* 2}$ for the stationary problem associated to \eqref{eq:EDOVconstant}.
We prove the existence of the following Lyapunov function $L_{c}$ in the neighbourhood of $(\mathcal{M}^*,C^*)$. For $t \geq 0$, let $L_{c}(\mathcal{M}(t),C(t)) = (\mathcal{M}(t)-\mathcal{M}^*)^2 + 2 \, \alpha \, (C(t) - C^*)^2$. 
Since $R(0)>0$, we have $\mathcal{M}^*>0$ and $C^*>0$ and, for $t \geq 0$, we first compute 
\begin{eqnarray*}
& &\frac{1}{2} \, \frac{d}{dt} \left(\mathcal{M}(t) - \mathcal{M}^* \right)^2 =  \left(\alpha \, \mathcal{V} \, C(t)^2 - \mu \, \mathcal{M}(t) \right) \, \left(\mathcal{M}(t) - \mathcal{M}^* \right) \\
& = &  \left(\alpha \, \mathcal{V} \, C(t)^2 - \mu \, \mathcal{M}(t)- \left(\alpha \, \mathcal{V} \, C^{* 2} - \mu \, \mathcal{M}^*\right) \right) \, \left(\mathcal{M}(t) - \mathcal{M}^* \right) \\
& = &  \alpha \, \mathcal{V} \, \left(\left(C(t) - C^* \right)^2 + 2 \, C(t) \, C^* - 2 \, C^{* 2} \right) \, \left(\mathcal{M}(t) - \mathcal{M}^* \right)  -  \mu \, \left( \mathcal{M}(t) - \mathcal{M}^* \right)^2 \\
& = &  \alpha \, \mathcal{V} \, \left(C(t) - C^* \right)^2 \, \left(\mathcal{M}(t) - \mathcal{M}^* \right)  +  2 \, \alpha \,  \mathcal{V} \, \left(\mathcal{M}(t) - \mathcal{M}^* \right) \,  \left(C(t) - C^* \right) C^*  -  \mu \, \left( \mathcal{M}(t) - \mathcal{M}^* \right)^2. 
\end{eqnarray*}
On the other hand, we have
\begin{eqnarray*}
\frac{1}{2} \frac{d}{dt} \left(C(t) - C^* \right)^2 & = & \left(\gamma - \beta \, C(t) -  \mathcal{V}  \, C(t) \, \mathcal{M}(t) \right) \, \left(C(t) - C^* \right) \\
& = & \left(\gamma - \beta \, C(t) -  \mathcal{V} \, C(t)  \, \mathcal{M}(t) - \left(\gamma - \beta \, C^* -  \mathcal{V} \, C^* \, \mathcal{M}^*  \right) \right) \, \left(C(t) - C^* \right) \\
& = & - \beta \, (C(t) - C^*)^2 -  \mathcal{V} \, \left(C(t) \, \mathcal{M}(t) - C^* \, \mathcal{M}^* \right) \, (C(t) - C^*) \\
& = & - \beta (C(t) - C^*)^2 -  \mathcal{V}  (C(t) - C^*)^2  \mathcal{M}(t) -  \mathcal{V}  (C(t) - C^*)  C^*   (\mathcal{M}(t) - \mathcal{M}^*) .
\end{eqnarray*}
The Lyapunov function is built by cancelling the term with an unknown sign, $\mathcal{V} \,  (\mathcal{M}(t) - \mathcal{M}^*) \, (C(t) - C^*) \, C^*$, and we see that:
\begin{eqnarray*}
\frac{1}{2} \frac{d}{dt} L_{c}(\mathcal{M}(t),C(t)) 
& = &  \alpha \, \mathcal{V} \, \left(C(t) - C^*\right)^2  \, \left( \mathcal{M}(t) - \mathcal{M}^*\right) 
- \mu \, \left(\mathcal{M}(t) - \mathcal{M}^*\right)^2 \\
&  & - 2 \, \alpha \, \beta \, \left(C(t) - C^*\right)^2 
-  2 \, \alpha \, \mathcal{V} \, \left(C(t) - C^*\right)^2 \,  \mathcal{M}(t)  \\ 
& = & - \mu \, \left(\mathcal{M}(t) - \mathcal{M}^*\right)^2 - 2 \, \alpha \, \beta \, \left(C(t) - C^*\right)^2  -  \alpha \, \mathcal{V} \, \left(C(t) - C^*\right)^2 \, (\mathcal{M}(t) + \mathcal{M}^*).
\end{eqnarray*}
We use proposition \ref{proposition:rayon} to get that $\mathcal{M}(t) \geq 0$, for all $t \geq 0$. Moreover, if $(\mathcal{M}(t),C(t)) \neq (\mathcal{M}^*,C^*)$, since $\mathcal{M}^*>0$, we deduce that $\frac{d}{dt} \Big[L_{c}(\mathcal{M}(t),C(t)) \Big]<0$, hence $L_{c}$ is strictly decreasing on trajectories.  Consequently, we deduce the two convergences, $C(t)$ towards $C^*$ and $|| M(t,.) ||_1$ towards $|| M^* ||_1$. This achieves the proof of proposition \ref{propmain:Lyapunov}. 
\end{proof}

Since we know that $C$ converges to $C^*$, the proof of theorem \ref{asymptotic_V_constant} is achieved by using lemma \ref{thmain:L1convergence}.

\begin{remark}
We obtain the rate of convergence, $2 \, \min(\mu,\beta)$, of the Lyapunov function:
$$\frac{d}{dt} \Big[L_{c}(\mathcal{M}(t),C(t)) \Big]< - 2 \, \min(\mu,\beta) \, L_{c}(\mathcal{M}(t),C(t)).$$
\end{remark}

\section{Simplified high inflammatory model}
\noindent

Here we will consider the following model ($\beta= 1$):
\begin{equation}\label{Pbcouple_simplified}
\left\{
\begin{array}{l}
\partial_t M(t, a) +  C (t) \, \partial_a (\mathcal{V}(a) \, M(t, a)) + \mu (a) \, M(t, a) = 0\, , \quad t \ge 0\, ,a> 0\, , \\
M(t,0) =\sigma _m \, C(t) \left( 1+\int_{0}^{+ \infty }\mathcal{V}(a)\, M(t,a) \, da \right)\, , \quad t \ge 0\, ,\\
\end{array}
\right.
\end{equation}
where 
\begin{equation}\label{eq_C_simplified}
C(t) = \frac{\gamma }{1 + \int_{0}^{+ \infty } \mathcal{V}(a) \, M(t,a) \, da } \, ,\quad t \ge 0\, , 
\end{equation}
completed with an initial condition 
\begin{equation}\label{CI_simplified}
M(0,.)=M_0 \in W^{1,1}(\mathbb{R}_+,\mathbb{R}_+)\, .
\end{equation}
This can be rewritten as
\begin{equation}\label{Pbcouple_simplified_2}
\left\{
\begin{array}{l}
\partial_t M(t, a) +   \frac{\gamma }{1 + \int_{0}^{+ \infty } \mathcal{V}(a) \, M(t,a) \, da }  \, \partial_a (\mathcal{V}(a) \, M(t, a)) + \mu (a) \, M(t, a) = 0\, , \\
M(t,0) =\gamma \sigma _m \, .\\
\end{array}
\right.
\end{equation}

The stationary state associated with \eqref{Pbcouple_simplified_2} satisfies
\begin{equation}\label{eq:steadyM_simplified}
M^*(a) = \gamma \sigma _m \, \frac{\mathcal{V}(0)}{\mathcal{V}(a)} \, e^{- \frac{1 + \int_{0}^{+ \infty } \mathcal{V}(a) \, M^*(a) \, da}{\gamma}  \int_{0}^{a} \frac{\mu(a')}{\mathcal{V}(a')} da'}\, .
\end{equation}

Assuming that a stationary state exists, we aim at providing a bifurcation in its behavior. Tipically if $\gamma \sigma _m  <\delta_c$ the plaque is not vulnerable while it is vulnerable if $\gamma \sigma _m >\delta_c$. For us a plaque will become vulnerable if the stationary state is not decreasing in $a$. If it is decreasing for $a<A$ and then increasing for $a>A$ then this will mean there is a necrotic core and the plaque will be called vulnerable (thin wall).

Let us now compute the derivative with respect to $a$ of $M^*$ given by \eqref{eq:steadyM_simplified}:
\begin{equation}\label{eq:steadyM_simplified_derivative}
M^{*'}(a) = -\frac{\delta \, \mathcal{V}(0)}{\mathcal{V}^2(a)} \, e^{- \frac{1 + \int_{0}^{+ \infty } \mathcal{V}(a) \, M^*(a) \, da}{\gamma}  \int_{0}^{a} \frac{\mu(a')}{\mathcal{V}(a')} da'}\left( \mathcal{V}'(a)+ \frac{1 + \int_{0}^{+ \infty } \mathcal{V}(a) \, M^*(a) \, da}{\gamma}\mu (a)\right)\, .
\end{equation}
The monotony of $M^*$ depends on the sign of the last right-hand side term in the previous expression.

Assuming that $\mathcal{V}(a) = \left(1+\delta \, a \right) \, e^{-\lambda a}$ (to be justified), we have
\begin{equation}\label{eq:der_de_V}
\mathcal{V}'(a) = \left(\delta-\lambda-\lambda \, \delta \, a \right) \, e^{-\lambda a}\, ,
\end{equation}
$M^{*'}(a)$ is negative if and only if
\begin{equation}\label{eq:der_de_V_2}
 \mathcal{V}'(a)+ \frac{1 + \int_{0}^{+ \infty } \mathcal{V}(x) \, M^*(x) \, dx}{\gamma}\mu(a) \geq 0\, .
\end{equation}
We note
\begin{eqnarray*}
\Delta=\int_{0}^{+ \infty } \mathcal{V}(x) \, M^*(x) \, dx & =& \gamma \sigma _m \,  \int_{0}^{+ \infty } e^{- (1 + \Delta) \, \frac{\mu}{\gamma}  \int_{0}^{x} \frac{e^{\lambda a'}}{1+\delta \, a'} da'}\, \, dx \\
\end{eqnarray*}

\subsection{Sufficient condition for a healthy plaque ($\mu$ constant)}
\noindent
The plaque is healthy ($M^{*'}$ remains negative on $\mathbb{R}^+$) if and only if
\begin{equation}
\forall a \in \mathbb{R}^+, \quad \mathcal{V}'(a)+ \frac{1 + \Delta}{\gamma} \, \mu \geq 0\, .
\end{equation}
Since $\int_{0}^{x} \frac{e^{\lambda a'}}{1+\delta \, a'} da' \leq \frac{e^{\lambda x}-1}{\lambda}$, then
\begin{eqnarray*}
\Delta \geq  \gamma \sigma _m \,  \int_{0}^{+ \infty } e^{- (1 + \Delta) \, \frac{\mu}{\gamma}  \frac{e^{\lambda x}-1}{\lambda}}\, \, dx = \gamma \sigma _m \,  \int_{0}^{+ \infty } \frac{e^{- (1 + \Delta) \, \frac{\mu}{\gamma} u}}{1+\lambda \, u}\, \, du
\end{eqnarray*}
Since $u \mapsto \frac{1}{1+\lambda \, u}$ is convex on $\mathbb{R}_+$, we have with Jensen inequality
$$(1 + \Delta) \, \frac{\mu}{\gamma} \,  \int_{0}^{+ \infty } \frac{1}{1+\lambda \, u}\, e^{- (1 + \Delta) \, \frac{\mu}{\gamma} u} \, \, du \geq \frac{1}{1+\lambda \, (1 + \Delta) \, \frac{\mu}{\gamma} \, \int_{0}^{+ \infty }  u \, e^{- (1 + \Delta) \, \frac{\mu}{\gamma} u} \, \, du} = \frac{1}{1+\frac{\gamma \lambda}{(1 + \Delta) \, \mu}}$$ 
We can conclude $$\Delta \geq \frac{\gamma^2 \sigma_m}{(1 + \Delta) \, \mu + \lambda \gamma}.$$ 
Since $\mu \, \Delta^2 +(\mu + \lambda \gamma) \, \Delta  - \gamma^2 \sigma _m \geq 0$, then 
$$\Delta \geq \frac{-(\mu + \lambda \gamma)+\sqrt{(\mu + \lambda \gamma)^2+4 \mu \gamma^2 \sigma_m}}{2 \mu } = \frac{\mu + \lambda \gamma}{2 \mu } \left(\sqrt{1+\frac{4 \mu \gamma^2 \sigma_m}{(\mu + \lambda \gamma)^2}}-1\right).$$
Moreover, since $\sqrt{1+x}-1 \geq \frac{x}{2+\sqrt{x}}$ for $x \geq 0$, then
$$\Delta \geq \frac{\gamma^2 \sigma_m}{\mu + \lambda \gamma+\gamma\sqrt{\mu \sigma_m}}.$$
$M^{*'}$ remains negative on $\mathbb{R}^+$ if 
\begin{equation}
\forall a \in \mathbb{R}^+, \quad  \mathcal{V}'(a)+ \frac{\mu}{\gamma}+\frac{\mu \gamma \sigma_m}{\mu + \lambda \gamma+\gamma\sqrt{\mu \sigma_m}}\geq 0\,.
\end{equation}
We note
$$f(a)=\mathcal{V}'(a)+ \frac{\mu}{\gamma}+\frac{\mu \gamma \sigma_m}{\mu + \lambda \gamma+\gamma\sqrt{\mu \sigma_m}},$$ 
we have $f'(a)=\left(-2 \delta+\lambda+\lambda \, \delta \, a \right) \, \lambda \, e^{-\lambda a}$. With
$a^*=\frac{2}{\lambda}-\frac{1}{\delta}$, $f(a^*)$ is the minimum of $f$ on $\mathbb{R}^+$ if $2 \, \delta \geq \lambda$. 
So, 
\begin{itemize}
\item
if $2 \, \delta \geq \lambda$, since $e^{-2+\frac{\lambda}{\delta}} \leq 1$, we have
$$f(a^*) \geq - \delta +\frac{\mu}{\gamma}+\frac{\mu \gamma \sigma_m}{\mu + \lambda \gamma+\gamma\sqrt{\mu \sigma_m}}\geq 0....$$
%We conclude the plaque is healthy if $2 \, \delta \geq \lambda$ and $\frac{\mu}{\gamma} \geq  \delta \, e^{-2+\frac{\lambda}{\delta}}$.
\item if $2 \, \delta < \lambda$, $f$ is strictly increasing, $$f(0)=\delta-\lambda+\frac{\mu}{\gamma}+\frac{\mu \gamma \sigma_m}{\mu + \lambda \gamma+\gamma\sqrt{\mu \sigma_m}}\geq 0....$$ 
\end{itemize}
%We can sum up
%\begin{proposition}
%If ....., then the plaque is healthy.
%\end{proposition}

\subsection{Sufficient condition for a vulnerable plaque ($\mu$ constant)}
\noindent

The plaque is vulnerable ($\exists a^* \in \mathbb{R}^+, \, M^{*'}(a^*) \geq 0$) if and only if
\begin{equation}
\exists a^* \in \mathbb{R}^+, \quad \mathcal{V}'(a^*)+ \frac{1 + \Delta}{\gamma}\mu \leq 0\, .
\end{equation}
We have
\begin{eqnarray*}
\Delta=\int_{0}^{+ \infty } \mathcal{V}(x) \, M^*(x) \, dx & =& \gamma \sigma _m \,  \int_{0}^{+ \infty } e^{- (1 + \Delta) \, \frac{\mu}{\gamma}  \int_{0}^{x} \frac{e^{\lambda a'}}{1+\delta \, a'} da'}\, \, dx.
\end{eqnarray*}
Since $u \mapsto e^{\lambda u} $ is convex, we have with Jensen inequality
$$\frac{\delta}{\ln(1+\delta \, x)} \, \int_{0}^{x} \frac{e^{\lambda a'}}{1+\delta \, a'} da' \geq e^{\frac{\lambda \delta}{\ln(1+\delta \, x)} \int_{0}^{x} \frac{a'}{1+\delta \, a'} da'} = e^{\frac{\lambda \, x}{\ln(1+\delta \, x)} -\frac{\lambda}{\delta}} $$
so,
\begin{eqnarray*}
\Delta \leq  \gamma \sigma _m \,  \int_{0}^{+ \infty } e^{- (1 + \Delta) \, \frac{\mu}{\delta \gamma} \ln(1+\delta \, x) \, e^{\frac{\lambda \, x}{\ln(1+\delta \, x)} -\frac{\lambda}{\delta}}}\, \, dx 
\end{eqnarray*}
with $u=\ln(1+\delta \, x)$,
\begin{eqnarray*}
\Delta \leq  \frac{\gamma \sigma _m}{\delta} \,  \int_{0}^{+ \infty } e^{- (1 + \Delta) \, \frac{\mu}{\delta \gamma} u \, e^{\frac{\lambda}{u \delta} \left(e^u-1-u\right)}} \, e^u \, \, du
\end{eqnarray*}
Moreover, since $e^u-1 -u \geq \frac{u^2}{2}$ for $u \geq 0$, then
\begin{eqnarray*}
\Delta \leq  \frac{\gamma \sigma _m}{\delta} \,  \int_{0}^{+ \infty } e^{- (1 + \Delta) \, \frac{\mu}{\delta \gamma} u \, e^{\frac{\lambda \, u}{2 \delta}}} \, e^u \, \, du,
\end{eqnarray*}
with $t=e^{\frac{\lambda \, u}{2 \delta}}$,
\begin{eqnarray*}
\Delta \leq  \frac{2 \gamma \sigma _m}{\lambda} \,  \int_{1}^{+ \infty } e^{ \frac{2\, \ln(t)}{\lambda} \left(\delta - \frac{\lambda}{2} - (1 + \Delta) \, \frac{\mu}{\gamma} t \right)} \, \, du,
\end{eqnarray*}
%\textbf{Que faire pour majorer correctement $\Delta$ ? }

\noindent
The plaque is vulnerable if
\begin{equation}
\exists a^* \in \mathbb{R}^+, \quad \mathcal{V}'(a^*)+ \frac{\mu }{\gamma}  \leq 0\, .
\end{equation}

%\textbf{Version pr\'ec\'edente:}

With $f(a)=\mathcal{V}'(a)+ \frac{\mu}{\gamma} +  \frac{\sigma _m \,\gamma}{\min \left(1,\frac{\lambda}{\delta}\right)}$, we have $f'(a)=\left(-2 \delta+\lambda+\lambda \, \delta \, a \right) \, \lambda \, e^{-\lambda a}$. With
$a^*=\frac{2}{\lambda}-\frac{1}{\delta}$, $f(a^*)$ is the minimum of $f$ on $\mathbb{R}^+$ if $2 \, \delta \geq \lambda$. So, if $2 \, \delta \geq \lambda$
$$f(a^*)=- \delta \, e^{-\lambda a^*}+\frac{\mu}{\gamma}+  \frac{\sigma _m \,\gamma}{\min \left(1,\frac{\lambda}{\delta}\right)} \leq 0 \quad \Leftrightarrow \quad \frac{\mu}{\gamma}+ \frac{\sigma _m \,\gamma}{\min \left(1,\frac{\lambda}{\delta}\right)} \leq  \delta \, e^{-2+\frac{\lambda}{\delta}}$$
We conclude the plaque is vulnerable if $2 \, \delta \geq \lambda$ and $\frac{\mu}{\gamma}+ \frac{\sigma _m \,\gamma}{\min \left(1,\frac{\lambda}{\delta}\right)} \leq  \delta \, e^{-2+\frac{\lambda}{\delta}}$.

If $2 \, \delta < \lambda$, $f$ is strictly increasing. If $f(0)=\delta-\lambda+\frac{\mu}{\gamma}< 0$, then $\delta+\frac{\mu}{\gamma}< \lambda$. We can sum up

\begin{proposition}
If $2 \, \delta \geq \lambda$ and $\frac{\mu}{\gamma}+ \frac{\sigma _m \,\gamma}{\min \left(1,\frac{\lambda}{\delta}\right)} \leq  \delta \, e^{-2+\frac{\lambda}{\delta}}$ or $2 \, \delta < \lambda$ and $ \delta+\frac{\mu}{\gamma} < \lambda$, then the plaque is vulnerable.
\end{proposition}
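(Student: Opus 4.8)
The plan is to turn the geometric notion of vulnerability into a single pointwise sign inequality and then, after bounding the self-consistent quantity $\Delta$ from above, reduce that inequality to an elementary one-variable minimization. First I would record, from \eqref{eq:steadyM_simplified_derivative}, that the prefactor multiplying $\bigl(\mathcal{V}'(a)+\frac{1+\Delta}{\gamma}\mu\bigr)$ is strictly negative, so that $M^{*\prime}(a^*)\geq 0$ for some $a^*\in\mathbb{R}_+$ if and only if $\mathcal{V}'(a^*)+\frac{1+\Delta}{\gamma}\mu\leq 0$; it therefore suffices to exhibit one such $a^*$. The obstruction is that $\Delta=\int_0^{+\infty}\mathcal{V}(x)M^*(x)\,dx$ is only defined implicitly, through the fixed-point relation
$$\Delta=\gamma\sigma_m\int_0^{+\infty}\exp\!\Bigl(-(1+\Delta)\tfrac{\mu}{\gamma}\textstyle\int_0^x \frac{e^{\lambda a'}}{1+\delta a'}\,da'\Bigr)\,dx .$$

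Because $\Delta$ enters only through the increasing quantity $\frac{1+\Delta}{\gamma}\mu$, replacing it by a computable \emph{upper} bound makes the vulnerability inequality more demanding, hence still sufficient; producing that upper bound is the step I expect to be the main obstacle. I would lower-bound the inner integral by applying Jensen's inequality to the convex map $u\mapsto e^{\lambda u}$ against the probability measure $\frac{\delta}{\ln(1+\delta x)}\frac{da'}{1+\delta a'}$ on $[0,x]$, obtaining $\int_0^x \frac{e^{\lambda a'}}{1+\delta a'}\,da'\geq \frac{\ln(1+\delta x)}{\delta}\,e^{\frac{\lambda x}{\ln(1+\delta x)}-\frac{\lambda}{\delta}}$. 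Substituting this into the fixed-point relation and then changing variables via $u=\ln(1+\delta x)$ and $t=e^{\lambda u/(2\delta)}$, together with the elementary inequality $e^u-1-u\geq \frac{u^2}{2}$ exactly as in the displayed chain above, yields an estimate of the form $\frac{\Delta\,\mu}{\gamma}\leq \frac{\sigma_m\gamma}{\min(1,\lambda/\delta)}$.

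With this bound in hand, vulnerability is guaranteed as soon as $f(a^*)\leq 0$ for some $a^*$, where $f(a):=\mathcal{V}'(a)+\frac{\mu}{\gamma}+\frac{\sigma_m\gamma}{\min(1,\lambda/\delta)}$. Using $\mathcal{V}'(a)=(\delta-\lambda-\lambda\delta a)e^{-\lambda a}$ from \eqref{eq:der_de_V}, I would compute $f'(a)=\lambda(-2\delta+\lambda+\lambda\delta a)e^{-\lambda a}$, whose unique zero is $a^*=\frac{2}{\lambda}-\frac{1}{\delta}$. If $2\delta\geq\lambda$, then $a^*\geq 0$ is an interior minimizer and, since $\mathcal{V}'(a^*)=-\delta\,e^{-2+\lambda/\delta}$, the requirement $f(a^*)\leq 0$ is precisely $\frac{\mu}{\gamma}+\frac{\sigma_m\gamma}{\min(1,\lambda/\delta)}\leq \delta\,e^{-2+\lambda/\delta}$. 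If $2\delta<\lambda$, then $a^*<0$, so $f$ increases on $\mathbb{R}_+$ and attains its minimum at $f(0)=\delta-\lambda+\frac{\mu}{\gamma}+\frac{\sigma_m\gamma}{\min(1,\lambda/\delta)}$. In either case the point so produced satisfies $\mathcal{V}'(a^*)+\frac{1+\Delta}{\gamma}\mu\leq 0$, whence $M^{*\prime}(a^*)\geq 0$ and the plaque is vulnerable.

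The points I would be most careful about are the orientation of the $\Delta$-bound (an upper bound is needed precisely because $\frac{1+\Delta}{\gamma}\mu$ grows with $\Delta$), the convergence of the integrals appearing in the Jensen estimate (which requires $(1+\Delta)\frac{\mu}{\delta\gamma}$ to be large enough), and the treatment of the $\sigma_m$-term in the case $2\delta<\lambda$: the exact condition there is $f(0)\leq 0$, and reducing it to the stated $\delta+\frac{\mu}{\gamma}<\lambda$ implicitly discards the positive contribution $\frac{\sigma_m\gamma}{\min(1,\lambda/\delta)}$, so this reduction should either be justified (e.g.\ by a smallness assumption on $\sigma_m$) or the stronger inequality $\delta+\frac{\mu}{\gamma}+\frac{\sigma_m\gamma}{\min(1,\lambda/\delta)}\leq\lambda$ should be retained.
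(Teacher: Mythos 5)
Your proposal follows essentially the same route as the paper: reduce vulnerability to the sign condition $\mathcal{V}'(a^*)+\frac{1+\Delta}{\gamma}\mu\leq 0$, bound $\Delta$ from above via the fixed-point relation, and then minimize $f(a)=\mathcal{V}'(a)+\frac{\mu}{\gamma}+\frac{\sigma_m\gamma}{\min(1,\lambda/\delta)}$ over $\mathbb{R}_+$, splitting on whether the critical point $a^*=\frac{2}{\lambda}-\frac{1}{\delta}$ is nonnegative. Two remarks are worth making. First, your caution about the case $2\delta<\lambda$ is justified: the paper itself evaluates $f(0)$ as $\delta-\lambda+\frac{\mu}{\gamma}$, silently dropping the positive term $\frac{\sigma_m\gamma}{\min(1,\lambda/\delta)}$ from its own definition of $f$, so the stated condition $\delta+\frac{\mu}{\gamma}<\lambda$ is not what the argument actually delivers; the honest sufficient condition in that case is $\delta+\frac{\mu}{\gamma}+\frac{\sigma_m\gamma}{\min(1,\lambda/\delta)}\leq\lambda$, exactly as you propose to retain. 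Second, the one genuine gap you share with the paper is the bound $\frac{\Delta\mu}{\gamma}\leq\frac{\sigma_m\gamma}{\min(1,\lambda/\delta)}$: the paper's Jensen chain terminates in an unevaluated integral and never produces this estimate, and you assert it only ``of the form''. It is nevertheless true and can be obtained far more simply: since $e^{\lambda a}\geq 1+\lambda a$ one has $\frac{e^{\lambda a}}{1+\delta a}\geq\min\left(1,\frac{\lambda}{\delta}\right)$ for all $a\geq 0$, so the inner integral is at least $\min(1,\lambda/\delta)\,x$ and the fixed-point relation gives $\Delta\leq\gamma\sigma_m\int_0^{\infty}e^{-(1+\Delta)\frac{\mu}{\gamma}\min(1,\lambda/\delta)\,x}\,dx\leq\frac{\gamma^2\sigma_m}{\mu\,\min(1,\lambda/\delta)}$, which is exactly what the minimization step requires. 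Replacing the Jensen computation by this elementary estimate would close the argument.
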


\bibliographystyle{unsrt}
\bibliography{Athero}

%%%%%%%%%%%%%%%%%%%%%%%%%%%%%%%%%%%%%%%%%%%%%%%%%%%%%%%%%%%%%%%%%%%%%%%%%%%%%%%%%%%%%%%%%%%%%%%%%%%%%%%%%%%
%%%%%%%%%%%%%%%%%%%%%%%%%%%%%%%%%%%%%%%%%%%%%%%%%%%%%%%%%%%%%%%%%%%%%%%%%%%%%%%%%%%%%%%%%%%%%%%%%%%%%%%%%%%
%%%%%%%%%%%%%%%%%%%%%%%%%%%%%%%%%%%%%%%%%%%%%%%%%%%%%%%%%%%%%%%%%%%%%%%%%%%%%%%%%%%%%%%%%%%%%%%%%%%%%%%%%%%

%\end{spacing}
\end{document}